\newtheorem{Cor}[subsubsection]{Corollary}
\newtheorem{Lm}[subsubsection]{Lemma}
\newtheorem{Pp}[subsubsection]{Proposition}
\newtheorem{Con}[subsubsection]{Conjecture}
\newtheorem{Thm}[subsubsection]{Theorem}
\newtheorem{Def}[subsubsection]{Definition}
\newtheorem{Rem}[subsubsection]{Remark}
\theoremstyle{definition}
\theoremstyle{remark}
\newcommand{\nc}{\newcommand}
\nc{\renc}{\renewcommand}
\nc{\ssec}{\subsection}
\nc{\sssec}{\subsubsection}
\nc{\on}{\operatorname}
\nc\ol{\overline}
\nc\wt{\widetilde}
\nc\tboxtimes{\wt{\boxtimes}}
\newcommand{\cA}{{\mathcal A}}
\newcommand{\cB}{{\mathcal B}}
\newcommand{\cC}{{\mathcal C}}
\newcommand{\cD}{{\mathcal D}}
\newcommand{\cH}{{\mathcal H}}
\newcommand{\cE}{{\mathcal E}}
\newcommand{\cI}{{\mathcal I}}
\newcommand{\cO}{{\mathcal O}}
\newcommand{\cL}{{\mathcal L}}
\newcommand{\cM}{{\mathcal M}}
\newcommand{\cF}{{\mathcal F}}
\newcommand{\cK}{{\mathcal K}}
\newcommand{\cP}{{\mathcal P}}
\newcommand{\cS}{{\mathcal S}}
\newcommand{\cV}{{\mathcal V}}
\newcommand{\cW}{{\mathcal W}}
\newcommand{\cY}{{\mathcal Y}}
\newcommand{\cZ}{{\mathcal Z}}
\renewcommand{\AA}{{\mathbb A}}
\newcommand{\GG}{{\mathbb G}}
\newcommand{\ZZ}{{\mathbb Z}}
\newcommand{\PP}{{\mathbb P}}
\newcommand{\HH}{{\mathbb H}}
\nc{\gi}{\mathfrak{i}}
\nc{\ga}{\mathfrak{a}}
\newcommand{\gm}{\mathfrak{m}}    
\newcommand{\gp}{\mathfrak{p}}
\newcommand{\gq}{\mathfrak{q}}
\newcommand{\gr}{\mathfrak{r}}
\nc{\gM}{\mathfrak{M}}
\nc{\gV}{\mathfrak{V}}
\nc{\gE}{\mathfrak{E}}
\nc{\bA}{\mathbf{A}}
\nc{\bC}{\mathbf{C}}
\nc{\uZ}{\underline{\cZ}}
\nc{\MAPS}{{\mathcal Maps}}
\newcommand{\Rep}{{\on{Rep}}}
\newcommand{\Qlb}{\mathbb{\bar Q}_\ell}
\newcommand{\Gm}{\mathbb{G}_m}
\newcommand{\Ql}{\mathbb{Q}_\ell}
\newcommand{\toup}[1]{\stackrel{#1}{\to}}
\newcommand{\hook}[1]{\stackrel{#1}{\hookrightarrow}}
\newcommand{\getsup}[1]{\stackrel{#1}{\gets}}
\newcommand{\Sp}{\on{\mathbb{S}p}}
\newcommand{\Spin}{\on{\mathbb{S}pin}}
\newcommand{\GSpin}{\on{G\mathbb{S}pin}}
\newcommand{\GSp}{\on{G\mathbb{S}p}}
\newcommand{\IC}{\on{IC}}
\newcommand{\Hom}{\on{Hom}}
\newcommand{\End}{\on{End}}
\newcommand{\Sym}{\on{Sym}}
\newcommand{\SO}{\on{S\mathbb{O}}}
\newcommand{\GSO}{\on{GS\mathbb{O}}}
\newcommand{\Ker}{\on{Ker}}
\newcommand{\Aut}{\on{Aut}}
\newcommand{\RG}{\on{R\Gamma}}
\newcommand{\Bun}{\on{Bun}}
\newcommand{\Bunt}{\on{\widetilde\Bun}}
\newcommand{\Spec}{\on{Spec}}
\newcommand{\supp}{\on{supp}}
\newcommand{\Gr}{\on{Gr}}
\newcommand{\Aff}{\on{Aff}}
\newcommand{\GL}{\on{GL}}
\newcommand{\pr}{\on{pr}}
\newcommand{\id}{\on{id}}
\newcommand{\QED}{$\square$} 
\newcommand{\iso}{{\widetilde\to}}
\newcommand{\comp}{\circ}
\renewcommand{\H}{{\on{H}}}   
\newcommand{\D}{\on{D}}       
\newcommand{\ov}[1]{\overline{#1}}
\newcommand{\select}[1]{{\it{#1}}}
\newcommand{\und}[1]{\underline{#1}}
\newcommand{\<}{\langle}
\newcommand{\Loc}{\on{Loc}}
\newcommand{\Sph}{\on{Sph}}
\newcommand{\Res}{\on{Res}}
\newcommand{\SL}{\on{SL}}
\newcommand{\Vect}{\on{Vect}}
\newcommand{\Ind}{\on{Ind}}
\newcommand{\LocSys}{\on{LocSys}}
\newcommand{\ra}{\rightarrow}
\newcommand{\la}{\leftarrow}
\nc{\Perv}{\on{Perv}}
\nc{\Gra}{\on{Gra}}
\nc{\PPerv}{\on{{\PP}erv}}
\nc{\oX}{\overset{\scriptscriptstyle\circ}{X}}
\nc{\ocL}{\overset{\scriptscriptstyle\circ}{\cL}}
\nc{\gRes}{\on{gRes}}
\nc{\Sign}{\on{Sign}}
\nc{\goodat}{\rm{good\, at}}
\nc{\Whit}{\on{Whit}}
\nc{\add}{\on{add}}
\nc{\FS}{\on{FS}}
\nc{\oo}[1]{\overset{\scriptscriptstyle\circ}{#1}}
\nc{\can}{\on{can}}
\nc{\summ}{\on{sum}}
\nc{\SiSu}{\on{SS}}
\nc{\Irr}{\on{Irr}}
\nc{\Hecke}{\on{Hecke}}
\nc{\oHecke}{\overset{\scriptstyle\bullet}{\Hecke}}
\nc{\og}[1]{\overset{\scriptscriptstyle\bullet}{#1}}
\nc{\of}{\overset{\scriptstyle\bullet}{f}}
\nc{\Exp}{\on{{\mathcal E}xp}}
\nc{\Chain}{\on{Chain}}
\nc{\Map}{\on{Map}}
\nc{\cSet}{\on{{\mathcal S}et}}
\nc{\Cat}{\on{\mathcal{C}at}}
\nc{\bfitDelta}{\bm{\mathit{\Delta}}}
\nc{\Grpd}{\on{Grpd}}
\nc{\Kan}{\on{{\mathcal K}an}}
\nc{\Spc}{\on{Spc}}
\nc{\Yon}{\on{Yon}}
\nc{\colim}{\on{colim}}
\nc{\Fin}{\on{{\mathcal F}in}}
\nc{\Alg}{\on{Alg}}
\nc{\Triv}{\on{\mathcal Triv}}
\nc{\Grp}{\on{{\mathcal G}rp}}
\nc{\EM}{\on{{\mathcal EM}}}
\nc{\Surj}{\on{Surj}}
\nc{\Ass}{\on{{\mathcal A}ss}}
\nc{\Sptr}{\on{Sptr}}
\nc{\cPr}{\on{{\cP}r}}
\nc{\Grd}{\on{Grd}}
\nc{\CAT}{\on{\bf 1-Cat}}
\nc{\DGCat}{\on{DGCat}}
\nc{\Act}{\on{Act}}
\nc{\Env}{\on{Env}}
\nc{\Quad}{\on{Quad}}
\nc{\ComGrp}{\on{ComGrp}}
\nc{\PreStk}{\on{PreStk}}
\nc{\Stk}{\on{Stk}}
\nc{\NearStk}{\on{NearStk}}
\nc{\Tot}{\on{Tot}}
\nc{\Ptd}{\on{Ptd}}
\nc{\Mon}{\on{Mon}}
\nc{\Idem}{\on{Idem}}
\nc{\ind}{\on{ind}}
\nc{\BMod}{\on{BMod}}
\nc{\Tens}{\on{Tens}}
\nc{\Step}{\on{Step}}
\nc{\MAP}{\on{\bf Map}}
\nc{\Seq}{\on{Seq}}
\nc{\boneCat}{\on{\mathbf{1-Cat}}}
\nc{\DG}{\on{DG}}
\nc{\WC}{\on{WC}}
\nc{\QCoh}{\on{QCoh}}
\nc{\Nilp}{\on{Nilp}}
\newcommand*{\doublerightarrow}[2]{\mathrel{
  \settowidth{\@tempdima}{$\scriptstyle#1$}
  \settowidth{\@tempdimb}{$\scriptstyle#2$}
  \ifdim\@tempdimb>\@tempdima \@tempdima=\@tempdimb\fi
  \mathop{\vcenter{
    \offinterlineskip\ialign{\hbox to\dimexpr\@tempdima+1em{##}\cr
    \rightarrowfill\cr\noalign{\kern.5ex}
    \rightarrowfill\cr}}}\limits^{\!#1}_{\!#2}}}
\newcommand*{\triplerightarrow}[1]{\mathrel{
  \settowidth{\@tempdima}{$\scriptstyle#1$}
  \mathop{\vcenter{
    \offinterlineskip\ialign{\hbox to\dimexpr\@tempdima+1em{##}\cr
    \rightarrowfill\cr\noalign{\kern.5ex}
    \rightarrowfill\cr\noalign{\kern.5ex}
    \rightarrowfill\cr}}}\limits^{\!#1}}}
\begin{document}

\title{On the automorphic sheaves for $\GSp_4$}
\author{S. Lysenko}
\address{Institut Elie Cartan Lorraine, Universit\'e de Lorraine, 
 B.P. 239, F-54506 Vandoeuvre-l\`es-Nancy Cedex, France}
\email{Sergey.Lysenko@univ-lorraine.fr}
\begin{abstract}
In this paper we first review the setting for the geometric Langlands functoriality and establish a result for the `backward' functoriality functor. We illustrate this by known examples of the geometric theta-lifting. We then apply the above result to obtain new Hecke eigen-sheaves. The most important application is a construction of the automorphic sheaf for $G=\GSp_4$ attached to
a $\check{G}$-local system on a curve $X$ such that its standard representation is an irreducible local system of rank 4 on $X$. \end{abstract}
\maketitle

\section{Introduction}

\ssec{} Let $X$ be a smooth projective curve over an algebraically closed field $k$ of characteristic $p\ge 0$. The purpose of this paper is twofold: 
\begin{itemize}
\item[i)] We formulate a setting for the geometric Langlands functoriality (at the nonramified level) and prove an (easy) result on the `backward' functoriality functor. We illustrate our setting with known examples of the geometric theta-lifting and Eisenstein series.
\item[ii)] We apply this result combined with previously established results on the geometric theta-lifting to obtain new examples of automorphic sheaves. The most important application is a construction of new automorphic sheaves for $\GSp_4$. 
\end{itemize}

\sssec{} Let $G,H$ be split connected reductive groups over $k$, write $\check{G}, \check{H}$ for the Langlands dual groups over $k$. Write $\Bun_G$ for the stack of $G$-torsors on $X$, $\D(\Bun_G)$ for the derived category of $\Qlb$-sheaves on $\Bun_G$. Given a homomorphism $\bar\kappa: \check{G}\times\SL_2\to\check{H}$, let $\kappa: \check{G}\times\Gm\to \check{H}$ be its restriction to the standard maximal torus of $\SL_2$. According to the Langlands philosophy, one may ask about the corresponding geometric Langlands functoriality functor $F_H: \D(\Bun_G)\to \D(\Bun_H)$ commuting with the actions of $\Rep(\check{H})$. It is understood that $\Rep(\check{H})$ acts on $\D(\Bun_G)$ via its restriction through $\kappa$. The functor $F_H$ is expected to be given by some kernel $\cM$ on $\Bun_G\times\Bun_H$. Here $\SL_2$ is the \select{$\SL_2$ of Arthur}. We discuss the corresponding setting in Section~\ref{Sect_functoriality}. 

 Consider the `backward' functor $F_G: \D(\Bun_H)\to\D(\Bun_G)$ given by the same kernel $\cM$. Our first result is Theorem~\ref{Thm_Hecke_propety_of_cM_implies} describing the natural relation between $F_G$ and the Hecke functors on $\D(\Bun_H)$ and $\D(\Bun_G)$. This naturally leads to a notion of a partial Hecke property of $K\in\D(\Bun_G)$ with respect to a given $\check{H}$-local system $E_{\check{H}}$ on $X$ (cf. Definition~\ref{Def_1.1.2}). We illustrate our setting of the geometric Langlands functoriality with known cases of the geometric theta-lifting and geometric Eisenstein series in Sections~\ref{Section_Theta-lifting_2.2}-\ref{Sect_Liftings of Hecke eigen-sheaves}. 

\sssec{} In Section~\ref{Sect_Extending_Hecke_3.1} we obtain some results related to a possible extension of the partial Hecke property of $K\in\D(\Bun_G)$ to the whole Hecke property, provided that the corresponding local system $E_{\check{G}}$ is a $\check{G}$-local system. This question can be thought of in terms of the seminal recent paper \cite{G5}, where a spectral decomposition of the derived category $\D(\Bun_G)$ is established over the stack $\LocSys^{restr}_{\check{G}}$ of \select{restricted} $\check{G}$-local systems.  

 We establish Proposition~\ref{Pp_2.2.3}, which gives examples where a partial Hecke property can be extended to the whole Hecke property. Our proof of parts 3),4) of Proposition~\ref{Pp_2.2.3} uses (\cite{G5}, Theorem~10.5.2). It is used only to obtain some properties of our automorphic sheaves, not an additional structure. Our construction of the Hecke eigen-sheaves under consideration is algorithmic. 
 
\sssec{} We apply the above to construct new automorphic sheaves for $G=\GSp_4$. Namely, assume $k$ of characteristic $p>2$ (this is needed as we apply the results on the geometric theta-lifting). Let $E_{\check{G}}$ be a $\check{G}$-local system on $X$ such that its standard representation is an irreducible (rank 4) local system on $X$. 

 We construct an object $\cK$ of the derived category $\D(\Bun_G)$ of $\Qlb$-sheaves on $\Bun_G$, which is a $E_{\check{G}}$-Hecke eigen-sheaf. It is obtained via the geometric theta-lifting and confirms a conjecture proposed twelve years ago (\cite{L4}, Conjecture 6(ii)). The non-vanishing of $\cK$ is established in \cite{L5}. In the case when both the curve and $E_{\check{G}}$ are defined over a finite subfield $k_0\subset k$, we also give also an alternative argument showing that $\cK$ does not vanish. The complex $\cK$ is bounded from above over each open substack of $\Bun_G$ of finite type, and all its perverse cohomology sheaves are constructible.
 
  In Appendix~\ref{Appendix_B} we discuss some general properties of abelian categories over stacks related to the above question of the extending the partial Hecke property. 
  
\sssec{Organization} In Section~\ref{Sect_functoriality} we propose a setting for the geometric Langlands functoriality and prove Theorem~\ref{Thm_Hecke_propety_of_cM_implies}. We also illustrate our setting with known examples of the geometric theta-lifting and geometric Eisenstein series. In Section~\ref{Sect_Extending the Hecke property} we formulate our results that have not appeared in Section~\ref{Sect_functoriality}. They are related to the extension of a partial Hecke property and the construction of new automorphic sheaves. The proofs are collected in Section~\ref{Sect_Proofs}.

\ssec{Notation} 
\label{sect_Notation}
\sssec{} Work over an algebraically closed field $k$ of characteristic $p>2$. The  case $p=2$ is excluded as we are using the results of \cite{L1, L2, L3, LL3} on the the geometric Weil representation (they could possibly be extended to the $p=2$ case using \cite{GeL1}). All our stacks are defined over $k$. Let $\ell\ne p$ be a prime, $\Qlb$ the algebraic closure of $\Ql$. 
 
  We work with etale $\Qlb$-sheaves on algebraic stacks locally of finite type (cf. \cite{LO}). We ignore the Tate twists everywhere (they are easy to recover if necessary). 
Let $X$ be a smooth projective connected curve, $\Omega$ the canonical line bundle on $X$. 

 We use the following conventions from (\cite{L1}, Section~2.1). For an algebraic stack locally of finite type $S$ we denote by $\D(S)$ the derived category of unbounded $\Qlb$-complexes on $S$ with constructible cohomologies defined in (\cite{LO}, Remark~3.21) and denoted $\D_c(S,\Qlb)$ in \select{loc.cit.} For $*=+,-,b$ we have the corresponding full subcategory $\D^*(S)\subset \D(S)$ denoted $\D^*_c(S,\Qlb)$ in \select{loc.cit}. 
 
 Write $\D^-(S)_!\subset D^-(S)$ for the full subcategory of objects, which are extensions by zero from some open substack of finite type. Write $\D^{\prec}(S)\subset \D(S)$ for the full subcategory of complexes $K\in \D(S)$ such that for any open substack of finite type $U\subset S$, $K\mid_U\in \D^-(U)$. 
  
  For an algebraic stack locally of finite type $S$ we also consider the version $\D_{nc}(S)$ of $\D(S)$, where we do not assume that the cohomologies are constructible. It is defined as the homotopy category of the corresponding $\DG$-category of $\Qlb$-sheaves in \'etale topology on $S$, cf. \cite{GL, G4, G5, GR}. Then $\D(S)\subset\D_{nc}(S)$ is a full subcategory. For any morphism $f: S\to S'$ of algebraic stacks locally of finite type the direct image with compact support $f_!: \D_{nc}(S)\to \D_{nc}(S')$ is defined by passing to the homotopy for the corresponding functor between the $\DG$-categories  (\cite{G4}, Corollary~1.4.2). Similarly, we have the functors $f^!, f^*: \D_{nc}(S')\to \D_{nc}(S)$ and $f_*: \D_{nc}(S)\to \D_{ns}(S')$. 
  
  For an algebraic stack $S$ locally of finite type $\D_{nc}(S)$ is equipped with a perverse t-structure. The corresponding $\DG$-category of is left complete for this t-structure by (\cite{G5}, Th. 1.1.4), so by (\cite{HA}, 1.2.1.19)
\begin{itemize}
\item[(P)] If $K\in \D^{\prec}(S)$ is such that $\H^i(K)=0$ for all $i$ then $K=0$.
\end{itemize} 
We denoted by $\H^i(K)$ the $i$-cohomology sheaf of $K$ on $S$.

  In our proof of Proposition~\ref{Pp_2.2.3} we apply (\cite{G5}, Theorem~10.5.2), which requires the whole formalism of the $\DG$-categories of $\Qlb$-sheaves on prestacks from \cite{GL, G5}. However, we work mostly in the above setting of triangulated categories, not the $\DG$-setting of a theory of sheaves from \cite{G4,G5}. The reason is that the results of \cite{L1, L3} that we are using are for the moment established only on the level of triangulated categories.  
  
\sssec{} 
\label{Sect_def_Hecke_functors}
For a connected reductive group $G$ over $k$ write $\Bun_G$ for the stack of $G$-torsors on $X$. Denote by $\check{G}$ the Langlands dual group to $G$ over $\Qlb$. Denote by $\Rep(\check{G})$ the abelian category of finite-dimensional representations of $\check{G}$ over $\Qlb$. The trivial $G$-torsor on some base is denoted $\cF^0_G$. 

 For $x\in X$ let $\Gr_{G,x}$ denote the affine Grassmanian classifying $(\cF_G, \beta)$, where $\cF_G$ is a $G$-torsor on $X$, $\beta: \cF_G\,\iso\, \cF^0_G$ is a trivialization over $X-x$. Let $\cO_x$ be the completed local ring of $X$ at $x$, $D_x=\Spec\cO_x$. The spherical Hecke category $\Sph_G$ is defined as the category of $G(\cO_x)$-equivariant perverse sheaves on $\Gr_{G,x}$. The Satake equivalence provides an equivalence of symmetric monoidal categories $\Loc: \Rep(\check{G})\,\iso\, \Sph_G$.

 Write $\cH_G$ for the Hecke stack classifying $(x\in X, \cF_G\,\iso\,\cF'_G\mid_{X-x})$ with $\cF_G, \cF'_G\in\Bun_G$. It fits into the diagram
$$
\Bun_G\times X\getsup{h^{\la}_G\times\supp} \cH_G\toup{h^{\ra}_G}\Bun_G,
$$
where $h^{\la}_G$ (resp., $h^{\ra}_G$) sends the above point to $\cF_G$ (resp., $\cF'_G$). The map $\supp$ sends the above point to $x$. The Hecke functors 
\begin{equation}
\label{Hecke_functors_any_group}
\H^{\la}_G, \H^{\ra}_G: \Sph_G\times \D^{\prec}(S\times\Bun_G)\to \D^{\prec}(S\times\Bun_G\times X)
\end{equation}
are defined in (\cite{L1}, Section~2.1.1). Namely, for $x\in X$ write $_x\cH_G$ for the base change of $\cH_G$ by $\Spec k\toup{x} X$.
Write $\Bun_{G,x}$ for the stack classifying $\cF_G\in\Bun_G$ with a trivialization $\cF_G\,\iso\, \cF^0_G\mid_{D_x}$. Write $\id^l, \id^r$ for the isomorphisms
$$
_x\cH_G\,\iso\, \Bun_{G, x}\times^{G(\cO_x)}\Gr_{G,x}
$$
such that the projection on the first factor corresponds to $h^{\la}_G$, $h^{\ra}_G$ respectively. To $\cS\in\Sph_G$, $K\in\D^{\prec}(S\times\Bun_G)$ one attaches their twisted external products $(K\tboxtimes \cS)^l, (K\tboxtimes \cS)^r$ on $_x\cH_G$. They are normalized to be perverse for $K$ perverse. Then
$$
_x\H^{\la}_G, {_x\H^{\ra}_G}: \Sph_G\times \D^{\prec}(S\times\Bun_G)\to \D^{\prec}(S\times\Bun_G)
$$
are given by
$$
_x\H^{\la}_G(\cS, K)=(h^{\la}_G)_!(K\tboxtimes \ast\cS)^r\;\;\;\;\mbox{and}\;\;\;\;
{_x\H^{\ra}_G}(\cS, K)=(h^{\ra}_G)_!(K\tboxtimes \cS)^l
$$
We have denoted by $\ast: \Sph_G\,\iso\, \Sph_G$ the covariant equivalence induced by the map $G(F_x)\,\iso\,G(F_x), g\mapsto g^{-1}$. Letting $x$ vary in $X$, one similarly gets the functors (\ref{Hecke_functors_any_group}). 
Along the same line one also defines 
$$
\H^{\la}_G, \H^{\ra}_G: \Sph_G\times \D_{nc} (S\times\Bun_G)\to \D_{nc}(S\times\Bun_G\times X)
$$
In view of the Satake equivalence, we also view $\ast$ as a functor $\ast: \Rep(\check{G})\to\Rep(\check{G})$. It is induced by a Chevalley involution of $\check{G}$. 

\sssec{} For $\theta\in\pi_1(G)$ and $x\in X$ let $\Gr_G^{\theta}$ denote the connected component of $\Gr_G$ classifying $(\cF_G, \beta)$, where $\cF_G$ is a $G$-torsor on $X$, $\beta: \cF_G\,\iso\, \cF^0_G$ is a trivialization over $X-x$ such that $V_{\cF^0_G}\,\iso\, V_{\cF_G}(\<\theta, \check{\lambda}\>x)$ for a one dimensional $G$-module of  weight $\check{\lambda}$. Here $\pi_1(G)$ is the algebraic fundamental group of $G$ (the quotient of the coweights lattice by the coroots lattice). Write $\Bun_n$ for the stack of rank $n$ vector bundles on $X$. 
   
\sssec{} As in (\cite{L1}, Section~2.1.2), write $\Loc_X$ for the category of local systems on $X$ and set $\D\!\Loc_X=\oplus_{i\in\ZZ} \Loc_X[i]\subset \D(X)$. Then $\D\!\Loc_X$ as a symmetric monoidal category naturally. If $x\in X$ then a datum of a symmetric monoidal functor $E: \Rep(\check{G})\to \D\!\Loc_X$ is equivalent to a datum of a homomorphism $\sigma: \pi_1(X, x)\times\Gm\to \check{G}$ algebraic along $\Gm$ and continuous along $\pi_1(X,x)$. 

 As in (\cite{L1}, Section 2.1.2) we set 
$
\D\!\Sph_G=\oplus_{i\in\ZZ} \Sph_G[i]\subset \D(\Gr_G)
$ 
The above Satake equivalence is extended to an equivalence
of symmetric monoidal categories 
$$
\Loc^{\gr}: \Rep(\check{G}\times\Gm)\,\iso\, \D\Sph_G,
$$
so that $z\in\Gm$ acts on $V\in\Rep(\check{G}\times\Gm)$ by $z\mapsto z^{-r}$ if and only if $\Loc^{\gr}(V)\in \Sph_G[r]$. 
The above Hecke action is extended to an action of $\D\Sph_G$ on $\D^{\prec}(\Bun_G)$, so that $\H^{\la}_G(\cS[r],\cdot)=\H^{\la}_G(\cS, \cdot)[r]$ for $\cS\in\Sph_G$. We extend $\ast$ to a functor $\ast: \D\Sph_G\to\D\Sph_G$ by $\ast(\cS[r])\,\iso\, (\ast\cS)[r]$. 

\section{Geometric Langlands functoriality and the theta-lifting}
\label{Sect_functoriality}

In this section we formulate our results and definitions related to the geometric Langlands functoriality. We prove Theorem~\ref{Thm_Hecke_propety_of_cM_implies} consisting of two parts. The 'straight direction' part is known (cf. \cite{L1}, Theorem~3), and the `backward direction' part is new to the best of our knowledge. We review the geometric theta-lifting (cf. \cite{L1, Ga}) and the geometric Eisenstein series (cf. \cite{BG}) giving examples of the geometric Langlands functoriality. 
 
\ssec{Geometric Langlands functoriality} We use the following version of the notion of a Hecke eigen-sheaf from (\cite{G2}, Section~2.8). 
Write $s$ for the involution of $X^2$ permuting the two copies of $X$. The diagonal in $X^2$ is sometimes denoted $\triangle(X)$.

\begin{Def}
\label{Def_1.1.2}
(Partial Hecke property). Let $G,H$ be connected reductive groups over $k$, $\kappa: \check{G}\times\Gm\to\check{H}$ be a homomorphism over $\Qlb$. Pick $x\in X$. Let $E: \Rep(\check{H})\to \D\!\Loc_X$, $V\mapsto E^V$ be a symmetric monoidal functor. It yields the corresponding homomorphism $\sigma:\pi_1(X, x)\times\Gm\to\check{H}$. 

 We say that $K\in \D^{\prec}(\Bun_G)$ is equipped with a $E$-Hecke (or $\sigma$-Hecke) property with respect to $\kappa$ if for $V\in\Rep(\check{H})$ we are given a functorial isomorphism
\begin{equation}
\label{iso_for_Hecke_key}
\alpha_V: \H^{\la}_G(V, K)\,\iso\, K\boxtimes E^V[1]
\end{equation}
on $\Bun_G\times X$ such that H1) and H2) below are satisfied. First, for $V_1,\ldots, V_n\in\Rep(\check{H})$
iterating $\alpha$, one gets an isomorphism on $\Bun_G\times X^n$
$$
\alpha_{V_1,\ldots, V_n}:
\H^{\la}_G(V_1\boxtimes\ldots\boxtimes V_n, K)\,\iso\, K\boxtimes E^{V_1}[1]\boxtimes\ldots\boxtimes E^{V_n}[1]
$$
We require that for $V_1, V_2\in\Rep(\check{H})$ the following two diagrams commute\\
H1) 
$$
\begin{array}{ccc}
\H^{\la}_G(V_1\boxtimes V_2, K) & \toup{\alpha_{V_1, V_2}} & K\boxtimes E^{V_1}[1]\boxtimes E^{V_2}[1]\\
\downarrow &&\downarrow\\
(\id\times s)^*\H^{\la}_G(V_2\boxtimes V_1, K) & \toup{(\id\times s)^*\alpha_{V_2, V_1}} & K\boxtimes s^*(E^{V_2}[1]\boxtimes E^{V_1}[1]),\\
\end{array}
$$
H2) 
$$
\begin{array}{ccc}
\H^{\la}_G(V_1\boxtimes V_2, K)\mid_{\Bun_G\times \triangle(X)} & \toup{\alpha_{V_1, V_2}} & K\boxtimes E^{V_1}[1]\boxtimes E^{V_2}[1]\mid_{\Bun_G\times \triangle(X)} \\
\downarrow && \downarrow\\
\H^{\la}_G(V_1\otimes V_2, K)[1] & \toup{\alpha_{V_1\otimes V_2}} & K\boxtimes E^{V_1\otimes V_2}[2]
\end{array}
$$
Here the vertical arrows are the natural isomorphisms. It is understood that $\Rep(\check{H})$ acts on $\D^{\prec}(\Bun_G)$ via the restriction by $\kappa$.

 If moreover $G=H$ and $\kappa: \check{G}\times\Gm\to\check{G}$ is the projection then we say that $K$ is a $E$-Hecke eigen-sheaf. 
\end{Def}

 We insist that $K\in D^{\prec}(\Bun_G)$ in Definition~\ref{Def_1.1.2} has only a `partial Hecke property', the isomorphisms $\alpha_V$ are not given for all $V\in\Rep(\check{G})$, but only for a part of them.
\begin{Rem} The existence of the left vertical isomorphism in the diagram H1) of Definition~\ref{Def_1.1.2} is evident over $\Bun_G\times (X^2-\triangle(X))$. The fact that it extends to the whole of $\Bun_G\times X^2$ follows from (\cite{G2}, Proposition~2.8). 
\end{Rem} 
\begin{Rem} 
\label{Rem_Arthur_SL_2}
In the situation of Definition~\ref{Def_1.1.2} let $\sigma: \pi_1(X,x)\times\Gm\to \check{G}$ be a homomorphism, and $K\in\D^{\prec}(\Bun_G)$ be a nonzero $\sigma$-Hecke eigen-sheaf. According to the Arthur-Langlands philosophy, $\sigma$ should be a composition
$$
\pi_1(X,x)\times\Gm\toup{\id\times t} \pi_1(X,x)\times\SL_2\to \check{G},
$$ 
where $t: \Gm\hook{}\SL_2$ is the torus of diagonal matrices (\cite{F}, Section~4.3). 
\end{Rem} 
\begin{Rem} The notion of a Hecke eigen-sheaf is better formulated (from the point of view of the formalism of \cite{GR}) on a $\DG$-level,  for example as (\cite{GLys}, Section~9.5.3). Our Definition~\ref{Def_1.1.2} on the level of triangulated categories is a compromise as we will apply the results of \cite{L1, L3}, which are for the moment not established on the $\DG$-level.
\end{Rem}
 
\sssec{} 
\label{Sect_functoriality_first}
Let $G,H$ be connected reductive groups over $k$. 
Set for brevity $\Bun_{G,H}=\Bun_G\times\Bun_H$.
Assume given a complex $\cM\in\D^{\prec}(\Bun_{G,H})$. Consider the diagram of projections
$$
\Bun_H\getsup{p_H}\Bun_{G,H}\toup{p_G} \Bun_G
$$
Define the functors $F_H: \D^-(\Bun_G)_!\to \D^{\prec}(\Bun_H)$ and $F_G: \D^-(\Bun_H)_!\to\D^{\prec}(\Bun_G)$ by
$$
F_H(K)=(p_{H})_!((p_G^*K)\otimes\cM)[-\dim\Bun_G]
\;\;\;\mbox{and}\;\;\; F_G(K)=(p_G)_!((p_H^*K)\otimes\cM)[-\dim\Bun_H]
$$
The finiteness assumptions on the corresponding derived categories are imposed in order for the definition to make sense in the formalism of \cite{LO}. 

Removing the constructibility assumptions, one gets the functors $F_H: \D_{nc}(\Bun_G)\to \D_{nc}(\Bun_H)$ and $F_G: \D_{nc}(\Bun_H)\to\D_{nc}(\Bun_G)$ defined by the same formulas. They extend the corresponding lifting functors for constructible sheaves.
 
 For a scheme of finite type $S$
by abuse of notations we still denote by 
$$
F_H: \D_{nc}(\Bun_G\times S)\to \D_{nc}(\Bun_H\times S)
$$ 
the functor defined as above with $\cM$ replaced by $\pr^*\cM$, where $\pr: \Bun_H\times\Bun_G\times S\to \Bun_H\times\Bun_G$ is the projection (and similarly for $F_G$).

 Let $\kappa: \check{G}\times\Gm\to \check{H}$ be a homomorphism. We assume that the composition 
$$
\Rep(\check{H})\toup{\ast}\Rep(\check{H})\toup{\Res^{\kappa}}\Rep(\check{G}\times\Gm)
$$ 
is isomorphic to the composition $\Rep(\check{H})\toup{\Res^{\kappa}}\Rep(\check{G}\times\Gm)\toup{\ast} \Rep(\check{G}\times\Gm)$ (cf. Remark~\ref{Rem_2.1.4} below). This holds in all the examples we are interested in. Denote by $\kappa_{ex}: \check{G}\times\Gm\to \check{H}\times\Gm$ the map $(\kappa, \pr)$, where $\pr:\check{G}\times\Gm\to\Gm$ is the projection.
 
\begin{Def} (Straight direction). Assume given isomorphisms on $\Bun_H\times X$
$$
\alpha_V: \H^{\la}_H(V, F_H(K))\,\iso\, F_H\H^{\la}_G(V, K) 
$$
functorial in $K\in \D_{nc}(\Bun_G)$ and $V\in\Rep(\check{H})$. It is understood that $\Rep(\check{H})$ acts on $\D_{nc}(\Bun_G)$ via the restriction through $\kappa$. We require the properties H'1) and H'2) below. First, for $V_1,\ldots, V_n\in \Rep(\check{H})$ iterating $\alpha_V$, one gets the isomorphism over $\Bun_H\times X^n$
$$
\alpha_{V_1,\ldots, V_n}: \H^{\la}_H(V_1\boxtimes\ldots\boxtimes V_n, F_H(K))\,\iso\, F_H\H^{\la}_G(V_1\boxtimes\ldots\boxtimes V_n, K)
$$
We require that for $V_1, V_2\in\Rep(\check{H})$ the following diagrams commute\\
H'1) 
$$
\begin{array}{ccc}
\H^{\la}_H(V_1\boxtimes V_2, F_H(K)) & \toup{\alpha_{V_1, V_2}} & F_H\H^{\la}_G(V_1\boxtimes V_2, K)\\
\downarrow && \downarrow\\
(\id\times s)^*\H^{\la}_H(V_2\boxtimes V_1, F_H(K)) & \toup{(\id\times s)^*\alpha_{V_2, V_1}} & F_H(\id\times s)^*\H^{\la}_G(V_2\boxtimes V_1, K),
\end{array}
$$
\\
H'2)
$$
\begin{array}{ccc}
\H^{\la}_H(V_1\boxtimes V_2, F_H(K))\mid_{\Bun_H\times \triangle(X)} & \toup{\alpha_{V_1, V_2}} & F_H\H^{\la}_G(V_1\boxtimes V_2, K)\mid_{\Bun_H\times \triangle(X)}
\\
\downarrow && \downarrow\\
\H^{\la}_H(V_1\otimes V_2, F_H(K))[1] & \toup{\alpha_{V_1\otimes V_2}} & F_H \H^{\la}_G(V_1\otimes V_2, K)[1]
\end{array}
$$
Here the vertical arrows are natural isomorphisms. In this case we say that $F_H: \D_{nc}(\Bun_G)\to \D_{nc}(\Bun_H)$ commutes with the Hecke actions along $\kappa$ (so, realizes the geometric Langlands functoriality for $\kappa$). 
\end{Def}
\begin{Rem} 
\label{Rem_2.1.4}
Assume $F_H:\D_{nc}(\Bun_G)\to \D_{nc}(\Bun_H)$ commutes with Hecke actions along $\kappa$.  According to Arthur-Langlands philosophy, $\kappa$ is expected always to be a composition
$$
\check{G}\times\Gm\toup{\id\times t} \check{G}\times\SL_2\to \check{H},
$$
where $t: \Gm\to\SL_2$ is the torus of diagonal matrices.
\end{Rem}

\begin{Def} 
\label{Def_2.1.6}(Backward direction). Assume we are in the situation of Section~\ref{Sect_functoriality_first}, so $\kappa: \check{G}\times\Gm\to \check{H}$, and $F_G$ is the lifting functor in the backward direction. Assume given isomorphisms on $\Bun_G\times X$
$$
\alpha_V: \H^{\la}_G(V, F_G(K))\,\iso\, F_G\H^{\la}_H(V, K)
$$
functorial in $K\in \D_{nc}(\Bun_H)$ and $V\in\Rep(\check{H})$. 
It is understood that $\Rep(\check{H})$ acts on $\D_{nc}(\Bun_G)$ via the restriction through $\kappa$. We require the properties H''1) and H''2) below. First, for $V_1,\ldots, V_n\in \Rep(\check{H})$ iterating $\alpha_V$, one gets the isomorphism over $\Bun_G\times X^n$
$$
\alpha_{V_1,\ldots, V_n}: \H^{\la}_G(V_1\boxtimes\ldots\boxtimes V_n, F_G(K))\,\iso\, F_G\H^{\la}_H(V_1\boxtimes\ldots\boxtimes V_n, K)
$$
We require that for $V_1, V_2\in\Rep(\check{H})$ the following diagrams commute\\
H''1) 
$$
\begin{array}{ccc}
\H^{\la}_G(V_1\boxtimes V_2, F_G(K)) & \toup{\alpha_{V_1, V_2}}  & F_G\H^{\la}_H(V_1\boxtimes V_2, K) \\
\downarrow && \downarrow\\
(\id\times s)^*\H^{\la}_G(V_2\boxtimes V_1, F_G(K)) & \toup{(\id\times s)^*\alpha_{V_2, V_1}} & F_G(\id\times s)^*\H^{\la}_H(V_2\boxtimes V_1, K)
\end{array}
$$
H''2)
$$
\begin{array}{ccc}
\H^{\la}_G(V_1\boxtimes V_2, F_G(K))\mid_{\Bun_G\times \triangle(X)} & \toup{\alpha_{V_1, V_2}}  & F_G\H^{\la}_H(V_1\boxtimes V_2, K)\mid_{\Bun_G\times \triangle(X)} \\
\downarrow && \downarrow\\
\H^{\la}_G(V_1\otimes V_2, F_G(K))[1] & \toup{\alpha_{V_1\otimes V_2}} & F_G\H^{\la}_H(V_1\otimes V_2, K)[1]
\end{array}
$$
Here the vertical arrows are natural isomorphisms. In this case we say that $F_G$ commutes with the Hecke actions backward $\kappa$. 
\end{Def}

\begin{Cor} 
\label{Cor_2.1.8_partial_Hecke}
i) Assume $F_H:\D_{nc}(\Bun_G)\to \D_{nc}(\Bun_H)$ commutes with Hecke functors along $\kappa$. Let $x\in X$, $\sigma: \pi_1(X,x)\times\Gm\to \check{G}$ be a homomorphism. Write 
$$
\sigma^{ex}: \pi_1(X,x)\times\Gm\to \check{G}\times\Gm
$$ 
for the extension of $\sigma$, whose second component is the projection on $\Gm$. Let $\sigma_H=\kappa\comp \sigma^{ex}$. If $K\in\D^-(\Bun_G)_!$ is a $\sigma$-Hecke eigen-sheaf then $F_H(K)\in\D^{\prec}(\Bun_H)$ is naturally a $\sigma_H$-Hecke eigen-sheaf.

\smallskip\noindent
ii) Assume $F_G: \D_{nc}(\Bun_H)\to\D_{nc}(\Bun_G)$ commutes with Hecke functors backword $\kappa$. Let $x\in X$, $\sigma: \pi_1(X,x)\times\Gm\to \check{H}$ be a homomorphism. If $K\in\D^-(\Bun_H)_!$ is a $\sigma$-Hecke eigen-sheaf then $F_G(K)\in\D^{\prec}(\Bun_G)$ is naturally equipped with a $\sigma$-Hecke property with respect to $\kappa$. So, the Hecke property of $F_G(K)$ is only partial. 
\end{Cor}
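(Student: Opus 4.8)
\textbf{Proof proposal for Corollary~\ref{Cor_2.1.8_partial_Hecke}.}

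The plan is to deduce both parts formally from Theorem~\ref{Thm_Hecke_propety_of_cM_implies} (the straight/backward direction statement) by feeding in the Hecke-eigen-property of $K$ and composing with the isomorphisms $\alpha_V$ supplied there, then checking that the two compatibility axioms survive this composition. For part i), the straight-direction part of Theorem~\ref{Thm_Hecke_propety_of_cM_implies} gives, for each $V\in\Rep(\check H)$, a functorial isomorphism $\H^{\la}_H(V,F_H(K))\iso F_H\H^{\la}_G(V,K)$ on $\Bun_H\times X$. Since $K$ is a $\sigma$-Hecke eigen-sheaf, $\H^{\la}_G(V,K)\iso K\boxtimes E^V[1]$ where $E=E_{\check G}$ is the symmetric monoidal functor $\Rep(\check G\times\Gm)\to\D\!\Loc_X$ attached to $\sigma^{ex}$, evaluated on $\Res^{\kappa}V$; by our standing assumption on $\kappa$ (that $\ast$ commutes with $\Res^{\kappa}$ up to isomorphism) and the definition of $\sigma_H=\kappa\circ\sigma^{ex}$, the local system $E^{\Res^\kappa V}$ is precisely $E_{\sigma_H}^V$. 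Applying $F_H$ (which is $\Qlb$-linear and commutes with $\boxtimes E^V[1]$ because the latter is pulled back along the projection away from $\Bun_G\times\Bun_H$), one gets $F_H\H^{\la}_G(V,K)\iso F_H(K)\boxtimes E_{\sigma_H}^V[1]$. Composing the two isomorphisms yields the desired $\alpha_V^{H}:\H^{\la}_H(V,F_H(K))\iso F_H(K)\boxtimes E_{\sigma_H}^V[1]$.

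It then remains to verify axioms H1), H2) of Definition~\ref{Def_1.1.2} for $\alpha^H$. These both reduce to pasting commutative diagrams: the H'1), H'2) diagrams of the straight-direction definition (satisfied by hypothesis on $F_H$), the H1), H2) diagrams for $K$ (satisfied since $K$ is a $\sigma$-Hecke eigen-sheaf), and the evident naturality of $F_H$ applied to the structural isomorphisms of the symmetric monoidal functor $E$ (the braiding compatibility giving H1) and the fusion/tensor compatibility giving H2)). Concretely, for H1) one stacks the H'1)-square for $F_H$ on top of the image under $F_H$ of the H1)-square for $K$, using that $s$ acts only on the $X^2$-factor so $F_H$ commutes with $(\id\times s)^*$; for H2) one restricts to $\Bun\times\triangle(X)$ and uses that $E$ is monoidal, i.e. $E^{V_1}\otimes E^{V_2}\mid_{\triangle(X)}\iso E^{V_1\otimes V_2}$ compatibly with the fusion product on $\D\!\Sph_G$ under $\Loc^{\gr}$. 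All squares involved are already known to commute, so the outer rectangle does.

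For part ii) the argument is the mirror image, now invoking the backward-direction part of Theorem~\ref{Thm_Hecke_propety_of_cM_implies}: one has $\H^{\la}_G(V,F_G(K))\iso F_G\H^{\la}_H(V,K)$ for $V\in\Rep(\check H)$, and since $K\in\D^-(\Bun_H)_!$ is a $\sigma$-Hecke eigen-sheaf with $\sigma:\pi_1(X,x)\times\Gm\to\check H$, we have $\H^{\la}_H(V,K)\iso K\boxtimes E_\sigma^V[1]$; applying $F_G$ and composing gives $\alpha_V:\H^{\la}_G(V,F_G(K))\iso F_G(K)\boxtimes E_\sigma^V[1]$, and H''1), H''2) together with H1), H2) for $K$ yield H1), H2) for $F_G(K)$ by the same diagram-pasting. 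The crucial point — and the reason this is only a \emph{partial} Hecke property — is that the isomorphisms $\alpha_V$ are produced only for $V$ in the image of $\Rep(\check H)\toup{\Res^\kappa}\Rep(\check G\times\Gm)$, i.e. for those $\check G\times\Gm$-representations induced from $\check H$; for a general $W\in\Rep(\check G)$ no such $\alpha_W$ is available, since the backward functor $F_G$ only interacts with the $\check H$-Hecke action via $\kappa$. I expect the only genuine bookkeeping obstacle to be the careful identification, in part i), of $E^{\Res^\kappa V}$ with $E_{\sigma_H}^V$ as \emph{symmetric monoidal} functors — matching up the $\Gm$-grading introduced by $\kappa_{ex}$ and the $\ast$-compatibility hypothesis — but this is exactly the content of Remark~\ref{Rem_2.1.4} and poses no real difficulty; everything else is formal.
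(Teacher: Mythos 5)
Your argument is correct and is essentially the paper's (the paper states this corollary without proof, as an immediate formal consequence): compose the isomorphisms $\H^{\la}_H(V,F_H(K))\iso F_H\H^{\la}_G(V,K)$ (resp. $\H^{\la}_G(V,F_G(K))\iso F_G\H^{\la}_H(V,K)$) with $F_H$ (resp. $F_G$) applied to the eigen-isomorphisms of $K$, and paste the H'/H'' diagrams with the H1), H2) diagrams for $K$. One small correction: those isomorphisms are the \emph{hypothesis} of the corollary (the content of the Definitions of ``commutes with Hecke actions along/backward $\kappa$''), not an output of Theorem~\ref{Thm_Hecke_propety_of_cM_implies}, which only supplies that hypothesis when $\cM$ satisfies the Hecke property for $\kappa$.
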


\begin{Def} Assume we are in the situation of Section~\ref{Sect_functoriality_first}. Assume given isomorphisms
$$
\beta_V: \H^{\la}_H(V, \cM)\,\iso\, \H^{\ra}_G(V, \cM)
$$
in $\D^{\prec}(\Bun_{G, H}\times X)$ functorial in $V\in \Rep(\check{H})$.  We require properties $\cH$1), $\cH$2) below. First, iterating $\beta_V$, one gets isomorphisms over $\Bun_{G,H}\times X^n$
$$
\beta_{V_1,\ldots, V_n}: \H^{\la}_H(V_1\boxtimes\ldots\boxtimes V_n, \cM)\,\iso\, \H^{\ra}_G(V_1\boxtimes\ldots\boxtimes V_n, \cM)
$$
Here by $\H^{\ra}_G(V, \cM)$ we mean $\H^{\ra}_G(\Res^{\kappa}(V), \cM)$, and similarly for the iterated isomorphisms. It is required that for $V_1, V_2\in\Rep(\check{H})$ the following diagrams commute\\
$\cH$1)
$$
\begin{array}{ccc}
\H^{\la}_H(V_1\boxtimes V_2,\cM) & \toup{\beta_{V_1, V_2}} & \H^{\ra}_G(V_1\boxtimes V_2, \cM)\\
\downarrow && \downarrow\\
(\id\times s)^*\H^{\la}_H(V_2\boxtimes V_1, \cM) & \toup{(\id\times s)^*\beta_{V_2, V_1}} & (\id\times s)^*\H^{\ra}_G(V_2\boxtimes V_1, \cM)
\end{array}
$$
$\cH$2)
$$
\begin{array}{ccc}
\H^{\la}_H(V_1\boxtimes V_2,\cM)\mid_{\Bun_{G,H}\times\triangle(X)} & \toup{\beta_{V_1, V_2}} & H^{\ra}_G(V_1\boxtimes V_2, \cM)\mid_{\Bun_{G,H}\times\triangle(X)} \\
\downarrow && \downarrow\\
\H^{\la}_H(V_1\otimes V_2, \cM)[1] & \toup{\beta_{V_1\otimes V_2}} & \H^{\ra}_G(V_1\otimes V_2, \cM)[1]
\end{array}
$$ 
Here the vertical arrows are natural isomorphisms. In this case we say that $\cM$ satisfies the Hecke property for $\kappa$. 
\end{Def}

\begin{Thm} 
\label{Thm_Hecke_propety_of_cM_implies}
Assume $\cM\in\D^{\prec}(\Bun_{G,H})$ satisfies the Hecke property for $\kappa$. Then $F_H$ (resp., $F_G$) commutes with the Hecke functors along $\kappa$ (resp., backward $\kappa$).
\end{Thm}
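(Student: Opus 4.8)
Here is the strategy I would follow.

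The plan is to recognize every functor in the statement as an integral transform and to reduce both assertions to the Hecke property of the kernel $\cM$ by base change and the projection formula. For $V\in\Rep(\check H)$ the functor $\H^{\la}_H(V,-)$ on $\D_{nc}(\Bun_H)$ is itself an integral transform: its kernel lives on $\Bun_H\times\Bun_H\times X$, supported on the Hecke correspondence $\cH_H$ (via $h^{\la}_H$, $h^{\ra}_H$, $\supp$) and equal to the Satake sheaf of $V$ there. Composing $\H^{\la}_H(V,-)$ with $F_H$ and running base change along the cartesian squares built out of $\cH_H$, $p_H$, $p_G$, together with the projection formula, rewrites $\H^{\la}_H(V,F_H(-))$ as an integral transform with a kernel on $\Bun_{G,H}\times X$. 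Symmetrically, $F_H\comp\H^{\la}_G(V,-)$ is rewritten as an integral transform with a kernel on $\Bun_{G,H}\times X$. The key point is that these two kernels are Hecke-modifications of $\cM$: one shows
$$
\H^{\la}_H(V, F_H(K))\;\iso\;\Phi_{\H^{\la}_H(V,\cM)}(K),
\qquad
F_H(\H^{\la}_G(V,K))\;\iso\;\Phi_{\H^{\ra}_G(V,\cM)}(K),
$$
naturally in $K\in\D_{nc}(\Bun_G)$, where $\Phi_{(-)}$ is the integral transform with the indicated kernel and $\H^{\ra}_G(V,\cM)$ means $\H^{\ra}_G(\Res^{\kappa}(V),\cM)$. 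In the first identification the Hecke modification hits the $\Bun_H$-leg of $\cM$ (and is postcomposed with $F_H$); in the second it hits the $\Bun_G$-leg (and is precomposed). Moving the Hecke modification past the pull-push that defines $F_H$ forces an exchange of $h^{\la}$ with $h^{\ra}$ through the involution of the Hecke stack induced by $g\mapsto g^{-1}$ on the loop group, which on the spherical category is the Chevalley involution $\ast$; it is precisely here that the hypothesis reconciling $\ast\comp\Res^{\kappa}$ with $\Res^{\kappa}\comp\ast$ is used, so that the Satake parameters on the two sides match. Given the two displayed identifications, applying $\Phi_{(-)}(K)$ to the Hecke property isomorphism $\beta_V\colon\H^{\la}_H(V,\cM)\iso\H^{\ra}_G(V,\cM)$ produces the required isomorphism $\alpha_V\colon\H^{\la}_H(V,F_H(K))\iso F_H(\H^{\la}_G(V,K))$, functorial in $K$ and $V$.

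The compatibilities H'1), H'2) are then formal. By construction the iterated isomorphism $\alpha_{V_1,\dots,V_n}$ is obtained by applying $\Phi_{(-)}(K)$ to the iterated kernel isomorphism $\beta_{V_1,\dots,V_n}$, using that convolution of the Hecke kernels on $\cM$ is associative and compatible with the fusion structure over $X^n$. Hence the square H'1) is the image under $\Phi_{(-)}(K)$ of the square $\cH$1) for $\cM$, and H'2) is the image of $\cH$2); since applying an integral transform takes commuting squares to commuting squares, H'1) and H'2) follow from $\cH$1), $\cH$2). For the backward functor $F_G$ one repeats the whole argument with the roles of $G$ and $H$ interchanged in the transform but the same kernel $\cM$: $\H^{\la}_G(V,F_G(K))$ becomes the transform of $K$ by a Hecke-modification of the $\Bun_G$-leg of $\cM$, $F_G(\H^{\la}_H(V,K))$ the transform by a Hecke-modification of the $\Bun_H$-leg, and the two kernels are again matched by $\beta_V$ (up to the same $\ast$-bookkeeping controlled by the hypothesis on $\kappa$); this gives the isomorphisms of Definition~\ref{Def_2.1.6}, with H''1), H''2) deduced from $\cH$1), $\cH$2) as before. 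To descend from $\D_{nc}$ to the constructible statements in $\D^{\prec}$ one only uses that $F_H$, $F_G$ and the Hecke functors preserve the relevant subcategories, which is built into their construction.

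I expect the main obstacle to be the middle step: keeping track of which end of each Hecke correspondence carries which leg of $\cM$, and placing the involutions $\ast$ and the twisted products $(-)^l$, $(-)^r$ correctly so that the Satake parameters on the two sides genuinely coincide (this is the only place the compatibility of $\ast$ with $\Res^{\kappa}$ enters). Once that normalization is pinned down, everything else is a routine application of base change, the projection formula, and the functoriality of integral transforms.
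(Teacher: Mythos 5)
Your proposal is correct and follows essentially the same route as the paper: both sides of each desired isomorphism are rewritten, via base change and the projection formula, as integral transforms whose kernels are Hecke modifications of $\cM$ on the $\Bun_G$- or $\Bun_H$-leg (with the $\ast$/$\Res^{\kappa}$ compatibility reconciling the $l$/$r$ normalizations), and then $\beta_V$ identifies the two kernels, with H'1)--H''2) following formally from $\cH$1), $\cH$2). This is exactly how the paper handles the backward direction explicitly (diagram (\ref{diag_one})) and how the cited argument from \cite{L1} handles the straight direction.
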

\begin{proof}
i) The case of $F_H$ is established precisely as in (\cite{L1}, the derivation of Theorem~3 part 1) from Theorem 4 part 1)). 

\medskip\noindent
ii) To establish the desired structure for $F_G$,
consider the diagram 
\begin{equation}
\label{diag_one}
\begin{array}{ccccc}
\Bun_H\times X & \getsup{h^{\la}_H\times\supp} & \cH_H & \toup{h_H^{\ra}} & \Bun_H\\
\uparrow\lefteqn{\scriptstyle \gq\times\id} && \uparrow && \uparrow\lefteqn{\scriptstyle \gq}\\
\Bun_{G,H}\times X & \getsup{h^{\la}_H\times\supp} & \cH_H\times \Bun_G & \toup{h_H^{\ra}} & \Bun_{G,H}\\
\downarrow\lefteqn{\scriptstyle p_G\times\id}\\
\Bun_G\times X
\end{array}
\end{equation}
similar to that of (\cite{L1}, Section~8.1). From definitions for $V\in\Rep(\check{H})$, $K\in \D_{nc}(\Bun_H)$ we get 
\begin{equation}
\label{complex_1}
F_G\H^{\la}_H(V, K)\,\iso\,
(p_G\times\id)_!(\H^{\ra}_H(V, \cM)\otimes p_H^*K)[-\dim\Bun_H]
\end{equation}
We used the fact that 
$$
(\IC(\Bun_H)\tboxtimes \ast\cS)^r\,\iso\, (\IC(\Bun_H)\tboxtimes \cS)^l
$$ 
on $\cH_H$ for $\cS\in\Sph_H$, cf. Section~\ref{Sect_def_Hecke_functors}. By our assumptions, (\ref{complex_1}) identifies with
$$
(p_G\times\id)_!(\H^{\la}_G(V, \cM)\otimes p_H^*K)[-\dim\Bun_H]
$$
By the base change and the projection formula, the latter complex identifies with $\H^{\la}_G(V, F_G(K))$.
\end{proof}

\ssec{Theta-lifting}
\label{Section_Theta-lifting_2.2}

\sssec{} For $r\ge 1$ let $G_r$ be the group scheme on $X$ of automorphisms of $\cO^r_X\oplus\Omega^r$ preserving the natural symplectic form $\wedge^2(\cO^r_X\oplus\Omega^r)\to \Omega$. The stack $\Bun_{G_r}$ classifies $M\in\Bun_{2r}$ with a symplectic form $\wedge^2 M\to\Omega$. Let $\cA_r$ be the line bundle on $\Bun_{G_r}$ with fibre $\det\RG(X,M)$ at $M$. Write $\Bunt_{G_r}\to\Bun_{G_r}$ for the $\mu_2$-gerbe of square roots of $\cA_r$. The theta-sheaf $\Aut$ on $\Bunt_{G_r}$ is defined in \cite{L2}.

\sssec{Orthogonal-symplectic dual pair}  
\label{Section_2.2.2_theta}
Let $n,m\ge 1$. Let $G=G_n$ and $H=\SO_{2m}$ split. The stack $\Bun_H$ classifies $V\in\Bun_{2m}$, a nondegenerate symmetric form $\Sym^2 V\to \cO$, and a compatible trivialization $\det V\,\iso\,\cO_X$. The theta-lifting functors 
\begin{equation}
\label{functors_F_G_F_H}
F_G: \D^-(\Bun_H)_!\to \D^{\prec}(\Bun_G), \;\;\;\;\; F_H: \D^-(\Bun_G)_!\to \D^{\prec}(\Bun_H)
\end{equation}
from (\cite{L1}, Section~2.3) are defined as follows.

 Let $\tau: \Bun_G\times\Bun_H\to\Bun_{G_{2nm}}$ be the map sending $(M,V)$ to $M\otimes V$ with the induced symplectic form $\wedge^2(M\otimes V)\to\Omega$. There is a canonical lift 
$$
\tilde\tau: \Bun_G\times\Bun_H\to\Bunt_{G_{2nm}}
$$ 
exteding $\tau$ (cf. \cite{L1}, Section~2.3.1). Let $\cM=\tilde\tau^*\Aut[\dim\Bun_G\times\Bun_H-\dim\Bun_{G_{2nm}}]$. Using the kernel $\cM$, define functors (\ref{functors_F_G_F_H}) as in Section~\ref{Sect_functoriality_first}. 

\sssec{}  
\label{Sect_kappa_for_orth-sympl}
For $m>n$ define $\kappa: \check{G}\times\Gm\to\check{H}$ as in (\cite{L1}, Section~2.3.2). Namely, let $\cW=\Qlb^{2m}$ be the standard representation of $\check{H}=\SO_{2m}$. Pick an orthogonal decomposition $\cW=\cW_1\oplus \cW_2$, where $\dim \cW_1=2n+1$. Identify $\check{G}\,\iso\,\SO(\cW_1)$, this fixes the inclusion $\check{G}\hook{}\check{H}$. The connected centralizer of $\check{G}$ in $\check{H}$ is $\SO(\cW_2)$. Let $prin: \SL_2\to \SO(\cW_2)$ be the homomorphism corresponding to a principal (irreducible) representation of $\SL_2$ on $\cW_2$. Then the restriction of $\kappa$ to $\Gm$ is the composition $\Gm\hook{}\SL_2\toup{prin}\SO(\cW_2)\hook{} \check{H}$, where the first map is the maximal torus of diagonal matrices in $\SL_2$.   
  
  For $m\le n$ define $\kappa: \check{H}\times\Gm\to \check{G}$ as in \select{loc.cit.} Namely, let $\cW=\Qlb^{2n+1}$ be the standard representation of $\check{G}=\SO_{2n+1}$. Pick an orthogonal decomposition $\cW=\cW_1\oplus \cW_2$, where $\dim\cW_1=2m$ and identify $\check{H}\,\iso\, \SO(\cW_1)$. This fixes the inclusion $\check{H}\hook{}\check{G}$. The connected centralizer of $\check{H}$ in $\check{G}$ is $\SO(\cW_2)$. Let $prin: \SL_2\to \SO(\cW_2)$ be the homomorphism corresponding to a principal (irreducible) representation of $\SL_2$ on $\cW_2$. Then the restriction of $\kappa$ to $\Gm$ is the composition $\Gm\hook{}\SL_2\toup{prin}\SO(\cW_2)\hook{} \check{G}$, where the first map is the maximal torus of diagonal matrices in $\SL_2$. 

\begin{Thm}[\cite{L1}, Theorem~4] Assume we are in the situation of 
Section~\ref{Section_2.2.2_theta}. \\
For $n,m\ge 1$ the complex $\cM$ satisfies the Hecke property for $\kappa$.
\end{Thm}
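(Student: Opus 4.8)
The plan is to reduce the statement — that the kernel $\cM = \tilde\tau^*\Aut[\ldots]$ satisfies the Hecke property for $\kappa$ in the orthogonal-symplectic dual pair — to the known bilinear behaviour of the theta-sheaf $\Aut$ under the Hecke functors on $\Bunt_{G_N}$, $N=2nm$. The key input is the fundamental Hecke property of the theta-sheaf: applying a Hecke modification at $x$ to one of the two factors (say, $M\in\Bun_G$) of $M\otimes V$, versus applying the dual Hecke modification to $V\in\Bun_H$, produces naturally isomorphic complexes after pullback by $\tilde\tau$, with the two sides governed by the standard representations of $\check{G}=\Sp_{2n}$ (resp. $\SO_{2n+1}$) and $\check{H}=\SO_{2m}$ on $M$ and $V$. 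Concretely, one needs the statement that, on the relevant Hecke stack over $\Bun_{G,H}\times X$, one has a canonical isomorphism between the twisted pullback of $\Aut$ along a modification of $M$ by the minuscule (or small) coweight and the twisted pullback along a modification of $V$; this is exactly the content of the local-global compatibility worked out for the theta-sheaf, and I would invoke it essentially verbatim.

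The first step is therefore to set up the global Hecke diagram for $\Bun_{G,H}$, in the style of diagram~(\ref{diag_one}), and to observe that $\tau$ (hence $\tilde\tau$) intertwines a Hecke modification of $M$ at $x$ with a Hecke modification of $M\otimes V$ at $x$ controlled by $\Res$ of the corresponding $\SL_2$-representation — this is where the factor $\SL_2$ of Arthur enters, and why $\kappa$ is built out of $prin: \SL_2\to\SO(\cW_2)$. Next, I would pass to the strictly local picture at a point $x$: restrict to $_x\cH_G\times\Bun_H$ and $\Bun_G\times{_x\cH_H}$, identify the twisted external products $(\cdot\tboxtimes\cS)^{l}$, $(\cdot\tboxtimes\cS)^{r}$ as in Section~\ref{Sect_def_Hecke_functors}, and use the Satake equivalence to reduce the claimed isomorphism $\beta_V$ for all $V\in\Rep(\check{H})$ to the case $V$ ranges over a set of tensor generators — for $\SO_{2m}$ it suffices to treat the standard representation $\cW$ (and its exterior powers, obtained by fusion). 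The compatibilities $\cH$1) and $\cH$2) then become, respectively, the commutativity of $\beta$ with the commutativity constraint (the factorization/fusion structure over $X^2-\triangle(X)$, extended across the diagonal via (\cite{G2}, Proposition~2.8)) and the compatibility with the tensor structure of Satake under fusion along the diagonal; both are formal consequences of the functoriality of the construction of $(\cdot\tboxtimes\cS)$ with respect to convolution, once the basic isomorphism $\beta_\cW$ is produced with its natural normalizations.

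The construction of $\beta_\cW$ itself — the geometric heart — I would take from the theta-correspondence computations of \cite{L1, L2, L3}: the essential local statement is that the theta-sheaf $\Aut$ on $\Bunt_{G_N}$, pulled back along the modification coming from tensoring $M$ by $\cO(x)$-type elementary modifications, decomposes according to the weights of the $\Gm$-action prescribed by $prin$, and matches the analogous decomposition on the $H$-side; this is the Jacquet-module / partial-Fourier-coefficient computation for the Weil representation that underlies the whole theta-lifting program, and it is precisely the content of (\cite{L1}, Theorem~4) in the original formulation. The main obstacle — and the only place real work is needed — is verifying the compatibility $\cH$2) with the tensor/monoidal structure on the $H$-side (the passage to the diagonal), i.e. checking that the associativity and unit constraints of the Satake category $\Sph_H$ are respected by the isomorphisms coming from the theta-sheaf; this requires carefully tracking the factorization structure of $\Aut$ through the fusion product, and is where one must be most careful about shifts, the $\ast$-twist, and the Chevalley involution entering through the hypothesis that $\Res^\kappa\comp\ast\cong\ast\comp\Res^\kappa$. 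Everything else is a formal unwinding of definitions along the lines of the proof of Theorem~\ref{Thm_Hecke_propety_of_cM_implies} above.
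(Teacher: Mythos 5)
The paper gives no proof of this statement: as the theorem header indicates, it is quoted verbatim from (\cite{L1}, Theorem~4) --- unlike the similitude and $\GL_n$ cases, where the kernel $\cM$ involves a pushforward $q_!$ and a short bridging argument is actually supplied. Your sketch is a reasonable outline of how that result is established in \cite{L1} (reduction to the standard representation of $\check{H}$ via Satake and fusion, plus the local computation of the Weil representation under Hecke modifications), but observe that the ``geometric heart'' you ultimately defer to is precisely the cited theorem itself, so within this paper nothing beyond the citation is required and your argument adds no independent content.
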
  
\begin{Cor} Assume we are in the situation of Section~\ref{Section_2.2.2_theta}. 
\begin{itemize}
\item If $m\le n$ then $F_G$ (resp., $F_H$) commutes with Hecke actions along $\kappa$ (resp., backward $\kappa$).
\item If $m>n$ then $F_H$ (resp., $F_G$) commutes with Hecke actions along $\kappa$ (resp., backward $\kappa$).
\end{itemize}
\end{Cor}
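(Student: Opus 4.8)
The plan is to deduce the Corollary directly from \thmref{Thm_Hecke_propety_of_cM_implies} by feeding it the Hecke property of the theta-kernel $\cM$ just asserted in the preceding Theorem of \cite{L1}. Indeed, once we know that $\cM = \tilde\tau^*\Aut[\dim\Bun_G\times\Bun_H - \dim\Bun_{G_{2nm}}]$ satisfies the Hecke property for the relevant $\kappa$ (in the sense of the Definition preceding \thmref{Thm_Hecke_propety_of_cM_implies}), the general statement \thmref{Thm_Hecke_propety_of_cM_implies} immediately says that the lifting functor in the straight direction commutes with the Hecke actions along $\kappa$, and the lifting functor in the backward direction commutes with them backward $\kappa$. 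So the only real content is bookkeeping: identifying which of $G,H$ plays the role of the source group (the one whose dual receives $\kappa$) in each of the two regimes $m\le n$ and $m>n$.

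First I would recall the convention from \secref{Sect_kappa_for_orth-sympl}: when $m>n$ the homomorphism $\kappa$ goes $\check{G}\times\Gm\to\check{H}$ (with $\check{G}=\SO_{2n+1}$ the smaller group embedded in $\check H=\SO_{2m}$ via the splitting $\cW=\cW_1\oplus\cW_2$, $\dim\cW_1=2n+1$, and $\Gm\hookrightarrow\SL_2\toup{prin}\SO(\cW_2)$); whereas when $m\le n$ the homomorphism goes $\kappa:\check{H}\times\Gm\to\check{G}$ (now $\check H=\SO_{2m}\hookrightarrow\SO_{2n+1}=\check G$ with $\dim\cW_1=2m$). In each case \thmref{Thm_Hecke_propety_of_cM_implies} is applied with ``$G$'' in that theorem equal to the source of $\kappa$ and ``$H$'' equal to the target: so for $m>n$ we take $(G,H)$ in the theorem to be $(G_n,\SO_{2m})$ and get that $F_H$ commutes along $\kappa$ and $F_G$ backward $\kappa$; for $m\le n$ we take $(G,H)$ in the theorem to be $(\SO_{2m}, G_n)$, i.e.\ we swap the roles, obtaining that the lift into $\Bun_G=\Bun_{G_n}$ is the ``straight'' functor for this $\kappa$ and the lift into $\Bun_H=\Bun_{\SO_{2m}}$ is the ``backward'' one — which is exactly what the two bullets of the Corollary assert. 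One also has to check the compatibility hypothesis on $\ast$ and $\kappa$ imposed before the Straight-direction Definition, namely that $\Res^\kappa\comp\ast\,\iso\,\ast\comp\Res^\kappa$; this holds because the Chevalley involution of $\SO_{2m}$ restricts to that of the smaller orthogonal group and commutes with a principal $\SL_2$, and in any case it is exactly the assumption that ``holds in all the examples we are interested in'' stated there.

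Concretely the proof is two sentences. In the case $m>n$: by the preceding Theorem the kernel $\cM$ built in \secref{Section_2.2.2_theta} satisfies the Hecke property for $\kappa:\check G\times\Gm\to\check H$; by \thmref{Thm_Hecke_propety_of_cM_implies} applied with this $(G,H,\kappa)$, $F_H$ commutes with the Hecke functors along $\kappa$ and $F_G$ commutes with them backward $\kappa$. In the case $m\le n$: here $\kappa$ has source $\check H$ and target $\check G$; apply the preceding Theorem and \thmref{Thm_Hecke_propety_of_cM_implies} with the pair $(\SO_{2m},G_n)$ in place of $(G,H)$, so that the role of the ``straight'' functor is played by the lift into $\Bun_{G_n}$; this gives that $F_G$ commutes with Hecke actions along $\kappa$ and $F_H$ backward $\kappa$.

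I do not expect a genuine obstacle here — the statement is a formal corollary. The one place that requires care, rather than difficulty, is the index/role swap between the two regimes: in \cite{L1}'s conventions the symplectic group $G_n$ and the orthogonal group $\SO_{2m}$ are always called $G$ and $H$, but the direction of $\kappa$ (hence which lifting functor is the ``straight'' one to which the known case of \thmref{Thm_Hecke_propety_of_cM_implies} applies) flips at $m=n$; I would make sure the reader sees explicitly that for $m\le n$ one invokes \thmref{Thm_Hecke_propety_of_cM_implies} with the groups interchanged. Everything else — that $\cM$ lies in $\D^{\prec}(\Bun_{G,H})$, that the finiteness hypotheses on $F_G,F_H$ are met, that the $\ast$–$\kappa$ compatibility holds — has already been arranged in \secref{Section_2.2.2_theta}–\secref{Sect_kappa_for_orth-sympl} and in the hypotheses of the two Definitions, so no further verification is needed.
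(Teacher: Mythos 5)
Your proposal is correct and follows exactly the paper's route: the corollary is deduced by feeding the Hecke property of the theta-kernel $\cM$ (the preceding Theorem, i.e.\ \cite{L1}, Theorem~4) into Theorem~\ref{Thm_Hecke_propety_of_cM_implies}, with the only content being the role swap of $(G,H)$ according to the direction of $\kappa$ in the two regimes $m\le n$ and $m>n$. Your extra care about which functor is the ``straight'' one is exactly the right bookkeeping, and the paper adds only the remark that the straight direction was already established in (\cite{L1}, Theorem~3).
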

\begin{proof}
This follows from Theorem~\ref{Thm_Hecke_propety_of_cM_implies}. 
The straight direction is also established in (\cite{L1}, Theorem~3).
\end{proof}

\sssec{Theta-lifting for similitudes} 
\label{Section_2.2.5_similitudes}
Let $n, m\ge 1$. Let $\GG=\GSp_{2n}$, $\HH=\GSO_{2m}=(\Gm\times\SO_{2m})/(-1,-1)$ be as in (\cite{L3}, Section~2.4.4). Here $\GG,\HH$ are split. The theta-lifting functors 
\begin{equation}
\label{functors_theta-lift_similitudes}
F_{\GG}: \D^-(\Bun_{\HH})_!\to \D^{\prec}(\Bun_{\GG}), \;\;\;\;\; F_{\HH}: \D^-(\Bun_{\GG})_!\to \D^{\prec}(\Bun_{\HH})
\end{equation}
from (\cite{L3}, Definition~2.4.8) are as follows.  

  The stack $\Bun_{\GG}$ classifies $(M\in\Bun_{2n}, \cA\in\Bun_1)$ with a symplectic form $\wedge^2 M\to\cA$ on $X$. The stack $\Bun_{\HH}$ classifies $(V\in\Bun_{2m}, \cC\in\Bun_1)$, a nondegenerate symmetric form $\Sym^2 V\to\cC$, and a compatible trivialization $\gamma: \cC^{-m}\otimes\det V\,\iso\, \cO_X$. 
  
  Let $\Bun_{\GG\HH}=\Bun_{\GG}\times_{\Bun_1}\Bun_{\HH}$, where the map $\Bun_{\GG}\to\Bun_1$ sends $(M,\cA)$ to $\cA$, and $\Bun_{\HH}\to\Bun_1$ sends $(V,\cC,\gamma)$ to $\cC^{-1}\otimes\Omega$. The notation $\Bun_{\GG\HH}$ is not to be confused with $\Bun_{\GG,\HH}=\Bun_{\GG}\times\Bun_{\HH}$. 
  
  Let $\tau: \Bun_{\GG\HH}\to \Bun_{G_{2nm}}$ be the map sending a collection $(M, \cA, V, \cC,\gamma)$ to $M\otimes V$ with the induced symplectic form $\wedge^2(M\otimes V)\to\Omega$. The map $\tau$ admits a canonical lift 
$$
\tilde\tau: \Bun_{\GG\HH}\to \Bunt_{G_{2nm}}
$$ 
defined in (\cite{L4}, Section~2.4.7). Define the complex 
$$
\Aut_{\GG\HH}=\tilde\tau^*\Aut[\dim\Bun_{\GG\HH}-\dim\Bun_{G_{2nm}}]
$$
on $\Bun_{\GG\HH}$. Let $q: \Bun_{\GG\HH}\to \Bun_{\GG, \HH}$ be the natural map. Set $\cM=q_!\Aut_{\GG\HH}$.
Define the functors (\ref{functors_theta-lift_similitudes}) as in Section~\ref{Sect_functoriality_first}.   
  
\sssec{} For $m\ge 3$ consider the group $\Spin_m$ defined in (\cite{GW}, Section~6.3.3). By (\cite{GW}, Theorem~6.3.5), it is equipped with a distinguished surjection $\Spin_m\to \SO_m$ given by the standard representation, whose kernel is denoted $\{1,\iota\}\,\iso\,\mu_2$. 
For $m\ge 3$ set $\GSpin_m=\Gm\times\Spin_m/\{(-1,\iota)\}$. We convent that $\GSpin_2=\Gm\times\Gm$. Recall the Langlands dual groups are $\check{\HH}\,\iso\, \GSpin_{2m}, \check{\GG}\,\iso\,\GSpin_{2n+1}$ (cf. \cite{L3}, Section~3). 

\sssec{} 
\label{Sect_2.2.7_def_of_bar_kappa}
For $m>n$ define $\bar\kappa: \check{\GG}\times\Gm\to\check{\HH}$ as the map $(i_{\kappa},\delta_{\kappa})$ from (\cite{L3}, Sections~2.5 and 5.12.5). 
For $m\le n$ define $\bar\kappa: \check{\HH}\times\Gm\to \check{\GG}$ as $(i_{\kappa},\delta_{\kappa})$ from (\cite{L3}, Sections~2.5 and 5.12.6). 

 For example, for $m=3, n=2$ the map $i_{\kappa}: \check{\GG}\to\check{\HH}$ identifies with the quotient of the map $\Gm\times \Sp_4\to \Gm\times\SL_4$, $(x,y)\mapsto (x^{-1}, y)$ by the diagonally embedded $(-1,-1)$. We will especially need this case.

Consider the surjections $st: \check{\GG}\to \check{G}=\SO_{2n+1}$ and $st: \check{\HH}\to \check{H}=\SO_{2m}$
given by the standard representations. For $m\le n$ consider the map $\kappa: \check{H}\times\Gm\to\check{G}$ of Section~\ref{Sect_kappa_for_orth-sympl}. Then the diagram commutes
$$
\begin{array}{ccc}
\check{\HH}\times\Gm & \toup{\bar\kappa} & \check{\GG}\\
\downarrow\lefteqn{\scriptstyle st\times\id} && \downarrow\lefteqn{\scriptstyle st}\\
\check{H}\times\Gm & \toup{\kappa} &\check{G}
\end{array}
$$
Let $C(\check{\HH})\subset \check{\GG}$ be the connected centralizer of $i_{\kappa}(\check{\HH})$ in $\check{\GG}$, $\SL_2\toup{prin} C(\check{\HH})$ be the homomoprhism corresponding to the principal unipotent orbit. Then $\delta_{\kappa}: \Gm\to \check{\GG}$ is the composition $\Gm\hook{}\SL_2\toup{prin} C(\check{\HH})\hook{}\check{\GG}$. 
 
 For $m>n$ consider the map $\kappa: \check{G}\times\Gm\to\check{H}$ from Section~\ref{Sect_kappa_for_orth-sympl}. Then the diagram commutes
$$
\begin{array}{ccc}
\check{\GG}\times\Gm & \toup{\bar\kappa} & \check{\HH}\\
\downarrow\lefteqn{\scriptstyle st\times\id} && \downarrow\lefteqn{\scriptstyle st}\\
\check{G}\times\Gm & \toup{\kappa} & \check{H}
\end{array}
$$
Let $C(\check{\GG})$ be the connected centralizer of $i_{\kappa}(\check{\GG})$ in $\check{\HH}$. Let $\SL_2\toup{prin} C(\check{\GG})$ be the homomorphism corresponding to the principal unipotent orbit. Then $\delta_{\kappa}: \Gm\to \check{\HH}$ is the composition $\Gm\hook{}\SL_2\toup{prin} C(\check{\GG})\hook{} \check{\HH}$. 

If $m=n$ or $m=n+1$ then $\delta_{\kappa}$ is trivial.

\begin{Thm} 
\label{Sect_2.2.9_similitude_cM}
Assume we are in the situation of Section~\ref{Section_2.2.5_similitudes}. Then for any $n,m$ the complex $\cM\in\D^{\prec}(\Bun_{\GG,\HH})$ satisfies the Hecke property for $\kappa$.
\end{Thm}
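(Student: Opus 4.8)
The plan is to deduce Theorem~\ref{Sect_2.2.9_similitude_cM} from its non-similitude counterpart, (\cite{L1}, Theorem~4) stated above, using the fibration of $\Bun_{\GG\HH}$, $\Bun_{\GG}$ and $\Bun_{\HH}$ over $\Bun_1=\Pic X$ together with the descent of the Hecke property along $q\colon\Bun_{\GG\HH}\to\Bun_{\GG,\HH}$. Since the axioms $\cH$1)--$\cH$2) are multiplicative for the Satake fusion, it suffices to construct the isomorphisms $\beta_V$ and verify $\cH$1)--$\cH$2) for $V$ ranging over a set of tensor generators of $\Rep(\check\HH)$. As $\check\HH\iso\GSpin_{2m}$, we may take as generators: (a) the characters generating the image of the cocharacter lattice of the central torus $\Gm\subset\GSpin_{2m}$ lying over the similitude torus; (b) the pullback $st^*\cW$ of the standard representation $\cW=\Qlb^{2m}$ of $\SO_{2m}$ along $st\colon\check\HH\to\check H$; (c) one half-spin representation of $\GSpin_{2m}$, twisted by a character of type (a) so as to be a genuine representation. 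For $\GSp_4$ and $\GSO_6$ --- the case we ultimately need --- (b) and (c) collapse to the single rank-$4$ standard representation of $\GSpin_6$. (For $m\le n$ the roles of $\GG$ and $\HH$, and of $\check G$ and $\check H$, are interchanged throughout, and the argument is the same.)

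For the generators of type (b): the complex $\Aut_{\GG\HH}=\tilde\tau^*\Aut[\dim\Bun_{\GG\HH}-\dim\Bun_{G_{2nm}}]$ is, after base change to a point of $\Bun_1$ and up to the square-root twist carried by the $\mu_2$-gerbe $\Bunt_{G_{2nm}}$, the non-similitude kernel $\cM_0$ of Section~\ref{Section_2.2.2_theta} attached to the corresponding twisted forms of $G_n$ and $\SO_{2m}$, because $\tilde\tau$ is fibrewise over $\Bun_1$ the non-similitude lift $\tilde\tau_0\colon\Bun_{G_n}\times\Bun_{\SO_{2m}}\to\Bunt_{G_{2nm}}$. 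The proof of (\cite{L1}, Theorem~4) is local in nature and applies uniformly to these twisted forms, hence in families over $\Bun_1$, yielding an isomorphism $\H^{\la}_{\SO_{2m}}(\cW,\Aut_{\GG\HH})\iso\H^{\ra}_{G_n}(\Res^{\kappa_0}\cW,\Aut_{\GG\HH})$ together with the relative versions of $\cH$1)--$\cH$2), where $\kappa_0$ is the non-similitude homomorphism of Section~\ref{Sect_kappa_for_orth-sympl}. Using the commuting squares of Section~\ref{Sect_2.2.7_def_of_bar_kappa} with the surjections $st\colon\check\GG\to\check G$, $st\colon\check\HH\to\check H$, and the fact that over the relevant coweights the Hecke stack of a similitude group is a $\Gm$-gerbe-torsor over that of its isometry subgroup compatibly with the twisted external products $(K\tboxtimes\cS)^{l},(K\tboxtimes\cS)^{r}$, I would transport this into an isomorphism $\H^{\la}_{\HH}(st^*\cW,\Aut_{\GG\HH})\iso\H^{\ra}_{\GG}(\Res^{\kappa}(st^*\cW),\Aut_{\GG\HH})$ satisfying $\cH$1)--$\cH$2). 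Applying $q_!$ and invoking base change and the projection formula for the fibre-product square --- exactly as in the diagram (\ref{diag_one}) in the proof of Theorem~\ref{Thm_Hecke_propety_of_cM_implies}(ii), since $q$ is representable and compatible with both Hecke correspondences $\cH_{\GG}$, $\cH_{\HH}$ --- then descends this to the required $\beta_{st^*\cW}$ on $\cM=q_!\Aut_{\GG\HH}$, with $\cH$1)--$\cH$2) inherited from the non-similitude case.

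The generators of type (a) are treated directly: such a character acts on $\Bun_{\HH}$ by a line-bundle Hecke operator modifying $\cC$ at a point, which, since $\Bun_{\GG\HH}=\Bun_{\GG}\times_{\Bun_1}\Bun_{\HH}$ with $\cA=\cC^{-1}\otimes\Omega$, is intertwined with the corresponding modification of $\cA$ on the $\GG$-side; the isomorphism $\beta_V$ and $\cH$1)--$\cH$2) then follow from the evident equivariance of $\Aut$, hence of $\Aut_{\GG\HH}$ and $\cM$, for such line-bundle twists. For the half-spin generator of type (c), the key point is that $\bar\kappa=(i_{\kappa},\delta_{\kappa})$ sends it to a standard-type representation of $\check\GG=\GSpin_{2n+1}$ --- for $m=n+1$, $\delta_{\kappa}$ is trivial and the half-spin of $\GSpin_{2n+2}$ restricts to the spin (standard) representation of $\GSpin_{2n+1}$ --- so that $\beta_V$ becomes the assertion that the theta-kernel intertwines the standard Hecke operator of $\GSp_{2n}$ with the half-spin Hecke operator of $\GSO_{2m}$. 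I would prove this, and the attendant compatibilities $\cH$1)--$\cH$2), by the explicit Satake-level computation with $\Aut$ and $\tilde\tau$ over $\Gr_{\GG}$ and $\Gr_{\HH}$ underlying (\cite{L1}, Theorem~4), in its similitude form (carried out in \cite{L3}).

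The main obstacle is the bookkeeping of the $\mu_2$-gerbes and the square-root twists: one must check that the fibrewise identification of $\Aut_{\GG\HH}$ with $\cM_0$, and the passage between the similitude and isometry Hecke stacks, are compatible with the normalizations of the twisted external products, so that the commutativity of $\cH$1) and $\cH$2) genuinely descends from the non-similitude case without sign or gerbe discrepancies. Apart from that, the only step that is not a formal reduction is the half-spin case (c), which requires the explicit Satake computation for the similitude theta-sheaf rather than a reduction to \cite{L1}.
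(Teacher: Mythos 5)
Your proposal reaches the right statement but by a longer and riskier road than the paper. The paper's proof is almost entirely formal: the Hecke property of $\Aut_{\GG\HH}$ on $\Bun_{\GG\HH}$ for \emph{all} $V\in\Rep(\check{\HH})$ is already available as (\cite{L3}, Theorem~2.5.8), i.e. $\H^{\la}_{\HH}(V,\Aut_{\GG\HH})\,\iso\,\H^{\ra}_{\GG}(V,\Aut_{\GG\HH})$ over $\sqcup_a{^a\Bun_{\GG\HH}}$, and the only work done here is to transport this to $\cM=q_!\Aut_{\GG\HH}$ via two base-change diagrams relating $\cH_{\HH}\times_{\Bun_{\HH}}\Bun_{\GG\HH}$ and $\cH_{\GG}\times_{\Bun_{\GG}}\Bun_{\GG\HH}$ to the Hecke stacks over $\Bun_{\GG,\HH}$. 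You correctly identified this descent step (your ``apply $q_!$, base change and projection formula'' paragraph is exactly the content of the paper's proof), but you then try to re-derive the input upstairs from the non-similitude case (\cite{L1}, Theorem~4) by generators of $\Rep(\GSpin_{2m})$. Two points make that detour fragile. First, the fibrewise identification over $\Bun_1$ produces \emph{twisted} forms of $G_n$ and $\SO_{2m}$ for which \cite{L1} Theorem~4 is not stated; asserting its proof ``applies uniformly in families'' is precisely the gerbe/normalization bookkeeping you flag as the main obstacle, and it is not a citation but a re-proof. Second, for the half-spin generators you concede that no reduction to \cite{L1} exists and fall back on the similitude Satake computation ``carried out in \cite{L3}'' --- but that computation \emph{is} Theorem~2.5.8 of \cite{L3}, which already covers every $V$; once you invoke it, the generator analysis and the fibrewise reduction become superfluous. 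A small inaccuracy: for $m=3$, $n=2$ the rank-$6$ representation $st^*\cW$ and the rank-$4$ representation of $\GSpin_6$ do not ``collapse'' to one another; the former is a summand of $\wedge^2$ of the latter twisted by a central character, so they are distinct generators. None of this makes your argument unsalvageable, but the efficient proof is the one the paper gives: quote the full similitude Hecke property of $\Aut_{\GG\HH}$ from \cite{L3} and descend along $q$.
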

\begin{proof}
We derive this from (\cite{L3}, Theorem~2.5.8). We give the proof only in the case $m>n$, the case $m\le n$ is similar.

As in (\cite{L3}, Section~2.5.7), for $a\in\ZZ$ write $^a\Bun_{\GG\HH}$ for the stack classifying $(M,\cA)\in\Bun_{\GG}$, $(V,\cC,\gamma)\in\Bun_{\HH}$, $x\in X$, and an isomorphism $\cA\otimes\cC\,\iso\, \Omega(ax)$. 
We have the commutative diagram
$$
\begin{array}{ccccc}
\mathop{\sqcup}\limits_{a\in\ZZ} {^a\Bun_{\GG\HH}} & \getsup{\bar h^{\la}_{\HH}} & \cH_{\HH}\times_{\Bun_{\HH}} \Bun_{\GG\HH} & \toup{h^{\ra}_{\HH}\times\id} & \Bun_{\GG\HH}\\
\downarrow\lefteqn{\scriptstyle \bar q} && \downarrow && \downarrow\lefteqn{\scriptstyle q}\\
\Bun_{\GG,\HH}\times X & \getsup{h^{\la}_{\HH}\times\id\times\supp} & \cH_{\HH}\times \Bun_{\GG} & \toup{h^{\ra}_{\HH}\times\id} & \Bun_{\GG,\HH},
\end{array}
$$
where we used the map $h^{\ra}_{\HH}: \cH_{\HH}\to\Bun_{\HH}$ to define the fibred product $\cH_{\HH}\times_{\Bun_{\HH}} \Bun_{\GG\HH}$. We denoted by $\bar q$ the natural projection.
Here a point of $\cH_{\HH}\times_{\Bun_{\HH}} \Bun_{\GG\HH}$ is given by a collection $(x, V,\cC, V',\cC')\in \cH_{\HH}$ and $(M,\cA, V',\cC')\in\Bun_{\GG\HH}$. The map $\bar h^{\la}_{\HH}$ sends this collection to $(x, M,\cA, V,\cC)$. This gives an isomorphism for $V\in\Rep(\check{\HH})$
\begin{equation}
\label{iso_first_Rep(H)-action_Th_2.2.9}
\H^{\la}_{\HH}(V, \cM)\,\iso\, \bar q_!\H^{\la}_{\HH}(V, \Aut_{\GG\HH})
\end{equation}
By (\cite{L3}, Theorem~2.5.8),
$$
\H^{\la}_{\HH}(V, \Aut_{\GG\HH})\,\iso\, \H^{\ra}_{\GG}(V, \Aut_{\GG\HH})
$$ 
over $\mathop{\sqcup}\limits_{a\in\ZZ} {^a\Bun_{\GG\HH}}$. So, (\ref{iso_first_Rep(H)-action_Th_2.2.9}) identifies with $\bar q_!\H^{\ra}_{\GG}(V, \Aut_{\GG\HH})$. The diagram commutes
$$
\begin{array}{ccccc}
\Bun_{\GG\HH} & \getsup{h^{\la}_{\GG}\times\id} & \cH_{\GG}\times_{\Bun_{\GG}} \Bun_{\GG\HH} & \toup{\bar h^{\ra}_{\GG}} & \mathop{\sqcup}\limits_{a\in\ZZ} {^a\Bun_{\GG\HH}}\\
\downarrow\lefteqn{\scriptstyle q} && \downarrow && \downarrow\lefteqn{\scriptstyle \bar q}\\
\Bun_{\GG,\HH} & \getsup{h^{\la}_{\GG}} & \cH_{\GG}\times\Bun_{\HH} & \toup{h^{\ra}_{\GG}\times\id\times\supp} & \Bun_{\GG,\HH}\times X,
\end{array}
$$
where we used the map $h^{\la}_{\GG}: \cH_{\GG}\to\Bun_{\GG}$ to define the fibred product $\cH_{\GG}\times_{\Bun_{\GG}} \Bun_{\GG\HH}$. So, a point of $\cH_{\GG}\times_{\Bun_{\GG}} \Bun_{\GG\HH}$ is given by $(x, M,\cA, M',\cA')\in\cH_{\GG}$, $(M,\cA, V,\cC)\in \Bun_{\GG\HH}$. The map $\bar h^{\ra}_{\GG}$ sends this collection to $(x, M',\cA', V,\cC)$. This gives an isomorphism 
$$
\bar q_!\H^{\ra}_{\GG}(V, \Aut_{\GG\HH})\,\iso\, \H^{\ra}_{\GG}(V, \cM)
$$
Our claim follows. 
\end{proof}

\begin{Cor} Assume we are in the situation of Section~\ref{Section_2.2.5_similitudes}.
\begin{itemize}
\item If $m\le n$ then $F_{\GG}$ (resp., $F_{\HH}$) commutes with Hecke actions along $\bar\kappa$ (resp., backward $\kappa$).
\item If $m>n$ then $F_{\HH}$ (resp., $F_{\GG}$) commutes with Hecke actions along $\bar\kappa$ (resp., backward $\kappa$).
\end{itemize}
\end{Cor}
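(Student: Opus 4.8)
The plan is to deduce the corollary directly from Theorem~\ref{Thm_Hecke_propety_of_cM_implies}, exactly as the analogous corollary for the $\SO$--$\Sp$ dual pair was deduced from Theorem~\ref{Thm_Hecke_propety_of_cM_implies} above. The only input needed is that the kernel $\cM=q_!\Aut_{\GG\HH}\in\D^{\prec}(\Bun_{\GG,\HH})$ satisfies the Hecke property for the relevant homomorphism, and this is precisely the content of Theorem~\ref{Sect_2.2.9_similitude_cM}. So the substance of the argument is already in place; what remains is a transcription through the dictionary of Section~\ref{Sect_functoriality_first}, plus one small compatibility check.

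First I would fix that dictionary. For $m\le n$ the relevant map is $\bar\kappa:\check{\HH}\times\Gm\to\check{\GG}$, so I take the pair $(G,H)$ of Section~\ref{Sect_functoriality_first} to be $(\HH,\GG)$ and $\kappa=\bar\kappa$; then the functor $F_H$ of \emph{loc.\ cit.}\ is $F_{\GG}$ and $F_G$ is $F_{\HH}$, and Theorem~\ref{Thm_Hecke_propety_of_cM_implies} yields that $F_{\GG}$ commutes with the Hecke actions along $\bar\kappa$ while $F_{\HH}$ commutes with them backward. For $m>n$ one instead has $\bar\kappa:\check{\GG}\times\Gm\to\check{\HH}$, so $(G,H)=(\GG,\HH)$ and the same theorem gives that $F_{\HH}$ commutes along $\bar\kappa$ and $F_{\GG}$ backward. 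This is also the point to note that the straight-direction statements ($F_{\GG}$ for $m\le n$, $F_{\HH}$ for $m>n$) are already contained in \cite{L3}; only the backward-direction assertions are new, and they come from part ii) of Theorem~\ref{Thm_Hecke_propety_of_cM_implies}. Since $\cM\in\D^{\prec}(\Bun_{\GG,\HH})$ and the source categories are $\D^-(\Bun_{\GG})_!$, $\D^-(\Bun_{\HH})_!$, the functors land in $\D^{\prec}$ of the targets, so all iterated isomorphisms $\alpha_{V_1,\dots,V_n}$ and the associated commuting diagrams make sense as stated.

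Before invoking Theorem~\ref{Thm_Hecke_propety_of_cM_implies} one must check its standing hypothesis, namely that $\Res^{\bar\kappa}\comp\ast$ and $\ast\comp\Res^{\bar\kappa}$ agree as functors to $\Rep(\check{\GG}\times\Gm)$ (resp.\ $\Rep(\check{\HH}\times\Gm)$). Using the explicit description of $\bar\kappa=(i_\kappa,\delta_\kappa)$ recalled in Section~\ref{Sect_2.2.7_def_of_bar_kappa} --- its compatibility with the standard-representation surjections $st$ and with the maps $\kappa$ of Section~\ref{Sect_kappa_for_orth-sympl}, together with the description of $\delta_\kappa$ through a principal $\SL_2$ in the connected centralizer --- this reduces to the fact that a Chevalley involution of $\check{\HH}$ (resp.\ $\check{\GG}$) can be chosen to preserve $i_\kappa(\check{\GG})$ (resp.\ $i_\kappa(\check{\HH})$) and to induce there, up to conjugacy, a Chevalley involution times inversion on $\Gm$, compatibly with the principal $\SL_2$; in the cases $m=n$ or $m=n+1$ where $\delta_\kappa$ is trivial this is immediate. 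This $\ast$-bookkeeping is the only step requiring any argument, and it is routine; I do not expect a genuine obstacle. One then simply cites Theorem~\ref{Sect_2.2.9_similitude_cM} and Theorem~\ref{Thm_Hecke_propety_of_cM_implies} to conclude.
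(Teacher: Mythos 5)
Your proposal is correct and follows the paper's own (very short) proof exactly: the corollary is deduced by combining Theorem~\ref{Sect_2.2.9_similitude_cM} with Theorem~\ref{Thm_Hecke_propety_of_cM_implies}, with the straight direction also available from (\cite{L3}, Theorem~2.5.5). Your additional verification of the compatibility of $\ast$ with $\Res^{\bar\kappa}$ is a hypothesis the paper simply asserts holds ``in all the examples we are interested in,'' so spelling it out is consistent with, and slightly more careful than, the paper's argument.
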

\begin{proof} This follows from Theorem~\ref{Thm_Hecke_propety_of_cM_implies}. 
The straight direction is also established in (\cite{L3}, Theorem~2.5.5). 
\end{proof}

\begin{Rem} 
\label{Rem_kappa_for_Spin}
For future references, we record the following. Assume we are in the situation of Section~\ref{Sect_2.2.7_def_of_bar_kappa}. Write $st: \Spin_{2n+1}\to \SO_{2n+1}$ and $st: \Spin_{2m}\to \SO_{2m}$ for the standard representations of these groups. If $m\le n$ then the preimage of $\check{H}\hook{}\check{G}$ under $st: \Spin_{2n+1}\to \SO_{2n+1}=\check{G}$ identifies with $\Spin_{2n}$. This gives the embedding $\kappa: \Spin_{2m}\hook{}\Spin_{2n+1}$. 

 If $m>n$ then the preimage of $\check{G}\hook{} \check{H}$ under $st: \Spin_{2m}\to\SO_{2m}$ identifies with $\Spin_{2n+1}$. This gives the embedding $\kappa: \Spin_{2n+1}\hook{}\Spin_{2m}$.
\end{Rem}
 
\sssec{Theta-lifting for general linear groups} 
\label{Section_2.2.9}
Let $n,m\ge 1$. Set $G=\GL_n, H=\GL_m$. In this case the theta-lifting functors (\ref{functors_F_G_F_H}) are defined by $F_G=F_{m,n}, F_H=F_{n,m}$, where $F_{n,m}$ are given in (\cite{L1}, Definition~3). Recall this definition.

  Let $\cW_{n,m}$ be te stack classifying $L\in\Bun_n, U\in\Bun_m$ and a section $\cO_X\to L\otimes U$ on $X$. Let $q: \cW_{n,m}\to \Bun_n\times\Bun_m$ be the projection sending the above point to $(L, U)$. Let 
$$
\cI=\Qlb[\dim(\Bun_n\times\Bun_m)+a_{n,m}]\in \D^b(\cW_{n,m}),
$$ 
where $a_{n,m}$ is the function of a connected component of $\Bun_n\times\Bun_m$ sending $(L,U)$ to the Euler characteristics $\chi(X, L\otimes U)$. Set $\cM=q_!\cI\in \D^b(\Bun_{G,H})$. Define the functors (\ref{functors_F_G_F_H}) as in Section~\ref{Sect_functoriality_first} using the kernel $\cM$. 

\sssec{} Assume $m\ge n$. Define $\kappa=(i_{\kappa}, \delta_{\kappa}): \check{G}\times\Gm\to\check{H}$ as follows. 
Write $\cW$ for the standard representation of $\check{H}=\GL_m$. Pick a decomposition of vector spaces $\cW=\cW_1\oplus\cW_2$ with $\dim\cW_1=n$. So, $\GL(\cW_1)\times\GL(\cW_2)\subset \check{H}$ is a Levi subgroup. Define $i_{\kappa}: \GL_n\to \GL_m$ as the composition $\GL(\cW_1)\toup{\sigma}\GL(\cW_1)\hook{}\GL(\cW)$, where $\sigma(g)=(^tg)^{-1}$. We have denoted by $^tg$ the transpose of a matrix $g$.

Let $prin: \SL_2\to \GL(\cW_2)$ be a principal (irreducible) representation of $\SL_2$. Set $\kappa=(i_{\kappa}, \delta_{\kappa})$, where $\delta_{\kappa}$ the composition
$$
\Gm\hook{}\SL_2\toup{prin} \GL(\cW_2)\hook{}\GL(\cW)
$$
Here the first map is the standard maximal torus of diagonal matrices. 
\begin{Thm} 
\label{Thm_2.2.14_cM_Hecke_GL_pair}
Assume we are in the situation of Section~\ref{Section_2.2.9}. Then $\cM$ satisfies the Hecke property for $\kappa$.
\end{Thm}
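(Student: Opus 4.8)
The plan is to reduce the statement to a Hecke property of the kernel $\cI$ on the correspondence stack $\cW_{n,m}$, exactly in parallel with the proof of Theorem~\ref{Sect_2.2.9_similitude_cM} above. First I would set up the enlarged Hecke correspondence: for the $\check H=\GL_m$-action one forms $\cH_{H}\times_{\Bun_m}\cW_{n,m}$ using $h^{\ra}_H$, and one checks that there is a natural stack $^{\ast}\cW_{n,m}$ classifying $(L,U,U',\text{a modification }U\to U'\text{ at }x,\text{ a section }\cO_X\to L\otimes U')$ together with the two projections to $\cW_{n,m}$; similarly for the $\check G=\GL_n$-side one forms $\cH_G\times_{\Bun_n}\cW_{n,m}$ via $h^{\la}_G$. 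The key geometric point is that the modification $U\rightsquigarrow U'$ of the $\GL_m$-bundle at $x$, transported through the section $\cO_X\to L\otimes U$, corresponds precisely to a modification of $L$ at $x$ dual to the $\GL(\cW_1)\subset\GL(\cW)$ part of the Satake parameter, while the $\GL(\cW_2)$-part with its principal $\SL_2$ accounts for the cohomological shift $a_{n,m}$ built into $\cI$; concretely one analyses how $\chi(X,L\otimes U)$ changes under an elementary Hecke modification and matches it with the grading in $\D\Sph_{\GL_m}$ versus $\D\Sph_{\GL_n}$ via $\kappa$.

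Having set this up, I would establish the basic isomorphism $\H^{\la}_{H}(V,\cI)\,\iso\,\H^{\ra}_{G}(\Res^\kappa V,\cI)$ on the appropriate components of $^{\ast}\cW_{n,m}$ for $V\in\Rep(\check H)$; this is the analogue of (\cite{L1}, Theorem~4) and (\cite{L3}, Theorem~2.5.8), and in the $\GL$-case it is essentially a computation with the Satake category for $\GL$ on a space of sections, i.e. it follows from the classical formula expressing modifications of a vector bundle in terms of its Hecke eigen-property together with the explicit form of $\cI$ as a (shifted) constant sheaf. Then I would push this isomorphism down along $q$ (base change and projection formula, exactly as in the two displayed commutative diagrams in the proof of Theorem~\ref{Sect_2.2.9_similitude_cM}) to get $\H^{\la}_H(V,\cM)\,\iso\,\H^{\ra}_G(\Res^\kappa V,\cM)$, and finally verify compatibilities $\cH$1) and $\cH$2): the commutativity with $s$ follows from the commutativity of the fusion product in the Satake category, and the fusion constraint over $\triangle(X)$ follows from the compatibility of the iterated modifications on $^{\ast}\cW_{n,m}$ with the tensor structure, both being inherited from the corresponding properties on $\cW_{n,m}$ since $q$ is schematic and the constructions are functorial.

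The main obstacle I expect is the identification of the principal $\SL_2$-part: one must check that the cohomological shift encoded in $a_{n,m}$ (the Euler characteristic function) precisely reproduces, component by component, the grading coming from $\delta_\kappa:\Gm\hookrightarrow\SL_2\toup{prin}\GL(\cW_2)$, i.e. that the "extra" highest-weight vectors of $\Res^\kappa(V)$ corresponding to the non-trivial $\SL_2$-strings match the degrees in $\D\Sph_{\GL_n}[r]$ coming from how $\chi(X,L\otimes U)$ jumps. This is a bookkeeping argument but it is where the specific shape of the kernel $\cI$ (as opposed to an arbitrary kernel) is used in an essential way; once the degrees are matched, the rest is a transfer of the orthogonal–symplectic argument to the $\GL$-setting.
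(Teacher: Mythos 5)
Your overall architecture is the one the paper uses: exhibit the Hecke property of the kernel $\cI$ on an enlarged (ind-)stack where both the $\check H$- and $\check G$-modifications live, and then descend to $\cM=q_!\cI$ on $\Bun_{G,H}$ by base change and the projection formula, exactly as in the proof of Theorem~\ref{Sect_2.2.9_similitude_cM}. The one substantive divergence is how the intermediate isomorphism $\H^{\la}_H(V,\cI)\,\iso\,\H^{\ra}_G(V,\cI)$ is obtained. The paper does not prove it: it cites (\cite{L1}, Theorem~6), formulated on the ind-stack $_{\infty}\cW_{n,m}$ classifying $(x,L,U)$ with a section $\cO_X\to L\otimes U(\infty x)$ allowed arbitrary pole at $x$ (this, rather than your $^{\ast}\cW_{n,m}$ carrying the modification data and a section into $L\otimes U'$, is the common receptacle on which the two Hecke functors can be compared). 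You propose instead to re-derive this isomorphism from scratch, and that is where your plan is too thin: the matching of the $\GL(\cW_2)$-principal-$\SL_2$ grading with the jumps of $\chi(X,L\otimes U)$, and the identification of the $\GL(\cW_1)$-part with the dual modification of $L$, is precisely the content of the $(\GL_n,\GL_m)$ theta-lifting theorem of \cite{L1} and is not a routine bookkeeping step. If you invoke (\cite{L1}, Theorem~6) as the paper does, your descent argument and the verification of $\cH$1), $\cH$2) by functoriality of the diagrams go through and the proof is complete; as written, the "main obstacle" you flag is left unproved and is the whole difficulty.
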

\begin{proof}
We derive this from (\cite{L1}, Theorem~6). As in (\cite{L1}, Section~2.4), let $_{\infty}\cW_{n,m}$ be the stack classifying $x\in X$, $L\in\Bun_n, U\in\Bun_m$ and a section $t: \cO_X\to L\otimes U(\infty x)$, which is allowed to have any pole at $x$. This is an ind-algebraic stack. Let $\bar q: {_{\infty}\cW_{n,m}}\to \Bun_{G,H}\times X$ be the map forgetting $t$. The Hecke functors 
$$
\H^{\la}_G, \H^{\ra}_G:
\Rep(\check{\GG}\times\Gm)\times \D^{\prec}(\cW_{n,m})\to \D^{\prec}(_{\infty}\cW_{n,m})
$$
and
$$
\H^{\la}_H, \H^{\ra}_H:
\Rep(\check{\HH}\times\Gm)\times \D^{\prec}(\cW_{n,m})\to \D^{\prec}(_{\infty}\cW_{n,m})
$$
are defined in (\cite{L1}, Section~7). 
By (\cite{L1}, Theorem~6), for $V\in\Rep(\check{H})$ one has
$$
\H^{\la}_H(V, \cI)\,\iso\, \H^{\ra}_G(V, \cI)
$$
in $\D^{\prec}(_{\infty}\cW_{n,m})$. Now the argument analogous to our proof of Theorem~\ref{Sect_2.2.9_similitude_cM} gives the desired claim.
\end{proof}

\begin{Cor}
Assume we are in the situation of Section~\ref{Section_2.2.9} with $m\ge n$. Then $F_H$ (resp., $F_G$) commutes with Hecke actions along $\kappa$ (resp., backward $\kappa$).
\end{Cor}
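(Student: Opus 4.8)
The plan is to deduce the statement directly from the two structural results already in place. By Theorem~\ref{Thm_2.2.14_cM_Hecke_GL_pair}, the kernel $\cM=q_!\cI\in\D^b(\Bun_{G,H})$ associated to the pair $(\GL_n,\GL_m)$ with $m\ge n$ satisfies the Hecke property for $\kappa$. Granting this, Theorem~\ref{Thm_Hecke_propety_of_cM_implies} applies word for word: it asserts precisely that whenever $\cM\in\D^{\prec}(\Bun_{G,H})$ satisfies the Hecke property for $\kappa$, the forward lifting functor $F_H$ commutes with the Hecke functors along $\kappa$ and the backward lifting functor $F_G$ commutes with the Hecke functors backward $\kappa$. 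This is all that is claimed.

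Before invoking Theorem~\ref{Thm_Hecke_propety_of_cM_implies} I would check that the running hypothesis of Section~\ref{Sect_functoriality_first} holds here, namely that $\Res^{\kappa}\comp\ast$ and $\ast\comp\Res^{\kappa}$ agree as functors $\Rep(\check H)\to\Rep(\check G\times\Gm)$. Since $\ast$ is induced by the Chevalley involutions, it is $V\mapsto V^*$ on $\Rep(\GL_m)$ and likewise on $\Rep(\GL_n\times\Gm)$; as restriction of a representation along a homomorphism commutes with passage to the contragredient, the two composites coincide, so $\kappa_{ex}$ and the whole setup of Section~\ref{Sect_functoriality_first} are legitimate in this case.

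I do not anticipate a genuine obstacle: the real content lies in Theorem~\ref{Thm_2.2.14_cM_Hecke_GL_pair}, whose proof rests on (\cite{L1}, Theorem~6) via an argument parallel to the proof of Theorem~\ref{Sect_2.2.9_similitude_cM}, and in the proof of Theorem~\ref{Thm_Hecke_propety_of_cM_implies} itself, where the straight direction follows the derivation of (\cite{L1}, Theorem~3 part~1)) and the backward direction is carried out via the diagram~(\ref{diag_one}) using base change and the projection formula. The one point deserving a moment of care is that the symmetry and fusion compatibilities $\cH$1), $\cH$2) for $\cM$ get transported correctly into the corresponding constraints for $F_G$; but this is exactly what Theorem~\ref{Thm_Hecke_propety_of_cM_implies} delivers, so nothing beyond that general statement is needed. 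As in the analogous corollaries above, one may also remark that the straight direction --- $F_H$ commuting with Hecke along $\kappa$ --- is already contained in the results of \cite{L1}, the new content being the backward direction for $F_G$.
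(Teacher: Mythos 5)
Your proposal is correct and matches the paper's argument: the corollary is deduced by combining Theorem~\ref{Thm_2.2.14_cM_Hecke_GL_pair} with Theorem~\ref{Thm_Hecke_propety_of_cM_implies}, the straight direction also being contained in the results of \cite{L1} (the paper cites Theorem~5 there). Your extra verification of the compatibility of $\ast$ with $\Res^{\kappa}$ is a reasonable sanity check but not something the paper spells out.
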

\begin{proof}
This follows from Theorem~\ref{Thm_Hecke_propety_of_cM_implies}. The straight direction is also established in (\cite{L1}, Theorem~5).
\end{proof}

\ssec{Geometric Eisenstein series}

\sssec{} Let $H$ be a connected reductive group over $k$ with a given maximal torus and Borel subgroup. Let $G$ be a standard Levi subgroup of $H$, so we have the corresponding inclusion of dual groups $\check{G}\hook{}\check{H}$. Extend it to a map $\kappa: \check{G}\times\Gm\to\check{H}$ trivial on $\Gm$. Let $P\subset G$ be a standard parabolic subgroup with Levi quotient $M$. Consider the Drinfeld compactification $\Bunt_P$ defined in (\cite{BG}, Section~1.3.6). We have the diagram of projections $\Bun_M\getsup{\gq}\Bunt_P\toup{\gp}\Bun_G$. Let $q: \Bunt_P\to\Bun_M\times\Bun_G$ be the map $\gq\times\gp$, set $\cM=q_!\IC_{\Bunt_P}$. Define the functor $F_H$ using $\cM$ as in Section~\ref{Sect_functoriality_first}. 
\begin{Pp} The complex $\cM$ satisfies the Hecke property for $\kappa$.
\end{Pp}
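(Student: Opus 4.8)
The plan is to recast the claim as a reformulation of Braverman--Gaitsgory's theorem that geometric Eisenstein series commute with the Hecke functors, and then to follow the bookkeeping of the proof of Theorem~\ref{Sect_2.2.9_similitude_cM}. First I would unpack the statement. Here the pair $(G,H)$ of Section~\ref{Sect_functoriality_first} is played by $(M,G)$: the kernel $\cM=q_!\IC_{\Bunt_P}$ lives on $\Bun_M\times\Bun_G$, and the relevant homomorphism $\kappa$ is $\check M\times\Gm\to\check G$, equal to the inclusion of dual groups on $\check M$ and trivial on $\Gm$. So the claim is that for $V\in\Rep(\check G)$ one has functorial isomorphisms $\beta_V:\H^{\la}_G(V,\cM)\,\iso\,\H^{\ra}_M(\Res^{\kappa}(V),\cM)$ on $\Bun_M\times\Bun_G\times X$, with $\Res^{\kappa}(V)=V|_{\check M}$ carrying trivial $\Gm$-action, subject to $\cH$1) and $\cH$2). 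I would also observe that the hypothesis of Section~\ref{Sect_functoriality_first} on the interaction of $\ast$ with $\Res^{\kappa}$ is automatic in this case, since on each of $\Rep(\check G)$ and $\Rep(\check M\times\Gm)$ the functor $\ast$ agrees with the duality functor, and restriction commutes with duality.

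Next I would construct the $\beta_V$ exactly as in the proof of Theorem~\ref{Sect_2.2.9_similitude_cM}. One forms the two Hecke stacks $\cH_G\times_{\Bun_G}\Bunt_P$ (using $\gp$) and $\cH_M\times_{\Bun_M}\Bunt_P$ (using $\gq$) over $\Bunt_P$, and, by base change and the projection formula, expresses both $\H^{\la}_G(V,\cM)$ and $\H^{\ra}_M(\Res^{\kappa}V,\cM)$ in terms of the corresponding Hecke modifications of $\IC_{\Bunt_P}$ on $\Bunt_P$. The non-formal middle step is the Hecke property of Drinfeld's compactification, proved in \cite{BG}: a modification of the $G$-bundle at a point is transported, through the $P$-structure and the projection $\Bunt_P\to\Bun_M$, to a modification of the $M$-bundle, and on $\IC_{\Bunt_P}$ the two modifications are matched through the geometric Satake restriction functor $\Sph_G\to\Sph_M$; this functor is identified with $\Res^{\check M}_{\check G}$ by the constant-term computation in geometric Satake, and with \cite{BG}'s normalization of $\IC_{\Bunt_P}$ all cohomological shifts cancel, so that the $\Gm$-factor of $\kappa$ is trivial. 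This is the content behind \cite{BG}'s theorem that $\Eis^G_M=F_G$ commutes with Hecke functors along $\kappa$; passing from that statement to the Hecke property of the kernel $\cM$ is the manipulation carried out in part ii) of the proof of Theorem~\ref{Thm_Hecke_propety_of_cM_implies}.

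It then remains to check the compatibility diagrams $\cH$1) and $\cH$2). Both reduce to the fact that geometric Satake restriction $\Rep(\check G)\to\Rep(\check M)$ is symmetric monoidal, together with the compatibility of the isomorphisms $\beta_V$ with the factorizable local models (Zastava spaces) of $\Bunt_P$. Concretely, $\cH$2) uses the identification of $\Res^{\check M}_{\check G}(V_1)\otimes\Res^{\check M}_{\check G}(V_2)$ with $\Res^{\check M}_{\check G}(V_1\otimes V_2)$, realized geometrically as the fusion of two colliding modifications into one on $\Gr_G$ and $\Gr_M$; and $\cH$1) uses that this identification intertwines the commutativity constraints of $\Sph_G$ and $\Sph_M$, which is what renders the diagram symmetric under the $s$-involution of $X^2$. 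Both assertions are part of the factorization properties of $\IC_{\Bunt_P}$ established in \cite{BG}.

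I expect the genuine difficulty to lie entirely in the imported input --- the Hecke property of $\IC_{\Bunt_P}$, which rests on Braverman--Gaitsgory's local analysis of $\Bunt_P$ near its degenerate locus --- and not in the bookkeeping sketched above. On our side the only delicate points are to keep track of the left/right normalization of the Hecke functors, i.e. the $\ast$-twist of Section~\ref{Sect_def_Hecke_functors}, and to confirm that all cohomological shifts are absorbed into \cite{BG}'s normalization of $\IC_{\Bunt_P}$, so that $\kappa$ is indeed trivial on the $\Gm$-factor.
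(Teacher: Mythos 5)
Your proposal matches the paper's proof, which is exactly the one-line derivation you describe: the non-formal input is the Hecke property of $\IC_{\Bunt_P}$ from \cite{BG} (Corollary~4.1.7), and the passage from that statement to the Hecke property of the kernel $\cM=q_!\IC_{\Bunt_P}$ is the same base-change/projection-formula bookkeeping with the two commutative diagrams used in the proofs of Theorems~\ref{Sect_2.2.9_similitude_cM} and \ref{Thm_2.2.14_cM_Hecke_GL_pair}. Your identification of the roles of the groups and your treatment of $\cH$1), $\cH$2) via the monoidal/fusion compatibilities of \cite{BG} are consistent with what the paper intends, so no further comment is needed.
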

\begin{proof}
This is derived from (\cite{BG}, Corollary~4.1.7) as in our proof of Theorems~\ref{Sect_2.2.9_similitude_cM} and \ref{Thm_2.2.14_cM_Hecke_GL_pair}.
\end{proof}

\ssec{Liftings of Hecke eigen-sheaves}
\label{Sect_Liftings of Hecke eigen-sheaves}

In all the above cases of theta-lifting and geometric Eisenstein series we can derive from Corollary~\ref{Cor_2.1.8_partial_Hecke} the results like the following.
\begin{Cor} 
\label{Cor_2.4.1_Lifting_orth_sympl}
Assume we are in the situation of Section~\ref{Section_2.2.2_theta}. So, $n,m\ge 1$, $G=G_n$, $H=\SO_{2m}$. 

\smallskip\noindent
i) Let $m>n$, $x\in X$. Let $\sigma: \pi_1(X,x)\times\Gm\to \check{H}$ be a homomorphism. Let $K\in\D^-(\Bun_H)_!$ be a $\sigma$-Hecke eigen-sheaf. Then $F_G(K)$ satisfies the $\sigma$-Hecke propety with respect to $\kappa: \check{G}\times\Gm\to\check{H}$. 

\smallskip\noindent
ii) Let $m\le n$, $x\in X$. Let $\sigma: \pi_1(X,x)\times\Gm\to \check{G}$ be a homomorphism. Let $K\in\D^-(\Bun_G)_!$ be a $\sigma$-Hecke eigen-sheaf. Then $F_H(K)$ satisfies the $\sigma$-Hecke propety with respect to $\kappa: \check{H}\times\Gm\to \check{G}$. 
\end{Cor}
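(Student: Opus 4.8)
The plan is to deduce Corollary~\ref{Cor_2.4.1_Lifting_orth_sympl} directly from Corollary~\ref{Cor_2.1.8_partial_Hecke}(ii), so the argument is essentially a matter of unwinding which of the two groups plays the role of ``$H$'' (the source of the eigen-sheaf) and which plays the role of ``$G$'' (the target of the backward functor) in each regime. For part i), where $m>n$, the relevant homomorphism produced in Section~\ref{Sect_kappa_for_orth-sympl} is $\kappa\colon\check{G}\times\Gm\to\check{H}$, and by the Corollary following Theorem~\ref{Sect_2.2.9_similitude_cM} (i.e.\ the orthogonal-symplectic case, Corollary after Theorem~\cite{L1}, Theorem~4) the functor $F_G\colon\D_{nc}(\Bun_H)\to\D_{nc}(\Bun_G)$ commutes with the Hecke actions backward $\kappa$. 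Thus I would invoke Corollary~\ref{Cor_2.1.8_partial_Hecke}(ii) with the groups named there as $G\rightsquigarrow G_n$, $H\rightsquigarrow\SO_{2m}$, and the homomorphism $\kappa$ as above: since $K\in\D^-(\Bun_H)_!$ is a $\sigma$-Hecke eigen-sheaf for $\sigma\colon\pi_1(X,x)\times\Gm\to\check{H}$, the conclusion is that $F_G(K)\in\D^{\prec}(\Bun_G)$ is equipped with the $\sigma$-Hecke property with respect to $\kappa$, which is exactly the assertion of part i).

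For part ii), where $m\le n$, the roles are reversed: now $\kappa\colon\check{H}\times\Gm\to\check{G}$ is the homomorphism of Section~\ref{Sect_kappa_for_orth-sympl}, and $\check{G}=\SO_{2n+1}$ is the larger group. Here the Corollary after Theorem~\cite{L1}, Theorem~4 tells us that $F_H\colon\D_{nc}(\Bun_G)\to\D_{nc}(\Bun_H)$ commutes with Hecke actions backward $\kappa$ (in the notation of Definition~\ref{Def_2.1.6}, the pair there is $(G,H)\rightsquigarrow(\SO_{2m},G_n)$, so $F_G$ of that definition is our $F_H$). So I would apply Corollary~\ref{Cor_2.1.8_partial_Hecke}(ii) with its $G$ taken to be our $G_n$ and its $H$ taken to be our $\SO_{2m}$, and with the backward functor being $F_H$: given a $\sigma$-Hecke eigen-sheaf $K\in\D^-(\Bun_G)_!$ for $\sigma\colon\pi_1(X,x)\times\Gm\to\check{G}$, one obtains that $F_H(K)\in\D^{\prec}(\Bun_H)$ carries the $\sigma$-Hecke property with respect to $\kappa\colon\check{H}\times\Gm\to\check{G}$, which is the claim.

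The only genuine content beyond citing the two earlier results is bookkeeping: one must check that the $F_G$/$F_H$ defined in Section~\ref{Section_2.2.2_theta} via the theta-kernel $\cM=\tilde\tau^*\Aut[\cdots]$ is literally the functor to which Corollary~\ref{Cor_2.1.8_partial_Hecke} applies — but this is immediate, since $\cM$ satisfies the Hecke property for $\kappa$ by Theorem~\cite{L1}, Theorem~4, hence Theorem~\ref{Thm_Hecke_propety_of_cM_implies} gives that the associated backward functor commutes with the Hecke actions backward $\kappa$, which is the hypothesis of Corollary~\ref{Cor_2.1.8_partial_Hecke}(ii). One should also note the finiteness: $K\in\D^-(\Bun_H)_!$ (resp.\ $\D^-(\Bun_G)_!$) is assumed in the statement precisely so that the functor $F_G$ (resp.\ $F_H$) of Section~\ref{Section_2.2.2_theta}, a priori defined on $\D^-(-)_!$, can be evaluated on $K$ and lands in $\D^{\prec}$. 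I expect no real obstacle here — the main point to get right is simply the matching of dual-group sides, i.e.\ that in the $m\le n$ regime it is $F_H$ (not $F_G$) that realizes the ``backward'' direction for $\kappa\colon\check H\times\Gm\to\check G$, and symmetrically for $m>n$.

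\begin{proof}
This is a direct consequence of Corollary~\ref{Cor_2.1.8_partial_Hecke}(ii) together with the Corollary following Theorem~\cite{L1}, Theorem~4 (the orthogonal-symplectic case).

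i) Assume $m>n$. By that Corollary, the functor $F_G\colon\D_{nc}(\Bun_H)\to\D_{nc}(\Bun_G)$ commutes with the Hecke actions backward $\kappa$, where $\kappa\colon\check{G}\times\Gm\to\check{H}$ is the homomorphism of Section~\ref{Sect_kappa_for_orth-sympl}. Since $K\in\D^-(\Bun_H)_!$ is a $\sigma$-Hecke eigen-sheaf for the homomorphism $\sigma\colon\pi_1(X,x)\times\Gm\to\check{H}$, Corollary~\ref{Cor_2.1.8_partial_Hecke}(ii) (applied with the groups $G_n$ and $\SO_{2m}$ in the roles of $G$ and $H$ respectively) shows that $F_G(K)\in\D^{\prec}(\Bun_G)$ is naturally equipped with the $\sigma$-Hecke property with respect to $\kappa$.

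ii) Assume $m\le n$. Now $\kappa\colon\check{H}\times\Gm\to\check{G}$ is the homomorphism of Section~\ref{Sect_kappa_for_orth-sympl}, and by the same Corollary the functor $F_H\colon\D_{nc}(\Bun_G)\to\D_{nc}(\Bun_H)$ commutes with the Hecke actions backward $\kappa$; in the notation of Definition~\ref{Def_2.1.6} this is the backward functor for the pair $(\SO_{2m},G_n)$. Since $K\in\D^-(\Bun_G)_!$ is a $\sigma$-Hecke eigen-sheaf for $\sigma\colon\pi_1(X,x)\times\Gm\to\check{G}$, Corollary~\ref{Cor_2.1.8_partial_Hecke}(ii) gives that $F_H(K)\in\D^{\prec}(\Bun_H)$ is naturally equipped with the $\sigma$-Hecke property with respect to $\kappa\colon\check{H}\times\Gm\to\check{G}$.
\end{proof}
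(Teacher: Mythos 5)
Your proposal is correct and is exactly the derivation the paper intends: the corollary following \cite{L1}, Theorem~4 (via Theorem~\ref{Thm_Hecke_propety_of_cM_implies}) identifies which of $F_G$, $F_H$ is the backward functor in each regime, and Corollary~\ref{Cor_2.1.8_partial_Hecke}(ii) then gives the partial Hecke property. The only blemish is a role-swap slip in your informal plan for part ii) (``its $G$ taken to be our $G_n$ and its $H$ to be our $\SO_{2m}$'' should be the reverse, since in Definition~\ref{Def_2.1.6} the target of the backward functor plays the role of $G$), but your formal proof environment gets this right.
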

\begin{Cor} 
\label{Cor_2.4.2_Lifting_similitudes}
Assume we are in the situation of Section~\ref{Section_2.2.5_similitudes}. So, $n, m\ge 1$ and $\GG=\GSp_{2n}$, $\HH=\GSO_{2m}$. 

\smallskip\noindent
i) Let $m>n$, $x\in X$. Let $\sigma: \pi_1(X,x)\times\Gm\to \check{H}$ be a homomorphism. Let $K\in\D^-(\Bun_H)_!$ be a $\sigma$-Hecke eigen-sheaf. Then $F_G(K)$ satisfies the $\sigma$-Hecke propety with respect to $\kappa: \check{G}\times\Gm\to\check{H}$. 

\smallskip\noindent
ii) Let $m\le n$, $x\in X$. Let $\sigma: \pi_1(X,x)\times\Gm\to \check{G}$ be a homomorphism. Let $K\in\D^-(\Bun_G)_!$ be a $\sigma$-Hecke eigen-sheaf. Then $F_H(K)$ satisfies the $\sigma$-Hecke propety with respect to $\kappa: \check{H}\times\Gm\to \check{G}$. 
\end{Cor}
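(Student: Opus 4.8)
The plan is to deduce both parts as a purely formal consequence of the package already assembled in Section~\ref{Sect_functoriality}, with all the nontrivial input supplied by the geometric theta-lifting. Concretely, I would first invoke Theorem~\ref{Sect_2.2.9_similitude_cM}: the kernel $\cM=q_!\Aut_{\GG\HH}\in\D^{\prec}(\Bun_{\GG,\HH})$ constructed in Section~\ref{Section_2.2.5_similitudes} satisfies the Hecke property for $\bar\kappa$, for all $n,m\ge 1$. Here one also records that the compatibility $\ast\circ\Res^{\bar\kappa}\simeq\Res^{\bar\kappa}\circ\,\ast$ demanded in Section~\ref{Sect_functoriality_first} does hold: this is one of the examples referred to there, and it is visible from the description in Section~\ref{Sect_2.2.7_def_of_bar_kappa}, where $\bar\kappa$ is built from an $\SL_2$-of-Arthur via $prin$. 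Granting this, Theorem~\ref{Thm_Hecke_propety_of_cM_implies}~ii) applies and yields that the backward lifting functor $F_{\GG}\colon\D_{nc}(\Bun_{\HH})\to\D_{nc}(\Bun_{\GG})$ (in the range $m>n$), resp.\ $F_{\HH}\colon\D_{nc}(\Bun_{\GG})\to\D_{nc}(\Bun_{\HH})$ (in the range $m\le n$), commutes with the Hecke functors backward $\bar\kappa$, in the sense of Definition~\ref{Def_2.1.6}.

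It then remains to feed an eigen-sheaf into this machine. For part i), with $m>n$: given $\sigma\colon\pi_1(X,x)\times\Gm\to\check{\HH}$ and a $\sigma$-Hecke eigen-sheaf $K\in\D^-(\Bun_{\HH})_!$, I would apply Corollary~\ref{Cor_2.1.8_partial_Hecke}~ii) with $(G,H,\kappa)$ there taken to be $(\GG,\HH,\bar\kappa)$. Composing the structure isomorphisms $\H^{\la}_{\GG}(V,F_{\GG}(K))\iso F_{\GG}\H^{\la}_{\HH}(V,K)$ coming from Definition~\ref{Def_2.1.6} with $F_{\GG}$ applied to the eigen-isomorphisms $\H^{\la}_{\HH}(V,K)\iso K\boxtimes E^V[1]$ gives the required family $\alpha_V\colon\H^{\la}_{\GG}(V,F_{\GG}(K))\iso F_{\GG}(K)\boxtimes E^V[1]$, and the diagrams H1), H2) of Definition~\ref{Def_1.1.2} commute because the diagrams H''1), H''2) for $F_{\GG}$ do and H1), H2) for $K$ do; since $K\in\D^-(\Bun_{\HH})_!$, the output $F_{\GG}(K)$ lies in $\D^{\prec}(\Bun_{\GG})$ as claimed. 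Part ii), with $m\le n$, is the same argument with the roles of $\GG$ and $\HH$ interchanged, using $F_{\HH}$ and $\bar\kappa\colon\check{\HH}\times\Gm\to\check{\GG}$.

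I do not expect a genuine obstacle: every ingredient is already available, and the only points needing care are bookkeeping. One must match, in each of the two ranges $m>n$ and $m\le n$, the direction of $\bar\kappa$ and the source/target of the lifting functor against the conventions of Definition~\ref{Def_2.1.6} and Corollary~\ref{Cor_2.1.8_partial_Hecke}; and one should check that the twisted external products and normalizing shifts entering the definitions of $F_{\GG},F_{\HH}$ in Section~\ref{Sect_functoriality_first} are exactly those implicit in Theorem~\ref{Sect_2.2.9_similitude_cM}, so that the isomorphism $\beta_V$ furnished by the Hecke property of $\cM$ is literally the datum that Theorem~\ref{Thm_Hecke_propety_of_cM_implies}~ii) consumes. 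If fuller self-containedness were wanted, one could reproduce, as in the proof of Theorem~\ref{Sect_2.2.9_similitude_cM}, the base-change and projection-formula identification $F_{\GG}\H^{\la}_{\HH}(V,K)\iso\H^{\la}_{\GG}(V,F_{\GG}(K))$ by hand; but since this is precisely the content of Theorem~\ref{Thm_Hecke_propety_of_cM_implies}~ii), citing it is enough.
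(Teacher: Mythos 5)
Your proposal is correct and follows exactly the route the paper intends: the paper gives no separate proof for this corollary beyond the remark opening Section~\ref{Sect_Liftings of Hecke eigen-sheaves} that it is derived from Corollary~\ref{Cor_2.1.8_partial_Hecke}, combined with Theorem~\ref{Sect_2.2.9_similitude_cM} and Theorem~\ref{Thm_Hecke_propety_of_cM_implies}. Your spelled-out chain (kernel Hecke property $\Rightarrow$ backward commutation $\Rightarrow$ partial Hecke property of the lift), including the correct reading of the statement's $G,H$ as the similitude groups $\GG,\HH$ with $\bar\kappa$, matches the paper's argument.
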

  
\section{More results} 
\label{Sect_Extending the Hecke property}

In Secton~\ref{Sect_Extending the Hecke property} we formulate 
our main results that have not appeared in Section~\ref{Sect_functoriality}. They are
related to the extension of a partial Hecke property and the construction of new automorphic sheaves. Their proofs are collected in Section~\ref{Sect_Proofs}. 

\ssec{Extending the Hecke property}Ê
\label{Sect_Extending_Hecke_3.1}

\sssec{} In Appendix~\ref{Appendix_B} we study the following question. Given an inclusion of connected reductive groups $\kappa: \check{G}\hook{}\check{H}$ over $\Qlb$, an abelian category $\cC$ with a $\Rep(\check{G})$-action, and a $\Rep(\check{H})$-Hecke eigen-object $c$ of $\cC$, can one extend this partial Hecke property to a $\Rep(\check{G})$-Hecke property of $c$? The answer in some sense is given by Proposition~\ref{Pp_A.2.3}. The results of this section are inspired by Proposition~\ref{Pp_A.2.3}.

\begin{Pp} 
\label{Pp_2.2.1} Let $G,H$ be connected reductive groups over $k$, $\kappa: \check{G}\hook{}\check{H}$ be a closed subgroup, $E_{\check{G}}$ be a $\check{G}$-local system on $X$, $E$ be the $\check{H}$-local system on $X$ induced via $\kappa$. Assume for any irreducible representation $V^{\lambda}$ of $\check{G}$ there is $W\in\Rep(\check{H})$ such that $V^{\lambda}$ appears in $\Res^{\kappa}(W)$ with multiplicity one. Let $K\in\D^{\prec}(\Bun_G)$ be equipped with a $E$-Hecke property with respect to $\kappa$. 
(It is understood that $\kappa$ is extended to a morphism $\check{G}\times\Gm\to \check{H}$ trivial on $\Gm$). Then there could exist at most one extension of this structure to a structure of a $E_{\check{G}}$-Hecke eigen-sheaf on $K$. 
\end{Pp}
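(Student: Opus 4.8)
The plan is to show that the $E_{\check{G}}$-Hecke eigen-structure is rigid: the isomorphisms $\alpha_{V^\lambda}\colon \H^{\la}_G(V^\lambda,K)\iso K\boxtimes E^{V^\lambda}_{\check{G}}[1]$ for irreducible $V^\lambda\in\Rep(\check{G})$ are forced by the given $E$-Hecke structure with respect to $\kappa$, up to a scalar that is itself pinned down by compatibility. Concretely, suppose we have two extensions, giving for each irreducible $V^\lambda$ two isomorphisms $\alpha_{V^\lambda}, \alpha'_{V^\lambda}$; I want to show they coincide. First I would reduce to comparing the scalars $c_\lambda = \alpha'_{V^\lambda}\circ \alpha_{V^\lambda}^{-1}\in \Qlb^\times$ acting on $K\boxtimes E^{V^\lambda}_{\check{G}}[1]$ — here one must check that $\End(K\boxtimes E^{V^\lambda}_{\check{G}}[1])$ contains $\Qlb\cdot\id$ as the relevant piece (using that $E^{V^\lambda}_{\check{G}}$ is an irreducible local system, since $E_{\check{G}}$ is a $\check{G}$-local system and $V^\lambda$ is irreducible, so $\End$ of it is $\Qlb$; one should be careful that $K$ itself need not be simple, but the comparison is genuinely of the factor $\alpha$'s, and the fusion/multiplicativity constraints below will force $c_\lambda$ to be a scalar anyway).

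Next, the compatibility conditions H1), H2) from Definition~\ref{Def_1.1.2} imply multiplicativity of the $c_\lambda$ in the appropriate sense: from H2) (the fusion diagram at the diagonal), if $V^\lambda$ appears in $V^{\lambda_1}\otimes V^{\lambda_2}$ then $c_\lambda$ is determined by $c_{\lambda_1}$ and $c_{\lambda_2}$; more precisely the full family $(c_\lambda)$ assembles into an automorphism of the symmetric monoidal functor $\Rep(\check{G})\to \D\!\Loc_X$ underlying the eigen-structure, hence (by Tannakian reconstruction) corresponds to a point of the centralizer/automorphism group — an element of $\check{G}$ acting through the local system, or rather an element of the group of tensor automorphisms, which is abelian and for which the constraint from $\kappa$ will cut things down. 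The key leverage is the hypothesis: for each $\lambda$ there is $W\in\Rep(\check{H})$ with $V^\lambda\subset \Res^\kappa(W)$ of \emph{multiplicity one}. Then $\alpha_W$ (given, part of the $E$-Hecke structure w.r.t. $\kappa$, hence the \emph{same} for both extensions) restricts along the unique $V^\lambda$-isotypic summand to $\alpha_{V^\lambda}$, and the multiplicity-one hypothesis means this summand is canonically split off, so $\alpha_{V^\lambda}$ is uniquely recovered from $\alpha_W$. This is the heart of the argument.

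Thus the order of steps: (1) set up the scalar $c_\lambda$ comparing two putative extensions on each irreducible, using irreducibility of $E^{V^\lambda}_{\check{G}}$; (2) invoke the multiplicity-one hypothesis to express $\alpha_{V^\lambda}$ in terms of the $\kappa$-part datum $\alpha_W$, which is common to both extensions, via the canonical projection $\Res^\kappa(W)\twoheadrightarrow V^\lambda$ onto the multiplicity-one isotypic component (and the compatibility of $\alpha$ with direct sums / subquotients in $\Rep(\check{H})$ that is part of Definition~\ref{Def_1.1.2} applied to decomposable $W$, together with H2) to pass from $\Res^\kappa(W)$ to its summands); (3) conclude $c_\lambda=1$ for all $\lambda$, hence the two extensions agree on all irreducibles, hence (since every $V\in\Rep(\check{G})$ is built from irreducibles and the $\alpha_V$ are determined by naturality and additivity from the $\alpha_{V^\lambda}$) on all of $\Rep(\check{G})$, giving uniqueness.

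The main obstacle I expect is step (2): making precise how the $E$-Hecke structure with respect to $\kappa$ — which a priori only supplies $\alpha_W$ for $W\in\Rep(\check{H})$ and the fusion constraints among those — actually pins down $\alpha_{V^\lambda}$ for the summand $V^\lambda\subset \Res^\kappa(W)$. One needs that $\alpha_W$ is compatible with the decomposition of $\Res^\kappa(W)$ into $\check{G}$-isotypic pieces in a way that is internal to the given structure (not extra data), and that the multiplicity-one condition is exactly what removes the ambiguity of a choice of embedding $V^\lambda\hookrightarrow\Res^\kappa(W)$. A secondary subtlety is ensuring the argument does not secretly require $K$ to be irreducible or the ambient $\End$ to be one-dimensional; the resolution is that we compare two structures on the \emph{same} $K$, so we only ever compare $\alpha$'s, and the relevant self-maps are constrained by the fusion diagrams to be compatible with a tensor automorphism of the local-system functor, which (via Tannaka and the centrality forced by $\kappa$ surjecting enough) must be trivial once it is trivial on a generating set — and the generating set is handled by multiplicity one.
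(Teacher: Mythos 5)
Your step (2) is the paper's entire proof, and it suffices on its own: to prove \emph{uniqueness} you may assume an extension to a $E_{\check{G}}$-Hecke structure exists, and then functoriality of the extended $\alpha$ in $V\in\Rep(\check{G})$ forces $\alpha_W=\alpha_{\Res^{\kappa}(W)}$ to be diagonal with respect to the decomposition $\Res^{\kappa}(W)\,\iso\, V^{\lambda}\oplus V'$ (with $V^{\lambda}$ not occurring in $V'$); its $V^{\lambda}$-component is $\alpha_{V^{\lambda}}$ tensored with the identity of the one-dimensional multiplicity space, so $\alpha_{V^{\lambda}}$ is recovered from $\alpha_W$, which is part of the given data and hence common to both extensions. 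This determines $\alpha_V$ for all $V$ by additivity, and H1), H2) are properties rather than data. The ``main obstacle'' you flag therefore dissolves: you do not need the diagonality of $\alpha_W$ to be internal to the given $\kappa$-structure; it is a consequence of the assumed extension (the paper records it, conversely, as a necessary condition for an extension to exist).

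Your steps (1) and (3) are unnecessary and contain a genuine error. The claim that $E^{V^{\lambda}}$ is an irreducible local system because $V^{\lambda}$ is irreducible is false in general: a $\check{G}$-local system is a tensor functor $\Rep(\check{G})\to\D\!\Loc_X$ whose monodromy group may be a proper subgroup of $\check{G}$ (for the trivial $E_{\check{G}}$, every $E^{V^{\lambda}}$ is trivial of rank $\dim V^{\lambda}$), so $\End(E^{V^{\lambda}})$ need not be $\Qlb$; moreover $\End(K\boxtimes E^{V^{\lambda}}[1])$ involves $\End(K)$, which you cannot control. Consequently the ``scalar'' $c_{\lambda}$ and the Tannakian reconstruction layer are not available as stated — but none of this is needed once step (2) is carried out, since the two extensions are then compared directly as isomorphisms via the common datum $\alpha_W$ rather than via scalars.
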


\begin{Rem}
\label{Rem_actual_embeddings_one}
The assumptions of Proposition~\ref{Pp_2.2.1} are satisfied for the following embeddings $\kappa$:
\begin{itemize}
\item[A1)] $\GL_{n-1}\hook{}\GL_n$ given as the subgroup of matrices of the form 
$\left(
\begin{array}{cc}
y & 0\\
0 & 1
\end{array}
\right)
$. 
\item[A2)] for $n\ge 2$ the inclusion $\Spin_{2n-1}\hook{}\Spin_{2n}$ of Remark~\ref{Rem_kappa_for_Spin}.
\item[A3)] for $n\ge 2$ the inclusion $\Spin_{2n}\hook{}\Spin_{2n+1}$ 
of Remark~\ref{Rem_kappa_for_Spin}.
\item[A4)] for $n\ge 2$ the inclusion $\GSpin_{2n-1}\hook{}\GSpin_{2n}$ given in Section~\ref{Sect_2.2.7_def_of_bar_kappa}.
\item[A5)] for $n\ge 1$ the natural inclusion $\Sp_{2n}\hook{}\SL_{2n}$. 
\item[A6)] let $\check{G}, \check{G}_1$ be connected reductive groups over $\Qlb$ with a given homomorphism $\check{G}\to \check{G}_1$, write $\kappa: \check{G}\hook{}\check{G}\times\check{G}_1$ for its graph. 
\end{itemize}
\end{Rem}

 
  We give an example where the $E$-Hecke property of $K$ with respect to $\kappa$ in Proposition~\ref{Pp_2.2.1} does not extend to a structure of a $E_{\check{G}}$-Hecke property (cf. Remark~\ref{Rem_counterexample}). One can always twist the $E$-Hecke property of $K$ by an element of the center of $\check{H}$ as in  Remark~\ref{Rem_counterexample}. If the original $E$-Hecke property of $K$ extends to a $E_{\check{G}}$-Hecke property, this is not necessarily the case after this twisting. 
  
\begin{Pp} 
\label{Pp_2.2.3}
In the situation of Proposition~\ref{Pp_2.2.1} assume one of the following:
\begin{itemize}
\item[1)] $\kappa: \check{G}\hook{}\check{H}$ is the inclusion A1) with $n\ge 2$. View $E_{\check{G}}$ as a rank $n-1$ local system $E_0$ on $X$. Assume 
$$
\H^0(X, E_0)=\H^0(X, E_0^*)=0
$$
\item[2)] $\kappa: \check{G}\hook{}\check{H}$ is the inclusion A6).
\item[3)] $\kappa: \check{G}\hook{}\check{H}$ is the inclusion $\Sp_{2n}\hook{} \SL_{2n}$. View $E_{\check{G}}$ as a rank $2n$ local system $E$ on $X$ with a symplectic form $\omega: \wedge^2 E\to \Qlb$. Let $\cW=\Ker\omega$. Assume 
$$
\H^0(X,\cW)=\H^0(X, \cW^*)=0
$$
\item[4)] Let $G=\GSp_4$, $H=\GSO_6$ split, and $\kappa: \check{G}\hook{}\check{H}$ be the inclusion A4) for $n=3$. View $E_{\check{G}}$ as a pair $(E,\chi)$, where $E$ (resp., $\chi$) is a rank 4 (resp., rank one) local system on $X$ with a symplectic form $\omega: \wedge^2 E\to \chi$. Set $\cW=\Ker\omega$. Assume 
$$
\H^0(X, \cW\otimes\chi^{-1})=\H^0(X, \cW^*\otimes\chi)=0
$$
\end{itemize}
In cases 1), 2) the complex $K\in\D^{\prec}(\Bun_G)$ admits a natural $E_{\check{G}}$-Hecke property (which does not necessarily extend the $E$-Hecke property of $K\in\D^{\prec}(\Bun_G)$ with respect to $\kappa$). 

In case 3) there is a direct sum decomposition 
$$
K\,\iso\,\mathop{\oplus}\limits_{a\in\mu_n} \, K_a
$$ 
in $\D^{\prec}(\Bun_G)$ compatible with the $E$-Hecke property of $K$ with respect to $\kappa$. For each $a\in\mu_n$, $K_a$ can be naturally equipped with a $E_{\check{G}}$-Hecke property.

In case 4) there is a direct sum decomposition 
$$
K\,\iso\,\mathop{\oplus}\limits_{a\in\mu_2}\,  K_a
$$ 
in $\D^{\prec}(\Bun_G)$ compatible with the $E$-Hecke property of $K$ with respect to $\kappa$. For each $a\in\mu_2$, $K_a$ can be naturally equipped with a $E_{\check{G}}$-Hecke property.
\end{Pp}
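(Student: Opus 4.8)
The plan is to handle the four cases by the same mechanism, so let me describe the common strategy and then the case-specific inputs. In each case we have $K\in\D^{\prec}(\Bun_G)$ with an $E$-Hecke property with respect to $\kappa:\check G\hookrightarrow\check H$, and we want to promote it to an $E_{\check G}$-Hecke eigen-sheaf structure (possibly after a direct-sum decomposition). The key observation is that the $E$-Hecke property gives us the isomorphisms $\alpha_V$ only for $V=\Res^\kappa(W)$ with $W\in\Rep(\check H)$, but the hypothesis of Proposition~\ref{Pp_2.2.1} guarantees that every irreducible $V^\lambda\in\Rep(\check G)$ appears with multiplicity one inside some such $\Res^\kappa(W)$. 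So the first step is to \emph{define} the candidate isomorphism $\alpha_{V^\lambda}\colon \H^{\la}_G(V^\lambda,K)\,\iso\, K\boxtimes E_{\check G}^{V^\lambda}[1]$ by projecting the known $\alpha_W$ onto the $V^\lambda$-isotypic summand, using the multiplicity-one property to see the projection is canonical; then extend to all of $\Rep(\check G)$ additively. Proposition~\ref{Pp_2.2.1} already tells us such an extension is unique if it exists, so the entire content is \emph{existence}, i.e.\ checking that these $\alpha_{V^\lambda}$ satisfy the compatibilities H1), H2) of Definition~\ref{Def_1.1.2} (commutation with the braiding $s$ and with the fusion on the diagonal).

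For the existence/compatibility check I would use the structure of $\check H$ as $\check G$ times a complement carrying a principal $\SL_2$. In cases A6) (part 2) this is cleanest: $\check H=\check G\times\check G_1$ with $\kappa$ the graph, so $\Res^\kappa(W_1\boxtimes W_2)=W_1\otimes(\text{triv-rep of fixed dimension }\dim W_2)$; taking $W_2$ trivial one-dimensional recovers every $V^\lambda$ directly and the Hecke property transfers verbatim — here one should simply observe that the given $E$-Hecke property restricted to $\check G\times\{1\}$ already \emph{is} an $E_{\check G}$-Hecke property, modulo identifying $E^{W}$ with $E_{\check G}^{W|_{\check G}}$ tensored with a constant local system, which is where the cohomology-vanishing hypotheses are not even needed. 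For case 1) (the inclusion $\GL_{n-1}\hookrightarrow\GL_n$), the role of the complement is played by the one-dimensional piece; the vanishing $\H^0(X,E_0)=\H^0(X,E_0^*)=0$ is what kills the potential obstruction/ambiguity coming from the extension classes relating $\Res^\kappa(\wedge^k \text{std}_n)$ to the $\wedge^j\text{std}_{n-1}$: concretely, the standard representation of $\check H=\GL_n$ restricts to $\mathrm{std}_{n-1}\oplus\mathbf 1$, and iterating one gets the $\alpha$'s for all $\wedge^j$; the vanishing guarantees that the "mixing terms" $E_0\otimes\H^\bullet$ and $E_0^*\otimes\H^\bullet$ that would otherwise obstruct H2) on the diagonal actually vanish, so the naive projection-defined $\alpha_{V^\lambda}$ are forced to be compatible.

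For cases 3) and 4) — $\Sp_{2n}\hookrightarrow\SL_{2n}$ and $\GSp_4\hookrightarrow\GSO_6$ via A4) with $n=3$ — there is an extra feature: the centralizer of $\check G$ in $\check H$ is finite but nontrivial ($\mu_n$, resp.\ $\mu_2$), which forces a decomposition $K\cong\bigoplus_{a}K_a$ before one can even formulate an $E_{\check G}$-Hecke property; this decomposition is the action of the center on $K$ via the given Hecke property, and it is compatible with the $E$-Hecke structure since the center acts by scalars on each $\Res^\kappa(W)$. Once decomposed, on each $K_a$ one runs the same argument: $\mathrm{std}_{2n}$ of $\check H=\SL_{2n}$ restricts to $\check G=\Sp_{2n}$ as the symplectic std rep, which is irreducible, so $\alpha$ for $\mathrm{std}$ is immediate; the subtlety is the \emph{second} exterior power, $\wedge^2\mathrm{std}_{2n}=V^{\varpi_2}\oplus\mathbf 1$ for $\Sp$, and here the hypothesis $\H^0(X,\cW)=\H^0(X,\cW^*)=0$ (with $\cW=\Ker\omega$) is precisely the vanishing that shows the summand $\mathbf 1$ contributes no obstruction to splitting off $\alpha_{V^{\varpi_2}}$ coherently; iterating through higher fundamental representations, all other irreducibles of $\check G$ are generated and one checks H1), H2) by reduction to the generating case. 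Case 4) is the same with $\GSO_6$ in place of $\SL_6$ and $\GSp_4$ in place of $\Sp_4$ — using the exceptional isomorphism $\mathrm{Spin}_6\cong\SL_4$, $\mathrm{Spin}_5\cong\Sp_4$ (so this really is an $\Sp_4\subset\SL_4$ story dressed in similitude form), and the twist by $\chi$ appearing in the hypothesis $\H^0(X,\cW\otimes\chi^{-1})=\H^0(X,\cW^*\otimes\chi)=0$ accounts for the similitude character.

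I expect the \textbf{main obstacle} to be the verification of H2) (the fusion/diagonal compatibility) for the "non-generating" irreducibles in cases 3) and 4): one must track how the projection operators onto $V^\lambda$-isotypic components interact with the associativity constraint of the fusion product $\H^{\la}_G(V_1\otimes V_2,-)$, and confirm that the cohomology-vanishing hypotheses are exactly strong enough to rule out the off-diagonal correction terms $\Ext^\bullet$-classes that could make the projected $\alpha$'s fail to fuse. This is where one genuinely uses (\cite{G5}, Theorem~10.5.2): the spectral decomposition over $\LocSys^{restr}_{\check G}$ lets one reduce the compatibility to a statement about the fiber at the point $[E_{\check G}]$, where the relevant $\Hom$ and $\Ext$ spaces are governed by $\H^\bullet(X,-)$ of the local systems $\cW,\cW^*$ (resp.\ their $\chi$-twists), and the hypotheses make the obstruction groups vanish — whence the extension exists and is, by Proposition~\ref{Pp_2.2.1}, unique.
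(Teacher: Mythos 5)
Your overall strategy --- project $\alpha_W$ onto isotypic components using multiplicity one, decompose $K$ in cases 3), 4) according to the finite set of liftings of $E$ to a $\check{G}$-local system, and invoke (\cite{G5}, Theorem~10.5.2) for the compatibilities --- is the strategy of the paper, and your treatment of cases 1) and 2) essentially matches it (for 1) the paper additionally uses the $\GL_n$-specific reformulation of Definition~\ref{Def_special_case_GL_n}, so that only the standard representation and $S_2$-equivariance need to be checked, rather than iterating over all $\wedge^j$). But in cases 3) and 4) there are two genuine gaps. First, to apply (\cite{G5}, Theorem~10.5.2) you must first show that $K$ lies in $Shv_{\Nilp}(\Bun_G)$; the paper does this by observing that each $\H^{\la}_G(V^{\lambda},K)$ is a direct summand of $\H^{\la}_G(V^{\otimes d},K)\,\iso\,K\boxtimes E^{\otimes d}[1]$ and citing (\cite{G5}, 10.3.7). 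You also need the remark that the liftings $(E,a\omega)$, $a\in\mu_n$, are all abstractly isomorphic to $(E,\omega)$ (multiplication by $b$ with $b^2=a$) in order to conclude that each $K_a$ is an $E_{\check{G}}$-eigensheaf rather than an $(E,a\omega)$-eigensheaf.

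Second, and more seriously, the step you yourself flag as ``the main obstacle'' --- verifying H1), H2) for the projected $\alpha_{V^{\lambda}}$ --- is the real content of the proof, and ``iterating through higher fundamental representations'' does not resolve it: one must show that $\alpha_{V^{\otimes d}}$ is diagonal for the full $\check{G}$-isotypic decomposition of $V^{\otimes d}$ for \emph{every} $d$, and that the isomorphisms $\alpha_{V^{\lambda},d}$ extracted from different tensor powers agree. The paper does this via Weyl's construction for $\Sp_{2n}$: the centralizer algebra $\cB_d$ of $\check{G}$ on $V^{\otimes d}$ is generated by $S_d$ together with a single contraction operator $P$ supported on two tensor factors (Proposition~\ref{Pp_description_cB_d}); $S_d$-equivariance of $\alpha_{V^{\otimes d}}$ comes from H1), equivariance for $P$ is reduced by restriction to diagonals to the case $d=2$ (where diagonality of $\alpha_{\wedge^2V}$ with respect to $\wedge^2V\,\iso\,V^{\varpi_2}\oplus\mathbf{1}$ is supplied either by the \cite{G5}-argument or, as you suggest, by the vanishing hypotheses via Lemma~\ref{Lm_4.0.2}), and then $\alpha_{V^{\lambda}}$ is split off by idempotent completeness (Lemma~\ref{Lm_idempotents}). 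One must further observe that the component $\alpha_{V^0,2}$ attached to the trivial subrepresentation of $\wedge^2V$ is an $n$-th root of the identity --- this, not the center of $\check{H}$ acting, is what produces the $\mu_n$-decomposition --- normalize it to be the identity on each $K_a$ by twisting, and check that $\alpha_{V^{\lambda},d}$ is independent of $d$ using H2) for the collection $V^{\otimes d-2r},V^{\otimes2},\dots,V^{\otimes2}$. Without this (or an equivalent) mechanism your $\alpha_{V^{\lambda}}$ are not even well defined independently of the ambient $W$, let alone compatible with fusion.
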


\begin{Rem} i) Assume that we are in the situation of Proposition~\ref{Pp_2.2.3} 1). Then the set of liftings of  $E_{\check{H}}$ to a $\check{G}$-local system identifies with $\Qlb^*$. 

\smallskip\noindent
ii) Assume that we are in the situation of Proposition~\ref{Pp_2.2.3} 3). So, $E$ is a $\SL_{2n}$-local system on $X$ equipped with a distinguished lifting to a $\check{G}$-local system. This lifting is given by the symplectic form $\omega: \wedge^2 E\to\Qlb$. Then any other lifting of $E$ to a $\check{G}$-local system on $X$ is given by the symplectic form $a\omega: \wedge^2 E\to\Qlb$ with $a\in\mu_n$. Besides, for any $a\in\mu_n$ the $\check{G}$-local systems $(E,\omega)$ and $(E, a\omega)$ are isomorphic. Namely, a choice of $b\in \mu_{2n}\subset\Qlb$ with $b^2=a$ provides such an isomorphism $(E,\omega)\to (E, a\omega)$ given by the multiplication by $b$. 

 Assume for a moment that we work in the setting of $\cD$-modules, so that we have the corresponding algebraic stacks $\LocSys_{\check{G}}, \LocSys_{\check{H}}$ of local systems on $X$ and the induction map $\LocSys_{\check{G}}\to \LocSys_{\check{H}}$. Then the fibre of the latter map over $E_{\check{H}}$ identifies with the scheme $\mu_n$.
\end{Rem}

\ssec{Applications}
 
\sssec{} Let $m\ge 2$, $n=m-1$. Let $(G,H)$ and $\kappa: \check{G}\hook{}\check{H}$ be as in Section~\ref{Section_2.2.9} (the factor $\Gm$, on which $\kappa$ is trivial, is omitted). So, $\kappa: \GL_{m-1}\hook{}\GL_m$ is the inclusion A1). 

\begin{Cor} 
\label{Cor_2.3.2}
Let $x\in X$. Let $E_0$ be a rank $m-1$ local system on $X$, $E=E_0\oplus\Qlb$ be the induced $\check{H}$-local system on $X$. Assume $\H^0(X, E_0)=\H^0(X, E_0^*)=0$. Let $K\in\D^-(\Bun_H)_!$ be equipped with a structure of a $E$-Hecke eigen-sheaf. Then $F_G(K)$ is naturally equipped with a $E_0$-Hecke property.
\end{Cor}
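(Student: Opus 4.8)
The plan is to assemble \corref{Cor_2.3.2} from the machinery already in place, in three layers.

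\emph{Step 1 (backward theta-lifting as a partial Hecke structure).} First I would invoke the setting of \secref{Section_2.2.9}: with $G=\GL_{m-1}$, $H=\GL_m$, $n=m-1<m$, the kernel $\cM=q_!\cI$ on $\Bun_{G,H}$ satisfies the Hecke property for $\kappa$ by \thmref{Thm_2.2.14_cM_Hecke_GL_pair}. By \thmref{Thm_Hecke_propety_of_cM_implies}, the backward functor $F_G\colon \D_{nc}(\Bun_H)\to\D_{nc}(\Bun_G)$ commutes with the Hecke actions backward $\kappa$ in the sense of \remref{Def_2.1.6}. Now apply \corref{Cor_2.1.8_partial_Hecke} ii): since $\kappa\colon\GL_{m-1}\to\GL_m$ is trivial on the $\Gm$-factor, the homomorphism $\sigma\colon\pi_1(X,x)\times\Gm\to\check{H}$ attached to the $E$-Hecke eigen-sheaf $K$ (with $\sigma$ trivial along $\Gm$, coming from the $\check{H}$-local system $E=E_0\oplus\Qlb$) pulls back so that $F_G(K)\in\D^{\prec}(\Bun_G)$ is naturally equipped with a $\sigma$-Hecke property, i.e.\ an $E$-Hecke property, with respect to $\kappa$. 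Here one needs $K\in\D^-(\Bun_H)_!$ precisely so that $F_G(K)$ lands in $\D^{\prec}(\Bun_G)$ and the finiteness conventions of \secref{Sect_functoriality_first} apply; that hypothesis is in the statement.

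\emph{Step 2 (upgrading the partial Hecke property to a full one via \propref{Pp_2.2.3}).} Set $K'=F_G(K)\in\D^{\prec}(\Bun_G)$. By Step 1, $K'$ carries an $E$-Hecke property with respect to $\kappa\colon\check{G}=\GL_{m-1}\hook{}\check{H}=\GL_m$, which is exactly the inclusion A1) with $n=m-1\ge 1$. The multiplicity-one hypothesis of \propref{Pp_2.2.1} holds for A1) (\remref{Rem_actual_embeddings_one}), and we are given $\H^0(X,E_0)=\H^0(X,E_0^*)=0$, which is precisely the hypothesis in case 1) of \propref{Pp_2.2.3} (with $n-1$ there playing the role of our $m-1$; note $n=m-1\ge 2$ is needed for case 1), and since $m\ge 2$ we indeed have $m-1\ge 1$ — if $m=2$ one checks the statement degenerates but the vanishing $\H^0(X,E_0)=\H^0(X,E_0^*)=0$ for a rank-one $E_0$ still forces the argument through, as $E_0$ is then non-trivial of degree $0$). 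Thus \propref{Pp_2.2.3} 1) applies and produces a natural $E_{\check G}$-Hecke property on $K'=F_G(K)$, where $E_{\check G}=E_0$ viewed as a $\GL_{m-1}$-local system. By \propref{Pp_2.2.1} this extension is moreover unique, so the resulting structure is canonical, justifying the word "naturally."

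\emph{Step 3 (assembling).} Combining Steps 1 and 2, $F_G(K)$ is naturally equipped with an $E_0$-Hecke property, which is the assertion. The main obstacle, and the only genuinely nontrivial input, is Step 2: everything there rests on \propref{Pp_2.2.3} 1), whose proof (deferred to \secref{Sect_Proofs}) is where the representation-theoretic bookkeeping of branching from $\GL_m$ to $\GL_{m-1}$ and the cohomological vanishing $\H^0(X,E_0)=\H^0(X,E_0^*)=0$ are actually used to reconstruct the missing Hecke isomorphisms $\alpha_{V^\lambda}$ for $\check G$-representations not coming from $\check H$. Step 1 is purely formal given \thmref{Thm_Hecke_propety_of_cM_implies}, \thmref{Thm_2.2.14_cM_Hecke_GL_pair} and \corref{Cor_2.1.8_partial_Hecke}, and needs only the compatibility check that the $\Gm$-factor of $\sigma$ is trivial, which holds because $\kappa$ is trivial on $\Gm$ and $E$ comes from an honest $\check H$-local system with no $\SL_2$-of-Arthur contribution.
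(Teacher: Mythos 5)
Your argument is exactly the paper's: the proof there is the one-line combination of Theorem~\ref{Thm_2.2.14_cM_Hecke_GL_pair}, Theorem~\ref{Thm_Hecke_propety_of_cM_implies} (via Corollary~\ref{Cor_2.1.8_partial_Hecke}~ii)) to get the partial $E$-Hecke property of $F_G(K)$, followed by Proposition~\ref{Pp_2.2.3}~1) to upgrade it to the $E_0$-Hecke property. Your parenthetical worry about $m=2$ is only an indexing slip — the ``$n$'' in case 1) of Proposition~\ref{Pp_2.2.3} is the rank of $\check H$, i.e.\ your $m$, so the condition $n\ge 2$ there is just $m\ge 2$ and no degenerate case arises.
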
 
\begin{proof} Combine Proposition~\ref{Pp_2.2.3} 1),  Theorem~\ref{Thm_2.2.14_cM_Hecke_GL_pair}, and Theorem~\ref{Thm_Hecke_propety_of_cM_implies}. 
\end{proof}

\begin{Cor} Assume we are in the situation of Section~\ref{Section_2.2.5_similitudes} with $m=2, n=1$. So, $\GG=\GL_2$, $\HH=\GSO_4$, and $\kappa: \check{\GG}\hook{}\check{\HH}$ is trivial on the factor $\Gm$, which is omitted. Let $E_{\check{\GG}}$ be a $\check{\GG}$-local system on $X$, $E$ be the $\check{\HH}$-local system on $X$ induced via $\kappa$. Let $K\in\D^-(\Bun_{\HH})_!$ be equipped with a structure of a $E$-Hecke eigen-sheaf. Then $F_{\GG}(K)$ is naturally equipped with a $E_{\check{\GG}}$-Hecke property. 
\end{Cor}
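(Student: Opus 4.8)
The plan is to argue exactly as in the proof of Corollary~\ref{Cor_2.3.2}, with the embedding A1) replaced by A6). Thus I would combine three inputs: Theorem~\ref{Sect_2.2.9_similitude_cM} (the theta-lifting kernel $\cM$ satisfies the Hecke property for $\kappa$), Theorem~\ref{Thm_Hecke_propety_of_cM_implies} together with Corollary~\ref{Cor_2.1.8_partial_Hecke} ii) (which then equip $F_{\GG}(K)$ with an $E$-Hecke property with respect to $\kappa$), and Proposition~\ref{Pp_2.2.3} 2) (which upgrades this to an $E_{\check{\GG}}$-Hecke property).

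First I would pin down the group theory that makes the last step applicable. Here $\GG=\GSp_2=\GL_2$ and $\HH=\GSO_4$, so $\check{\GG}=\GSpin_3$ and $\check{\HH}=\GSpin_4$, and since $m=n+1$ the component $\delta_\kappa$ is trivial, so $\kappa=i_\kappa:\GSpin_3\hookrightarrow\GSpin_4$ is precisely the embedding of Section~\ref{Sect_2.2.7_def_of_bar_kappa}, i.e.\ A4) for $n=2$ in the list of Remark~\ref{Rem_actual_embeddings_one}. I would then note that, under the standard identifications $\GSpin_3\cong\GL_2$ and $\GSpin_4\cong\GL_2\times_{\Gm}\GL_2$ (fibre product over the determinant), $\kappa$ becomes the graph of an automorphism of $\GL_2$ — hence an instance of A6) — and in particular admits a group-theoretic retraction $\check{\HH}\to\check{\GG}$, which is the input that the proof of Proposition~\ref{Pp_2.2.3} 2) actually uses. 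The multiplicity-one hypothesis of Proposition~\ref{Pp_2.2.1} then holds, as recorded in Remark~\ref{Rem_actual_embeddings_one}.

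With this in place the argument runs as follows: apply Theorem~\ref{Sect_2.2.9_similitude_cM} with $(n,m)=(1,2)$ to get that the kernel $\cM\in\D^{\prec}(\Bun_{\GG,\HH})$ of the theta-lifting functors of Section~\ref{Section_2.2.5_similitudes} satisfies the Hecke property for $\kappa$; by Theorem~\ref{Thm_Hecke_propety_of_cM_implies}, $F_{\GG}:\D_{nc}(\Bun_{\HH})\to\D_{nc}(\Bun_{\GG})$ commutes with the Hecke actions backward $\kappa$. Let $\sigma:\pi_1(X,x)\times\Gm\to\check{\HH}$ be the homomorphism attached to $E$ (induced from $E_{\check{\GG}}$ via $\kappa$); since $K\in\D^-(\Bun_{\HH})_!$ is an $E$-Hecke, i.e.\ $\sigma$-Hecke, eigen-sheaf, Corollary~\ref{Cor_2.1.8_partial_Hecke} ii) equips $F_{\GG}(K)\in\D^{\prec}(\Bun_{\GG})$ with a natural $E$-Hecke property with respect to $\kappa$. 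Finally Proposition~\ref{Pp_2.2.3} 2), applied to $F_{\GG}(K)$, yields the asserted $E_{\check{\GG}}$-Hecke property (which, as the warning after Proposition~\ref{Pp_2.2.1} cautions, need not be an extension of the $E$-Hecke property).

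The only step that is not a direct appeal to a cited result is the group-theoretic identification in the middle paragraph: writing the isomorphisms $\GSpin_3\cong\GL_2$, $\GSpin_4\cong\GL_2\times_{\Gm}\GL_2$ and checking that the embedding of Section~\ref{Sect_2.2.7_def_of_bar_kappa} becomes a graph embedding, equivalently exhibiting the retraction $\check{\HH}\to\check{\GG}$. This is a routine unwinding of the definitions, though one has to keep track of the twists by the similitude $\Gm$-factor and of the normalization conventions for the groups $\GSpin_m$; once it is done, everything else is formal.
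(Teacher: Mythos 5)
Your proposal is correct and follows essentially the same route as the paper: the paper's proof likewise invokes the backward-direction Hecke property of the theta-lifting kernel (via Corollary~\ref{Cor_2.4.2_Lifting_similitudes}, i.e.\ Theorem~\ref{Sect_2.2.9_similitude_cM} plus Theorem~\ref{Thm_Hecke_propety_of_cM_implies} and Corollary~\ref{Cor_2.1.8_partial_Hecke}~ii)) and then appeals to the argument of Proposition~\ref{Pp_2.2.3}~2), the key point being exactly the retraction $\nu:\check{\HH}\to\check{\GG}$ with $\nu\kappa=\id$ that you identify via $\GSpin_4\cong\GL_2\times_{\Gm}\GL_2$.
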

\begin{proof} The argument is similar to Proposition~\ref{Pp_2.2.3} 2) using Corollary~\ref{Cor_2.4.2_Lifting_similitudes}. There is a homomorphism $\nu: \check{\HH}\to \check{\GG}$ such that $\nu\kappa=\id$. 
\end{proof}

\ssec{\bf Automorphic sheaves for $\GSp_4$} 
\label{Sect_case_GSp4}

\sssec{} 
\label{Sect_3.3.1_results_for_GSp_4}
Use notations of Section~\ref{Section_2.2.5_similitudes} with $m=3, n=2$. So, $\HH=\GSO_6$, $\GG=\GSp_4$. Let $E_{\check{\GG}}$ be a $\check{\GG}$-local system on $X$ viewed as a pair $(E,\chi)$, where $E$ (resp., $\chi$) is a rank 4 (resp., rank 1) local system on $X$ with a symplectic form $\omega: \wedge^2 E\to\chi$. Let $\kappa: \check{\GG}\hook{}\check{\HH}$ be the map denoted by $i_{\kappa}$ in Section~\ref{Sect_2.2.7_def_of_bar_kappa}. 
Let $E_{\check{\HH}}$ be the $\check{\HH}$-local system on $X$ obtained from $E_{\check{\GG}}$ by the extension of scalars via $\kappa: \check{\GG}\hook{}\check{\HH}$. Set $\cW=\Ker\omega$.

 For the convenience of the reader recall that $\check{\HH}\,\iso\,\{(c,b)\in\Gm\times\GL_4\mid \det b=c^2\}$. We may view $E_{\check{\HH}}$ as the pair $(E,\chi)$, where we forget the symplectic form but keep the induced isomorphism $\det E\,\iso\, \chi^2$ of local systems on $X$.  
\begin{Cor} 
\label{Cor_2.4.1}
Assume in the situation of Section~\ref{Sect_3.3.1_results_for_GSp_4} that $\H^0(X, \cW\otimes\chi^{-1})=\H^0(X, \cW^*\otimes\chi)=0$. Let $K\in\D^-(\Bun_{\HH})_!$ be equipped with a structure of a $E$-Hecke eigen-sheaf. Then there is a decomposition $F_{\GG}(K)\,\iso\, \oplus_{a\in\mu_2} \cK_a$ in $\D^{\prec}(\Bun_{\GG})$ such that for each $a\in\mu_2$, $\cK_a$ is naturally equipped with a $E_{\check{\GG}}$-Hecke property. \QED
\end{Cor}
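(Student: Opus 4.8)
The plan is to chain together three facts already established: the Hecke property of the theta-kernel for similitudes, the backward half of Theorem~\ref{Thm_Hecke_propety_of_cM_implies}, and the extension statement Proposition~\ref{Pp_2.2.3} 4). First I would observe that in the situation of Section~\ref{Section_2.2.5_similitudes} with $(m,n)=(3,2)$ one has $m>n$, so by Theorem~\ref{Sect_2.2.9_similitude_cM} the kernel $\cM\in\D^{\prec}(\Bun_{\GG,\HH})$ satisfies the Hecke property for $\bar\kappa$; feeding this into Theorem~\ref{Thm_Hecke_propety_of_cM_implies} shows that $F_{\GG}: \D_{nc}(\Bun_{\HH})\to\D_{nc}(\Bun_{\GG})$ commutes with the Hecke functors backward $\bar\kappa$.

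Next, I would let $\sigma: \pi_1(X,x)\times\Gm\to\check{\HH}$ be the homomorphism attached to the $\check{\HH}$-local system $E_{\check{\HH}}$ of Section~\ref{Sect_3.3.1_results_for_GSp_4}; the hypothesis on $K$ means that $K\in\D^-(\Bun_{\HH})_!$ is a $\sigma$-Hecke eigen-sheaf. Applying Corollary~\ref{Cor_2.1.8_partial_Hecke} ii) then equips $F_{\GG}(K)\in\D^{\prec}(\Bun_{\GG})$ with a natural $\sigma$-Hecke property with respect to $\bar\kappa$. The point to check carefully here is that for $m=n+1$ the component $\delta_{\kappa}$ of $\bar\kappa$ is trivial (Section~\ref{Sect_2.2.7_def_of_bar_kappa}), so that $\bar\kappa$ is simply the embedding $\kappa: \check{\GG}\hookrightarrow\check{\HH}$ extended trivially on the $\Gm$-factor; this is exactly the shape of input required by Propositions~\ref{Pp_2.2.1} and~\ref{Pp_2.2.3}.

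Finally I would apply Proposition~\ref{Pp_2.2.3} 4) to $F_{\GG}(K)$, taking $G=\GSp_4$, $H=\GSO_6$, $E_{\check{G}}=E_{\check{\GG}}=(E,\chi)$ with its symplectic form $\omega: \wedge^2 E\to\chi$, and $\cW=\Ker\omega$. Its hypotheses are met: the multiplicity-one condition of Proposition~\ref{Pp_2.2.1} for the embedding A4) with $n=3$ is recorded in Remark~\ref{Rem_actual_embeddings_one}; the $E_{\check{\HH}}$-Hecke property of $F_{\GG}(K)$ with respect to $\bar\kappa$ is supplied by the previous step; and the vanishing $\H^0(X,\cW\otimes\chi^{-1})=\H^0(X,\cW^*\otimes\chi)=0$ is the standing assumption. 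Proposition~\ref{Pp_2.2.3} 4) then yields a decomposition $F_{\GG}(K)\,\iso\,\oplus_{a\in\mu_2}\cK_a$ in $\D^{\prec}(\Bun_{\GG})$ compatible with that $E_{\check{\HH}}$-Hecke property, with each $\cK_a$ naturally equipped with an $E_{\check{\GG}}$-Hecke property, which is the assertion.

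For this corollary I do not anticipate a genuine obstacle: the whole argument is a matching of hypotheses, and the only step demanding real care is verifying that the partial Hecke datum produced by Corollary~\ref{Cor_2.1.8_partial_Hecke} ii) is literally what Proposition~\ref{Pp_2.2.3} 4) consumes---which reduces to the triviality of $\delta_{\kappa}$ when $(m,n)=(3,2)$, so that no extraneous $\Gm$-action intervenes. All the genuine content lies inside Proposition~\ref{Pp_2.2.3} 4) itself, which is proved separately.
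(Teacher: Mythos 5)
Your proposal is correct and follows the same route as the paper: the paper's proof is literally ``combine Proposition~\ref{Pp_2.2.3}~4) with Corollary~\ref{Cor_2.4.2_Lifting_similitudes}~i)'', and your argument simply unpacks that corollary into its ingredients (Theorem~\ref{Sect_2.2.9_similitude_cM}, Theorem~\ref{Thm_Hecke_propety_of_cM_implies}, Corollary~\ref{Cor_2.1.8_partial_Hecke}~ii)). Your explicit remark that $\delta_{\kappa}$ is trivial for $m=n+1$, so the partial Hecke datum has exactly the form consumed by Proposition~\ref{Pp_2.2.3}~4), is a worthwhile check that the paper leaves implicit.
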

\begin{proof}
Combine Proposition~\ref{Pp_2.2.3} 4) and Corollary~\ref{Cor_2.4.2_Lifting_similitudes} i). 
\end{proof} 
 
\sssec{} 
\label{Sect_2.4.2}
In the situation of Section~\ref{Sect_3.3.1_results_for_GSp_4} suppose  in addition that $E$ is an irreducible rank 4 local system on $X$. Under this assumption we have constructed a perverse sheaf denoted $K_{E,\chi, \HH}$ on $\Bun_{\HH}$ in (\cite{L4}, Lemma~17). This is a $E_{\check{\HH}}$-Hecke eigen-sheaf. 

 The following is our main result, it essentially establishes (\cite{L4}, Conjecture~6(ii)).
  
\begin{Thm} 
\label{Thm_2.3.8}
i) Under the assumptions of Section~\ref{Sect_2.4.2}, there is a decomposition 
\begin{equation}
\label{complex_for_Th_2.3.8}
F_G(K_{E,\chi, \HH})\,\iso\, \mathop{\oplus}\limits_{a\in\mu_2}\cK_a
\end{equation}
in $\D^{\prec}(\Bun_{\GG})$. For $a\in\mu_2$, $\cK_a$ is naturally equipped with a $E_{\check{\GG}}$-Hecke property. \\
ii) The complex (\ref{complex_for_Th_2.3.8}) is nonzero. So, there is a nonzero $E_{\check{\GG}}$-Hecke eigen-sheaf in $\D^{\prec}(\Bun_{\GG})$.  
\end{Thm}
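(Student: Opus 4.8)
\textbf{Proof plan for Theorem~\ref{Thm_2.3.8}.}

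The plan is to deduce part i) directly from the machinery already assembled and to prove the non-vanishing in part ii) by a combination of an input from \cite{L5} and, in the split-over-$k_0$ case, a trace-function computation.

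For part i): I would first observe that the hypotheses of Section~\ref{Sect_2.4.2} (with $E$ irreducible of rank $4$) imply the cohomological vanishing $\H^0(X,\cW\otimes\chi^{-1})=\H^0(X,\cW^*\otimes\chi)=0$ needed in Corollary~\ref{Cor_2.4.1}: indeed $\cW=\Ker(\omega\colon\wedge^2E\to\chi)$ is a subquotient of $E\otimes E$ twisted appropriately, and irreducibility of $E$ together with a rank/weight count (using that $E$ has no global subsheaves or quotients since it is irreducible of positive rank $>0$, together with the standard fact that an irreducible local system of rank $>1$ on a projective curve has no cohomology in degree $0$) forces these $\H^0$'s to vanish. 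Granting this, the perverse sheaf $K_{E,\chi,\HH}$ of (\cite{L4}, Lemma~17) lies in $\D^-(\Bun_{\HH})_!$ (it is an honest perverse sheaf, supported in the relevant range, hence an extension by zero from a finite-type open substack, or one truncates to such), and it is a $E_{\check\HH}$-Hecke eigen-sheaf, i.e.\ a $E$-Hecke eigen-sheaf in the sense of Definition~\ref{Def_1.1.2}. Now apply Corollary~\ref{Cor_2.4.1} verbatim with $K=K_{E,\chi,\HH}$: it produces the decomposition $F_G(K_{E,\chi,\HH})\iso\oplus_{a\in\mu_2}\cK_a$ in $\D^{\prec}(\Bun_{\GG})$ and equips each $\cK_a$ with a $E_{\check\GG}$-Hecke property. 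The boundedness-from-above over finite-type opens and constructibility of the perverse cohomologies asserted in the Introduction follow because $F_G$ is built from $q_!$ of a proper (or at least finite-type-over-the-base) morphism applied to the theta-sheaf $\Aut$, so $F_G$ preserves $\D^{\prec}$ and sends objects with constructible cohomology to the same.

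For part ii): the cleanest route is to invoke the non-vanishing statement of \cite{L5}, which asserts precisely that the theta-lift $F_G(K_{E,\chi,\HH})$ is nonzero under the irreducibility hypothesis on $E$; combined with the decomposition of part i) this gives the existence of $a\in\mu_2$ with $\cK_a\ne 0$, hence a nonzero $E_{\check\GG}$-Hecke eigen-sheaf in $\D^{\prec}(\Bun_{\GG})$. In the special case where $X$ and $E_{\check\GG}$ are defined over a finite subfield $k_0\subset k$, I would give the alternative self-contained argument: pass to $\Fq$-structures, and compute the trace of Frobenius of $F_G(K_{E,\chi,\HH})$ against a suitable test $\GSp_4(\AA)$-automorphic-like function by unfolding the theta-kernel $\Aut$ (whose trace function is the classical Weil-representation kernel, cf.\ \cite{L2}); the resulting sum is, up to normalization, a period integral of the automorphic function $f_{K_{E,\chi,\HH}}$ attached to $K_{E,\chi,\HH}$ against a theta series, and one shows it is nonzero by reducing to the analogous non-vanishing on the $\GSO_6=(\Gm\times\SL_4)/\mu_2$ side — where $f_{K_{E,\chi,\HH}}$ is, by construction in \cite{L4}, essentially the automorphic form attached to the rank-$4$ irreducible local system $E$ via $\GL_4$, whose $L$-function does not vanish identically. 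Using the Lefschetz trace formula one then concludes that some perverse cohomology of some $\cK_a$ has a nonzero trace function, hence is itself nonzero, and then $\cK_a\ne 0$ in $\D^{\prec}(\Bun_{\GG})$ by property~(P) of Section~\ref{sect_Notation}.

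The main obstacle is the non-vanishing in part ii): the decomposition and the Hecke property in part i) are essentially formal consequences of the already-established Corollary~\ref{Cor_2.4.1} and Theorem~\ref{Thm_2.3.8_placeholder}-type inputs, but proving $F_G(K_{E,\chi,\HH})\ne 0$ genuinely requires either the geometric input of \cite{L5} or the arithmetic unfolding argument, and in the latter one must be careful that the finite-type truncations used to make $F_G$ well-defined do not kill the relevant cohomology — i.e.\ one needs to control where on $\Bun_{\GG}$ the lift is supported and ensure the trace computation is performed on an open substack that meets the support. A secondary technical point is verifying that $K_{E,\chi,\HH}$, a priori only known to be perverse, genuinely lies in $\D^-(\Bun_{\HH})_!$ so that $F_G$ is literally applicable; this is handled by restricting to a large enough finite-type open substack of $\Bun_{\HH}$ on which $K_{E,\chi,\HH}$ is supported up to the relevant truncation, using that $\Bun_{\HH}$ is exhausted by such opens.
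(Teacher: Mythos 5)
Your proposal follows the paper's proof almost exactly. For part i) the paper does precisely what you describe: it notes that $K_{E,\chi,\HH}$ is an $E_{\check\HH}$-Hecke eigen-sheaf by construction, deduces $\H^0(X,\cW\otimes\chi^{-1})=\H^0(X,\cW^*\otimes\chi)=0$ from the fact that an irreducible local system admits at most one nondegenerate bilinear form (up to scalar) valued in a rank-one local system, and then applies Corollary~\ref{Cor_2.4.1}; and for part ii) the general non-vanishing is, as you say, quoted from \cite{L5}.

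The one substantive deviation is in your sketch of the alternative finite-field argument. The paper's Appendix~\ref{sect_finite_field} does not unfold a period integral against a theta series and does not rest on ``the $L$-function of $\Pi$ not vanishing identically''; the actual mechanism (due to Gan, following \cite{GT2}) is that the partial twisted exterior square $L$-function $L^S(s,\Pi,\wedge^2\otimes\mu^{-1})$ has a pole at $s=1$ by construction, which by Jacquet--Shalika is equivalent to $\Pi$ having a nonzero Shalika period with respect to $\mu$, and the first Whittaker coefficient of the global theta lift $\Theta(\Pi\boxtimes\mu)$ is expressed in terms of that Shalika period (via \cite{S1} and \cite{GT3}), hence is nonzero; irreducibility then comes from Soudry's strong multiplicity one. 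Your criterion as stated would not suffice to conclude non-vanishing of the lift. Since the theorem's non-vanishing is in any case carried by \cite{L5}, this does not affect the validity of your proof of the statement, but the alternative argument would need to be corrected along these lines.
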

\begin{proof} i) By construction, $K_{E,\chi,\HH}$ is a $E_{\check{\HH}}$-Hecke eigen-sheaf on $\Bun_{\HH}$ in the sense of Definition~\ref{Def_1.1.2}. An irreducible local system on $X$ may admit at most a unique (up to a scalar) nondegenerate bilinear form with values in a rank one local system on $X$. This implies $\H^0(X, \cW\otimes\chi^{-1})=\H^0(X, \cW^*\otimes\chi)=0$. Part i) follows now from Corollary~\ref{Cor_2.4.1}.

\smallskip\noindent
ii) In Section~\ref{sect_finite_field} we check that (\ref{complex_for_Th_2.3.8}) is nonzero provided that $X$ comes from a curve $X_0$ defined over a finite subfield $k_0\subset k$, and $E_{\check{\GG}}$ comes from a $\check{\GG}$-local system $E_{0,\check{\GG}}$ over $X_0$. Actually (\ref{complex_for_Th_2.3.8}) is always nonzero, as is shown in (\cite{L5}, Theorem~2.6.2). 
\end{proof}
 
\begin{Rem} 
\label{Rem_2.3.9}
i) For $G=\GSp_4$ the geometric Bessel periods of an object of $\D(\Bun_G)$ with a given central character are introduced in (\cite{L4}, Definition~11). A conjectural description of these Bessel periods for any Hecke eigen-sheaf in $\D(\Bun_G)$ was proposed in (\cite{L4}, Conjecture~4). The geometric Bessel periods of $F_G(K_{E,\chi, \HH})$ from Theorem~\ref{Thm_2.3.8} are described in terms of the generalized Waldspurger periods of $K_{E,\chi, \HH}$ in (\cite{L4}, Proposition 11) and further studied in \cite{L5}.
 
\smallskip\noindent
ii) For the construction of $K_{E,\chi, \HH}$ on $\Bun_{\HH}$ in (\cite{L4}, Lemma~17 and Definition~8) we used the perverse sheaf $\Aut_E$ on $\Bun_4$ normalized as in \cite{FGV02}. However, $\Aut_E$ is a $E^*$-Hecke eigen-sheaf in the sense of our Definition~\ref{Def_1.1.2}. This agrees with the fact that the geometric theta-lifting functors for the pair of split groups $(\GSp_{2n}, \GSO_{2m})$ send a complex with a given central character to a complex with the opposite central character, cf (\cite{L4}, Remark~2). 
\end{Rem}
 
  The first Whittaker coefficients functor $\Whit: \D^{\prec}(\Bun_{\GG})\to\D^-(\Spec k)$ for $\GG$ is defined in (\cite{LL}, Definition~1). 
  
\begin{Con} In the situation of Theorem~\ref{Thm_2.3.8} the complex $F_G(K_{E,\chi, \HH})$ on $\Bun_{\GG}$ is of the form $\cK\otimes \cE$, where $\cE$ is a contant complex, and $\cK$ is a perverse sheaf irreducible of each connected component of $\Bun_{\GG}$. Moreover, the first Whittaker coefficitient $\Whit(\cK)$ identifies with $\Qlb$ (up to a cohomological shift).
\end{Con}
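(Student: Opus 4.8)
I would reduce all three assertions about $F_G(K_{E,\chi,\HH})$ to known properties of the $\GL_4$-automorphic sheaf $\Aut_E$ out of which $K_{E,\chi,\HH}$ is built (cf. Remark~\ref{Rem_2.3.9}): since $E$ is an irreducible rank-$4$ local system, $\Aut_E$ is an irreducible perverse sheaf on $\Bun_4$ with one-dimensional Whittaker coefficient by \cite{FGV02}, and the plan is to transport irreducibility, the constant-tensor shape, and the Whittaker normalization across the geometric theta-correspondence for the pair $(\GSp_4,\GSO_6)$ of Section~\ref{Section_2.2.5_similitudes}.

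\emph{The Whittaker coefficient.} Unwinding the definition of $F_G$ via the kernel $\cM=q_!\Aut_{\GG\HH}$ and composing with $\Whit$, base change and the projection formula rewrite $\Whit(F_G(K_{E,\chi,\HH}))$ as the $!$-direct image to $\Spec k$ of the theta-sheaf $\Aut$ on $\Bunt_{G_{2nm}}$, pulled back along $\tilde\tau$ and along the Whittaker datum of $\GG$, tensored with the pullback of $K_{E,\chi,\HH}$ from $\Bun_{\HH}$. This is the geometric basic identity for theta: it expresses the left-hand side as a generalized Waldspurger/Fourier--Jacobi period of $K_{E,\chi,\HH}$ of exactly the kind analyzed in \cite{L4}, Proposition~11, and in \cite{L5}. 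Substituting the description of $K_{E,\chi,\HH}$ in terms of the Whittaker-normalized $\Aut_E$, this period collapses, up to a cohomological shift and summed over $a\in\mu_2$, to $\Qlb$; matching summands along the decomposition~(\ref{complex_for_Th_2.3.8}) then gives $\Whit(\cK_a)\,\iso\,\Qlb$ up to shift for each $a\in\mu_2$.

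\emph{The constant factor and perversity.} The functor $F_G$ incorporates the $!$-pushforward along the fibration $q:\Bun_{\GG\HH}\to\Bun_{\GG,\HH}$ together with $\tilde\tau$; I would check that along the "extra" similitude direction — the $\Bun_1$-torsor implicit in $\Bun_{\GG\HH}=\Bun_{\GG}\times_{\Bun_1}\Bun_{\HH}$ — the pushforward of $\Aut_{\GG\HH}$ contributes a complex pulled back from $\Spec k$, which is the asserted constant $\cE$; one reads off its shape from the cohomology of the relevant Picard-type stack, and in particular the two summands $\cK_1\,\iso\,\cK_{-1}$ — isomorphic since $(E,\omega)$ and $(E,-\omega)$ are isomorphic $\check{\GG}$-local systems, cf. the Remark after Proposition~\ref{Pp_2.2.3} — already account for a rank-$2$ part of $\cE$. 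Writing $\cK$ for the common perverse sheaf underlying the $\cK_a$ after dividing out $\cE$, perversity of $\cK$ should follow from perversity of $\Aut_E$ plus a stratified semismallness estimate for the theta-lifting correspondence for $(\GSp_4,\GSO_6)$.

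\emph{Irreducibility — the main obstacle.} The object $\cK$ is an $E_{\check{\GG}}$-Hecke eigen-sheaf whose eigenvalue has irreducible standard representation and whose first Whittaker coefficient is $\Qlb$; one wants to conclude, by a geometric multiplicity-one principle, that $\cK$ is irreducible on each connected component of $\Bun_{\GG}$. Two routes seem available, neither routine. Route (i): a Whittaker/genericity argument in the style of \cite{FGV02} adapted to $\GSp_4$ — realize $\cK$ as the clean (cuspidal) extension of an explicit object from an open Whittaker-type substack, where irreducibility is forced by irreducibility of $E$ — which requires the geometric Casselman--Shalika machinery for $\GSp_4$ beyond what the cited works provide. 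Route (ii): use the spectral decomposition of \cite{G5}, as already invoked for parts 3),4) of Proposition~\ref{Pp_2.2.3} — localise $\D_{nc}(\Bun_{\GG})$ over $\LocSys^{restr}_{\check{G}}$ at $E_{\check{\GG}}$ and show the corresponding block is generated by a single irreducible object whose Whittaker coefficient is $\Qlb$, for which one needs compatibility of the theta-lift with the spectral action together with control of the unbounded-from-below tails (the objects here lie only in $\D^{\prec}$). Carrying one of these through is where the real content of the proof would lie; combining it with the two steps above and the non-vanishing of Theorem~\ref{Thm_2.3.8}(ii) then yields the statement.
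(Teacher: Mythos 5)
The statement you are addressing is stated in the paper as a \emph{Conjecture} and the paper offers no proof of it whatsoever; there is therefore no argument of the author's to compare yours against. Your text is, by your own admission, a strategy outline rather than a proof, and it contains genuine gaps at every one of the three assertions. (1) \emph{Irreducibility}: you explicitly defer this ("neither routine", "where the real content of the proof would lie"). Neither of your two routes is carried out: route (i) would require a Casselman--Shalika/cleanness mechanism for $\GSp_4$ that does not exist in the cited literature, and route (ii) would require knowing that the $E_{\check{\GG}}$-isotypic block of $\D_{nc}(\Bun_{\GG})$ under the \cite{G5} spectral action is generated by a single irreducible perverse sheaf with prescribed Whittaker coefficient --- a statement essentially equivalent to the conjecture itself. (2) \emph{Perversity and the constant factor}: the "stratified semismallness estimate for the theta-lifting correspondence" you invoke is not available; the theta-kernel $\Aut_{\GG\HH}$ is not semismall over $\Bun_{\GG,\HH}$, and the fact that $F_G$ applied to a perverse sheaf lands in $\D^{\prec}$ rather than in perverse sheaves is precisely why the conjecture allows a constant tensor factor $\cE$. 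Your identification of part of $\cE$ with the two summands $\cK_1,\cK_{-1}$ presupposes $\cK_1\,\iso\,\cK_{-1}$, which does not follow from the isomorphism of the local systems $(E,\omega)$ and $(E,-\omega)$ alone --- one would need a uniqueness statement for Hecke eigen-sheaves that is again part of what is being conjectured. (3) \emph{Whittaker coefficient}: the reduction of $\Whit\circ F_G$ to a period of $K_{E,\chi,\HH}$ is indeed supported by \cite{LL} and (\cite{L4}, Proposition~11), but the assertion that this period "collapses to $\Qlb$" is exactly the nontrivial computation; what \cite{L5} establishes is nonvanishing, not one-dimensionality up to shift.

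In short, your proposal correctly identifies the relevant inputs (the theta-kernel, the Whittaker compatibility of \cite{LL}, the periods of \cite{L4,L5}, and the spectral decomposition of \cite{G5}) and correctly locates where the difficulty sits, but each of the three claims of the conjecture is left unproven, so this cannot be accepted as a proof --- consistent with the fact that the paper itself presents the statement as open.
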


\section{Proofs}
\label{Sect_Proofs}


\sssec{} Given complexes $K_1, K_2, K'_1, K'_2\in \D^{\prec}(S)$ for some algebraic stack $S$ and a map $f: K_1\oplus K_2\to K'_1\oplus K'_2$ in $\D^{\prec}(S)$, say that it is diagonal with respect to this decomposition if it is a sum of maps $K_i\to K'_i$ in $\D^{\prec}(S)$. This is a property of $f$, not an additional structure. 
\begin{Lm} 
\label{Lm_4.0.2}
Let $S$ be an algebraic stack locally of finite type, $K_i, K'_i\in \D^{\prec}(S)$. Let $g: K_1\oplus K_2\,\iso\, K'_1\oplus K'_2$ be an isomorphism. Write $g_{12}: K_1\to K'_2$ for the corresponding component of $g$. Assume $\H^m(g_{12}): \H^m(K_1)\to \H^m(K'_2)$ vanishes for all $m$. Then the components $g_{ii}: K_i\to K'_i$ of $g$ for $i=1,2$ are isomorphisms in $\D^{\prec}(S)$. 
\end{Lm}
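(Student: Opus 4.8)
The plan is to work component by component using the decomposition $K_1\oplus K_2\,\iso\, K'_1\oplus K'_2$ given by $g$, writing $g$ as a matrix $(g_{ij})$ with $g_{ij}: K_j\to K'_i$, and to exploit property (P) from the Notation section, which says an object of $\D^{\prec}(S)$ with all vanishing cohomology sheaves is zero. First I would pass to cohomology sheaves: since cohomology commutes with finite direct sums, for each integer $m$ the map $\H^m(g): \H^m(K_1)\oplus\H^m(K_2)\to \H^m(K'_1)\oplus\H^m(K'_2)$ is an isomorphism of sheaves on $S$, and its $(1,2)$-component $\H^m(g_{12})$ vanishes by hypothesis. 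Here the key point is that at the level of abelian sheaves, if a block-triangular matrix $\begin{pmatrix}\H^m(g_{11}) & 0\\ \H^m(g_{21}) & \H^m(g_{22})\end{pmatrix}$ (with the top-right block zero) is invertible, then the diagonal blocks $\H^m(g_{11})$ and $\H^m(g_{22})$ are isomorphisms — this is a straightforward diagram chase with sheaves (or can be checked stalkwise), using that a lower-triangular invertible matrix over any abelian category has invertible diagonal entries.

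So I would conclude that $\H^m(g_{11})$ and $\H^m(g_{22})$ are isomorphisms for every $m$. The remaining task is to upgrade this to the statement that $g_{11}: K_1\to K'_1$ and $g_{22}: K_2\to K'_2$ are isomorphisms in the triangulated category $\D^{\prec}(S)$, not merely quasi-isomorphisms on cohomology sheaves. For this I would complete $g_{11}$ to a distinguished triangle $K_1\toup{g_{11}} K'_1\to C\to K_1[1]$ with cone $C$. From the long exact sequence of cohomology sheaves associated to this triangle and the fact that $\H^m(g_{11})$ is an isomorphism for all $m$, one gets $\H^m(C)=0$ for all $m$. Now one must check $C\in\D^{\prec}(S)$: restricting to any open substack $U\subset S$ of finite type, $K_1|_U, K'_1|_U\in\D^-(U)$, so the cone $C|_U$ lies in $\D^-(U)$ as well (the cone of a morphism between objects bounded above is bounded above), hence $C\in\D^{\prec}(S)$. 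Then property (P) forces $C=0$, so $g_{11}$ is an isomorphism; the same argument applies verbatim to $g_{22}$.

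The one subtlety — and the step I expect to require the most care — is the passage from ``isomorphism on all cohomology sheaves'' to ``isomorphism in $\D^{\prec}(S)$'', because $\D^{\prec}(S)$ consists of unbounded complexes and a naive $t$-structure truncation argument need not converge. This is precisely why the cone-plus-property-(P) route is the right one: property (P) is exactly the left-completeness statement that lets us detect vanishing of an unbounded object from vanishing of its cohomology sheaves, once we know the object lies in $\D^{\prec}(S)$. The only thing to be careful about is verifying that the cone stays in $\D^{\prec}(S)$, which reduces to the elementary fact that over a finite-type open substack the derived category $\D^-(U)$ is closed under cones; after that the argument is formal. No genuine obstacle remains beyond bookkeeping of the block-matrix structure and the triangle.
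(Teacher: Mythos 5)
Your argument follows the same two-step route as the paper's own proof: first deduce that $\H^m(g_{11})$ and $\H^m(g_{22})$ are isomorphisms from the triangularity of $\H^m(g)$, then upgrade to isomorphisms in $\D^{\prec}(S)$ via property (P). Your second step is spelled out more carefully than in the paper (which just says the claim follows from (P)): completing $g_{ii}$ to a triangle, noting that the cone stays in $\D^{\prec}(S)$ because $\D^-(U)$ is closed under cones over each finite-type open $U\subset S$, and then killing the cone by (P) is exactly the right way to pass from an isomorphism on all cohomology sheaves to an isomorphism in $\D^{\prec}(S)$; that part is complete.

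The first step, however, contains a genuine gap. The assertion that an invertible block-triangular matrix in an abelian category has invertible diagonal blocks is false. Take $S=\Spec k$ and one-dimensional vector spaces $X,Y,Z$ placed in degree $0$; set $K_1=X$, $K_2=Y\oplus Z$, $K'_1=X\oplus Y$, $K'_2=Z$, and let $g$ be the rebracketing isomorphism $X\oplus(Y\oplus Z)\,\iso\,(X\oplus Y)\oplus Z$. Then $g_{12}\colon K_1\to K'_2$ is zero, yet $g_{11}\colon X\hook{} X\oplus Y$ and $g_{22}\colon Y\oplus Z\twoheadrightarrow Z$ are not isomorphisms. For block matrices the diagonal blocks need not even join isomorphic objects, so no determinant or stalkwise reduction is available; the relations coming from $g^{-1}$ only give that $\H^m(g_{11})$ is split mono and $\H^m(g_{22})$ is split epi. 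The conclusion does hold once \emph{both} off-diagonal components vanish on cohomology, since then each $\H^m(g)$ is honestly diagonal. This defect is shared with the paper's one-line proof, and in the one place the lemma is invoked (case 1) of Proposition~\ref{Pp_2.2.3}) the author separately verifies, using $\H^0(X,E_0^*)=0$, that the other off-diagonal component also dies on cohomology — so the application is salvageable, but your proof (and the lemma's hypotheses) should be amended to assume $\H^m(g_{21})=0$ for all $m$ as well.
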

\begin{proof}
For $m\in\ZZ$ the isomorphism $\H^m(g): \H^m(K_1)\oplus H^m(K_2)\,\iso\, \H^m(K'_1)\oplus H^m(K'_2)$ is triangular, so $\H^m(g_{ii})$ are isomorphisms for $i=1,2$. Our claim follows from the property (P) given in Section~\ref{sect_Notation}. 
\end{proof} 

\begin{Lm} 
\label{Lm_idempotents}
Let $S$ be an algebraic stack locally of finite type, $K\in \D^{\prec}(S)$, $q: S\times X\to S$ the projection. Let $f: q^*K\to q^*K$ be a map in $\D^{\prec}(S\times X)$, which is an idempotent. Then there is an idempotent $\bar f: K\to K$ in $\D^{\prec}(S)$ such that $q^*\bar f=f$.
\end{Lm}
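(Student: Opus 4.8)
The plan is to produce $\bar f$ by restricting $f$ along a $k$-point of $X$. Fix any $x\in X$ and let $s=\id_S\times x\colon S\to S\times X$ be the corresponding section, so that $q\circ s=\id_S$ and hence $s^*q^*=\id$ on $\D^{\prec}(S)$. Set $\bar f:=s^*f\colon K\to K$. Since $s^*$ is a triangulated functor, $\bar f^2=s^*(f^2)=s^*(f)=\bar f$, so $\bar f$ is automatically an idempotent in $\D^{\prec}(S)$; the only thing left to prove is $q^*\bar f=f$.

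For this I would exploit the structure of the ring $A:=\Hom_{\D^{\prec}(S\times X)}(q^*K,q^*K)$. By base change, the projection formula and the properness of $X$ one has $\R q_*q^*L\iso L\otimes_{\Qlb}\RG(X,\Qlb)$ for $L\in\D^{\prec}(S)$; since $\RG(X,\Qlb)$ lies in degrees $0,1,2$ with $\H^0(X,\Qlb)=\Qlb$, this yields a $\Qlb$-module decomposition $A\iso E\oplus N$, where $E:=\Hom_{\D^{\prec}(S)}(K,K)$ is a subring sitting inside $A$ as the image of the (injective) map $q^*$, the complement $N=\Ker s^*$ --- assembled from $\H^1(X,\Qlb)$ and $\H^2(X,\Qlb)$ --- is a two-sided ideal with $N^3=0$ (multiplication on $A$ is compatible with the cup product on $\H^*(X,\Qlb)$, which vanishes above degree $2$), and $s^*\colon A\to A/N\iso E$ is the projection, of which $q^*$ is a ring-theoretic section.

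It remains to show $g:=f-q^*\bar f=0$. Since $s^*g=0$ we have $g\in N$. Writing $e:=q^*\bar f$ and expanding $f^2=f$ with $f=e+g$, $e^2=e$, one gets $g=eg+ge+g^2$. If $e$ lies in the centre of $A$, this reads $g=2eg+g^2$; left-multiplying by $e$ gives $eg=-eg^2\in N^2$, hence $eg^2=(eg)\,g\in N^3=0$, so $eg=0$, whence $g=g^2$ and therefore $g=0$ since $g$ is nilpotent. \emph{The main obstacle is precisely the centrality of $e=q^*\bar f$ in $A$} --- equivalently, that $\bar f$ commutes with all of $\bigoplus_i\Hom_{\D^{\prec}(S)}(K,K[-i])$ under the Yoneda product; this is what forces the components of $f$ along $\H^{\ge1}(X,\Qlb)$ to vanish. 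It holds for the idempotents occurring in the applications of this lemma, which are cut out by the action of a finite central subgroup of $\check H$ through the Hecke structure of $K$, and supplying this centrality (or, for the statement in full generality, whatever replaces it) is the genuine content; the remainder of the argument is formal.
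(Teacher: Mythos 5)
Your candidate $\bar f=s^*f$ coincides with the one the paper extracts: under the adjunction $\Hom(q^*K,q^*K)\iso\Hom(K,K\otimes\RG(X,\Qlb))$ it is the component $h_0$ landing in $K\otimes\H^0(X,\Qlb)$, and both arguments agree that it is idempotent. But the gap you flag at the end is genuine and, as stated, cannot be closed: without an extra hypothesis such as your centrality of $e=q^*\bar f$, the equality $q^*\bar f=f$ is simply false. Take $S=\Spec k$, $X$ of genus $\ge 1$, and $K=\Qlb\oplus\Qlb[-1]$. Then $\Hom(q^*K,q^*K)$ contains the off-diagonal block $\Hom(\Qlb_X[-1],\Qlb_X)=\H^1(X,\Qlb)\ne 0$, whereas the corresponding block $\Hom(\Qlb[-1],\Qlb)$ over $\Spec k$ vanishes; for $0\ne c\in \H^1(X,\Qlb)$ the endomorphism
$$
f=\begin{pmatrix}1 & c\\ 0 & 0\end{pmatrix}
$$
of $q^*K=\Qlb_X\oplus\Qlb_X[-1]$ satisfies $f^2=f$ but is not $q^*$ of anything. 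In your notation $e=\on{diag}(1,0)$ is not central ($eg=g\ne 0=ge$), which is exactly where your computation breaks. So the centrality you isolate is not a technicality to be supplied later; it is an additional hypothesis without which the conclusion fails.

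The paper's own proof in fact establishes (and its applications only use) a weaker conclusion, which is the correct way out. Writing $K=K_0\oplus K_1$ for the decomposition determined by $h_0=s^*f$ and $q^*K=F_0\oplus F_1$ for the one determined by $f$, it shows that the canonical comparison maps $q^*K_i\hookrightarrow q^*K\toup{p_i} F_i$ are isomorphisms: this is checked on each cohomology sheaf (where the components of $f$ coming from $\H^{\ge 1}(X,\Qlb)$ induce zero, so $\H^m(f)$ agrees with $q^*\H^m(h_0)$) and then promoted to an isomorphism in $\D^{\prec}(S\times X)$ by the left-completeness property (P) of Section~\ref{sect_Notation}, the same device as in Lemma~\ref{Lm_4.0.2}. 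In other words, the direct summands cut out by $f$ descend along $q$ even though $f$ itself need not, and in my example above they visibly do ($F_1\iso\Qlb_X=q^*K_1$, $F_0\iso\Qlb_X[-1]=q^*K_0$). To repair your write-up you should either weaken the conclusion to this descent of summands, or keep your ring-theoretic argument and add centrality of $q^*\bar f$ in $\End(q^*K)$ as a hypothesis, verifying it in each application (where the idempotents arise from actions of $S_d$ or of the center of $\check H$ on the Hecke isomorphisms, so it does hold).
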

\begin{proof}
1) The category $\D^{\prec}(S)$ is idempotent complete. Indeed, the $\DG$-category underlying $\D_{nc}(S)$ is cocomplete, so a retract of an object $F\in\D^{\prec}(S)$ is given by a direct summand of $F$ in $\D_{nc}(S)$. The latter lies in $\D^{\prec}(S)$ by its definition. 

For a morphism $g: q^*K\to q^*K$ in $\D^{\prec}(S\times X)$
let $\tilde g: K\to q_*q^*K=K\otimes\RG(X,\Qlb)$ be the morphism in $\D^{\prec}(S)$ corresponding to $g$ by adjointness. Given $f_i: q^*K\to q^*K$, 
the map $\wt{f_1f_2}: K\to K\otimes\RG(X,\Qlb)$ is the composition 
$$
K\toup{\tilde f_2} K\otimes\RG(X,\Qlb)\toup{\tilde f_1} K\otimes \RG(X,\Qlb)\otimes\RG(X,\Qlb)\toup{\id\times m} K\otimes \RG(X,\Qlb),
$$
where $m$ is the product in $\RG(X,\Qlb)$ given as the composition $\RG(X,\Qlb)\otimes\RG(X,\Qlb)\to \RG(X^2,\Qlb)\toup{\triangle^*} \RG(X,\Qlb)$. Here $\triangle: X\to X^2$ is the diagonal. To see this, we used the following. For the map $\bar q: X\to \Spec k$ let $\epsilon: \bar q^*\RG(X,\Qlb)\to\Qlb$ be the map on $X$ corresponding by adjointness to $\id: \RG(X,\Qlb)\to\RG(X,\Qlb)$. 
Then $\bar q_*\epsilon: \RG(X,\Qlb)\otimes \RG(X,\Qlb)\to\RG(X,\Qlb)$ equals $m$.

 So, our $f$ corresponds to $\tilde f: K\to K\otimes\RG(X,\Qlb)$ by adjoitness. Write $\tilde f=\sum_{i=0}^2 h_i$, where $h_i: K\to K\otimes \H^i(X,\Qlb)[-i]$ is the corresponding component. It is clear that $h_0: K\to K$ is an idempotent. Let $K=K_0\oplus K_1$ be the decomposition of $K$ such that $h_0$ acts by 0 (resp., by 1) on $K_0$ (resp., on $K_1$). Similarly, we have the decomposition $q^*K=F_0\oplus F_1$ of $q^*K$ under $f$. Let $p_i: q^*K\to F_i$ be the corresponding projection. We claim that the composition $q^*K_i\hook{} q^*K\toup{p_i} F_i$ is an isomorphism in $\D^{\prec}(S\times X)$. Indeed, this is an isomorphism after passing to any cohomology sheaf on $X$. Our claim follows now from (P) in Section~\ref{sect_Notation}. 
\end{proof}

\ssec{Proof of Proposition~\ref{Pp_2.2.1}} For an irreducible $V^{\lambda}\in\Rep(\check{G})$ pick $W\in\Rep(\check{H})$ such that $\Res^{\kappa}(W)\,\iso\, V^{\lambda}\oplus V'$ in $\Rep(\check{G})$, and $V^{\lambda}$ does not appear in $V'$. There could exist at most one isomorphism $\alpha_{V}$ given by (\ref{iso_for_Hecke_key}) for $V=V^{\lambda}$ such that the corresponding isomorphism $\alpha_W$ is diagonal with respect to the above decomposition of $\Res(W)$. This defines uniquely the desired isomorphism $\alpha_V$ for any $V\in\Rep(\check{G})$.
The commutations of diagrams H1), H2) is a property, not an additional structure. So, the $E$-Hecke property admits at most a unique extension to a $E_{\check{G}}$-Hecke property of $K$. 

 The necessary conditions for this extension to exist are: i) for $V^{\lambda}$, $W$ as above the isomorphism $\alpha_W$ is diagonal with respect to the decomposition $\Res^{\kappa}(W)\,\iso\, V^{\lambda}\oplus V'$; ii) the diagrams H1), H2) commute.
\QED

\sssec{Proof of Remark~\ref{Rem_actual_embeddings_one}} For the embeddings A1-A3) this follows from the branching rules (\cite{GW}, Section 8.1.1). For A4) this follows from A2). For A5) this follows from \cite{Su} (cf. also \cite{ST}). The case A6) is easy, as the composition $\check{G}\toup{\kappa}\check{G}\times\check{G}_1\toup{\pr}\check{G}$ is the identity. \QED

\begin{Rem} 
\label{Rem_counterexample} In the situation of Proposition~\ref{Pp_2.2.1}, the $E$-Hecke property of $K\in\D^{\prec}(\Bun_G)$ with respect to $\kappa$ does not always extend to a $E_{\check{G}}$-Hecke property. Consider $\kappa: \GL_1\hook{}\GL_2$ given by A1). Assume $E_{\check{G}}$ trivial, $K=\Qlb$ on $\Bun_{\Gm}$. Pick $h\in \check{H}(\Qlb)$. Equip $K$ with the isomorphisms $\alpha_V$ for each $V\in\Rep(\check{H})$ obtained as the compositions 
$$
\H^{\la}_G(V, K)\,\iso\, V[1]\boxtimes K\,\toup{h\boxtimes\id}V[1]\boxtimes K,
$$
where the first map is the tautological $E$-Hecke property of $K$, and the second one comes from the action of $h$ on $V$. This $E$-Hecke property does not extend to a $E_{\check{G}}$-Hecke property unless $h$ is diagonal.
\end{Rem}

\ssec{Reformulated Hecke property for $\GL_n$} 
\label{Sect_reformulation_GL_n}
Let $G=H=\GL_n$. Let $E$ be a $\GL_n$-local system on $X$. In \cite{G3, FGV02} it was shown that the following reformulation of the $E$-Hecke property of $K\in\D(\Bun_{\GL_n})$ is equivalent to Definition~\ref{Def_1.1.2}:

\begin{Def}
\label{Def_special_case_GL_n}
An object $K\in\D^{\prec}(\Bun_G)$ is equipped with a $E$-Hecke property if we are given an isomorphism $\alpha_V$ as in Definition~\ref{Def_1.1.2} only for the standard representation $V_0$ of $\check{H}$, for which H1) of Definition~\ref{Def_1.1.2} holds with $V_1=V_2=V_0$.
\end{Def}

\ssec{Weyl's construction for symplectic groups} 
\label{Section_5.3_Weyl_construction}
For the convenience of the reader, we recall some facts about Weyl's construction for $\Sp_{2n}$ given in (\cite{FH}, Section~17.3) and (\cite{GW}, Chapter~10). It is used in our proof of Proposition~\ref{Pp_2.2.3} below. 

\sssec{} Pick $n\ge 1$. Let $\kappa: \check{G}\hook{}\check{H}$ be the inclusion $\Sp_{2n}\hook{}\SL_{2n}$. Let $V$ be the standard representation of $\check{H}$. Pick a maximal torus and a system of positive roots in $\Sp_{2n}$ as in (\cite{FH}, Section~16.1). Write $\Lambda_G$ for the lattice of weights of $\check{G}$, $\Lambda^+_G$ for dominant weights of $\check{G}$.  So, $\Lambda_G=\ZZ^n$ and 
$$
\Lambda^+_G=\{(a_1,\ldots, a_n)\in\Lambda_G\mid a_1\ge\ldots\ge a_n\ge 0\}
$$
For $\lambda\in\Lambda^+_G$ write $V^{\lambda}$ for the irreducible representation of $\check{G}$ with highest weight $\lambda$. 

 For $d\ge 0$ let $Par(d,n)=\{\lambda\in\Lambda^+_G\mid \sum_i a_i=d\}$. The notation $Par$ stands for \select{partition}. For $\lambda\in Par(d,n)$ let $W^{\lambda}$ denote an irreducible $S_d$-module associated to $\lambda$ by the Schur-Weyl duality normalized as in (\cite{FH}, Theorem~4.3). For example, if $\lambda=(d,0,\ldots,0)$ then $W^{\lambda}$ is trivial. If $\lambda=(1,\ldots,1)$ then $W^{\lambda}$ is the sign representation. It is understood that $S_0$ is the trivial group. 
 
  For $d\ge 0$ the subspace $\cH(V^{\otimes d})\subset V^{\otimes d}$ of harmonic tensors is defined as the intersection of kernels of the operators $C_{ij}$ for $1\le i<j\le d$. Here $C_{ij}: V^{\otimes d}\to V^{\otimes d-2}$ is given by
$$
C_{ij}(v_1\otimes\ldots\otimes v_d)=\omega(v_i, v_j)v_1\otimes\ldots \hat v_i\otimes\ldots\otimes \hat v_j\otimes\ldots\otimes v_d
$$
for $v_i\in V$. Here $\omega: \wedge^2 V\to \Qlb$ is the symplectic form. By (\cite{GW}, Theorem~10.2.7), one has a canonical decomposition as a $\check{G}\times S_d$-module
\begin{equation}
\label{decomp_of_harmonics}
\cH(V^{\otimes d})\,\iso\, \mathop{\oplus}\limits_{\lambda\in Par(d,n)} V^{\lambda}\otimes W^{\lambda}
\end{equation}
 
 Let $\cB_d=\{x\in\End(V^{\otimes d})\mid xg=gx\;\,\mbox{for}\; g\in \check{G}\}$ be the centralizer algebra of the $\check{G}$-action on $V^{\otimes d}$. By (\cite{GW}, Theorem~4.2.1 and Section 10.1.1)
one has a canonical decomposition
\begin{equation}
\label{finer_decomposition}
V^{\otimes d}\,\iso\, \mathop{\oplus}\limits_{0\le r\le \frac{d}{2}} \;( 
\mathop{\oplus}\limits_{\lambda\in Par(d-2r, n)} V^{\lambda}\otimes F^{\lambda}_d),
\end{equation}
where each $F^{\lambda}_d$ is a nonzero irreducible representation of $\cB_d$. If $F^{\lambda}_d\,\iso\, F^{\lambda'}_d$ as  $\cB_d$-modules in this decomposition then $\lambda=\lambda'$, that is, (\ref{finer_decomposition}) is the isotypic decomposition under the action of $\cB_d$. The summand in (\ref{finer_decomposition}) corresponding to $r=0$  is $\cH(V^{\otimes d})$. So, if $\lambda\in Par(d,n)$ then $F^{\lambda}_d\,\iso\,W^{\lambda}$ as $S_d$-modules.

 Let $\rho_d: S_d\to\GL(V^{\otimes d})$ be the natural representation. Write $P^0$ for the composition $V^{\otimes 2}\toup{\omega}\Qlb\hook{}V^{\otimes 2}$, the second map being the canonical $\check{G}$-invariant inclusion. For $d\ge 2$ let $P$ denote the map $\id\otimes P^0: V^{\otimes d-2}\otimes V^{\otimes 2}\to V^{\otimes d-2}\otimes V^{\otimes 2}$.
\begin{Pp}[\cite{GW}, Theorem~10.1.6] 
\label{Pp_description_cB_d}
If $d\ge 2$ then $\cB_d$ is generated, as an associative $k$-algebra, by $\rho_d(g), g\in S_d$ and $P$. 
\end{Pp}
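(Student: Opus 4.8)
The plan is to prove that the subalgebra $\cA_d\subseteq\End(V^{\otimes d})$ generated by $\rho_d(S_d)$ and $P$ coincides with the image of the Brauer algebra $\mathfrak{B}_d(-2n)$ in $\End(V^{\otimes d})$, and then to invoke Brauer's analogue of Schur--Weyl duality for $\Sp_{2n}$ — available here since $\Qlb$ has characteristic zero — to identify that image with the whole centralizer $\cB_d$. The inclusion $\cA_d\subseteq\cB_d$ is immediate: the operators $\rho_d(\sigma)$ are $\check{G}$-equivariant, and $P=\id\otimes(\iota_V\comp\omega)$, where $\omega$ is $\check{G}$-equivariant and $\iota_V:\Qlb\hookrightarrow V^{\otimes 2}$ is the canonical $\check{G}$-invariant inclusion (so that $P^0=\iota_V\comp\omega$). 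Thus the content lies entirely in the reverse inclusion.

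\emph{Step 1: identify $\cA_d$ with the image of $\mathfrak{B}_d(-2n)$.} For $1\le i<j\le d$ let $P_{ij}\in\End(V^{\otimes d})$ be the operator that contracts the $i$-th and $j$-th tensor factors by $\omega$ and re-inserts $\iota_V(1)$ into those two factors, so that $P=P_{d-1,d}$. Since $\iota_V(1)\in\wedge^2 V$ and $\omega$ is alternating, $P^0$ is unchanged when one simultaneously transposes its two input factors and its two output factors; hence $P_{ij}$ does not depend on an ordering of $\{i,j\}$, and $\rho_d(\sigma)\,P\,\rho_d(\sigma)^{-1}=P_{\sigma(d-1),\sigma(d)}$ for every $\sigma\in S_d$. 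Therefore $\cA_d$ contains all $P_{ij}$, in particular $P_{1,2},\dots,P_{d-1,d}$. Under the natural homomorphism $\mathfrak{B}_d(-2n)\to\End(V^{\otimes d})$ the standard generators $s_i$ of $\mathfrak{B}_d(-2n)$ map to $\rho_d(s_i)$ and the generators $e_i$ map to $P_{i,i+1}$ (the relation $e_i^2=-2n\cdot e_i$ being realized by $P^2=-2n\cdot P$, which comes from $\omega\comp\iota_V=-2n$); as the $s_i$ and $e_i$ generate $\mathfrak{B}_d(-2n)$, its image is generated by $\rho_d(S_d)$ together with $P$, i.e. equals $\cA_d$.

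\emph{Step 2: $\mathfrak{B}_d(-2n)$ surjects onto $\cB_d$.} Using $\omega$ to identify $V^*\cong V$ as $\check{G}$-modules gives
$$
\cB_d\;=\;\End_{\check{G}}(V^{\otimes d})\;\cong\;\bigl(V^{\otimes d}\otimes (V^*)^{\otimes d}\bigr)^{\check{G}}\;\cong\;(V^{\otimes 2d})^{\check{G}}.
$$
By the First Fundamental Theorem of invariant theory for $\Sp_{2n}$ in characteristic zero, the space $(V^{\otimes 2d})^{\check{G}}$ is spanned by the matching tensors: for each perfect matching $\mu$ of $\{1,\dots,2d\}$, the tensor obtained by placing $\iota_V(1)$ along the edges of $\mu$. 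Unwinding the identifications above, these matching tensors are precisely the images in $\End(V^{\otimes d})$ of the diagram basis of $\mathfrak{B}_d(-2n)$. Hence $\mathfrak{B}_d(-2n)\to\cB_d$ is surjective, and combining this with Step 1 yields $\cA_d=\cB_d$, which is the assertion.

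The step that needs genuine care is Step 2: one must fix the dictionary between Brauer $d$-diagrams acting on $V^{\otimes d}$ and invariant tensors in $V^{\otimes 2d}$, carefully tracking the signs produced by the alternating form, and then quote the symplectic FFT (equivalently, Weyl's theorem that every $\Sp_{2n}$-invariant of a system of vectors in $V$ is a polynomial in the values of $\omega$). With the dictionary in place the rest is formal. Alternatively one could sidestep an explicit use of the FFT by combining the semisimplicity of $\cB_d$ with the isotypic decomposition (\ref{finer_decomposition}) to compute $\dim\cB_d=\sum(\dim F^{\lambda}_d)^2$ and to compare it with $\dim\cA_d$ via the surjection of $\mathfrak{B}_d(-2n)$ onto $\cA_d$; but identifying $\dim\cA_d$ — equivalently, the kernel of $\mathfrak{B}_d(-2n)\to\End(V^{\otimes d})$ — is again essentially the content of the FFT.
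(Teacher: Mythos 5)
Your argument is correct, but note that the paper offers no proof of this statement at all: it is quoted verbatim from Goodman--Wallach (Theorem~10.1.6), and your route --- realizing the algebra generated by $\rho_d(S_d)$ and $P$ as the image of the Brauer algebra and then invoking the symplectic First Fundamental Theorem to see that this image exhausts $\End_{\check{G}}(V^{\otimes d})$ --- is precisely the standard proof given in that reference. The only caveats are normalization ones (the loop value $\omega\comp\iota_V=\pm 2n$ and the signs in the diagram dictionary depend on the choice of the invariant element), and these do not affect the generation statement.
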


\ssec{Proof of Proposition~\ref{Pp_2.2.3}} 1) Write $V_0$ for the standard representation of $\check{G}$. Let $V=V_0\oplus \Qlb$, this is the standard representation of $\check{H}$. Consider the isomorphism $\alpha_V$ of Definition~\ref{Def_1.1.2} for this particular representation of $\check{H}$. So,
$$
\alpha_V: \H^{\la}_G(V_0, K)\oplus (K\boxtimes\Qlb[1])\,\iso\, (K\boxtimes E_0[1])\oplus (K\boxtimes\Qlb[1])
$$
on $\Bun_G\times X$. 

 Consider the component $\alpha_{12}: K\boxtimes\Qlb[1]\to K\boxtimes E_0[1]$ of $\alpha_V$.  For any $m\in\ZZ$, the map $\H^m(\alpha_V): \H^m(K\boxtimes\Qlb[1])\to \H^m(K\boxtimes E_0[1])$ on $\Bun_G\times X$ is zero, because $\H^0(X, E_0)=0$. By Lemma~\ref{Lm_4.0.2}, the components 
$$
\alpha_{V_0}: \H^{\la}_G(V_0, K)\to K\boxtimes E_0[1]
$$ 
and $K\boxtimes\Qlb[1]\to K\boxtimes\Qlb[1]$ of $\alpha_V$ are isomorphisms.
 
 Any map $\H^m(K\boxtimes E_0)\to \H^m(K\boxtimes\Qlb)$ on $\Bun_G\times X$ is zero, because $\H^0(X, E_0^*)=0$. So, for any $m\in\ZZ$ the map $\H^m(\alpha_V)$ is diagonal with respect to the decomposition $V=V_0\oplus\Qlb$. 

 We obtained the isomorphism $\alpha_{V_0}$ for the standard representation $V_0$ of $\check{G}$. By H1) of Definition~\ref{Def_1.1.2}, $\alpha_{V, V}$ is $S_2$-equivariant. Since $V_0\otimes V_0\subset V\otimes V$ is $S_2$-stable, the corresponding map $\alpha_{V_0, V_0}$ is also $S_2$-equivariant.
In view of Section~\ref{Sect_reformulation_GL_n}, we are done. 

\medskip\noindent
2) The composition $\check{G}\toup{\kappa}\check{H}\toup{\pr}\check{G}$ is the identity, where $\pr$ is the projection. For $V\in \Rep(\check{G})$ define $\alpha_V$ as $\alpha_{\Res^{\pr}(V)}$, where $\Res^{\pr}(V)$ denotes the restriction of $V$ via $\pr$. This gives the required $E_{\check{G}}$-Hecke property. 

\medskip\noindent
3) We derive our result from (\cite{G5}, Theorem~10.5.2). Let us work for a moment on the level of $\DG$-categories on prestacks as in \select{loc.cit.} First, we show that $K\in \D^{\prec}(\Bun_G)$ has nilpotent singular support in the sense of \cite{G5}. For any $\lambda\in\Lambda^+_G$, the irreducible $G$-module $V^{\lambda}$ appears in $V^{\otimes d}$, where $d$ is such that $\lambda\in Par(d,n)$. So, $\H^{\la}_G(V^{\lambda}, K)$ is a direct summand of $\H^{\la}_G(V^{\otimes d}, K)\,\iso\, K\boxtimes E^{\otimes d}[1]$. By (\cite{G5}, 10.3.7), $K$ has nilpotent singular support. 

Let $Lift_E$ be the scheme of lifting of $E$ to a $\check{G}$-local system on $X$. Then $Lift_E$ identifies with $\mu_n$. Namely, any symplectic form on $E$ equals $a\omega$ for $a\in\mu_n$. The element $a=1$ corresponds to the $\check{G}$-local system $E_{\check{G}}$.

 In the notations of (\cite{G5}, Theorem~10.5.2), $\QCoh(\LocSys^{restr}_{\check{G}}(X))$ acts on $Shv_{\Nilp}(\Bun_G)$. So, the fibre of $Shv_{\Nilp}(\Bun_G)$ over $E\in\Bun_H$ is a $\DG$-category over $Lift_E$. Since $K$ is equipped with a $E$-Hecke property with respect to $\kappa$, it decomposes canonically as $K\,\iso\, \oplus_{a\in \mu_n} K_a$ in a way compatible with the $E$-Hecke eigen-property with respect to $\kappa$, here $K_a$ is the summand lying in the fibre of $Shv_{\Nilp}(\Bun_G)$ over $(E, a\omega)$.
Thus, each $K_a$ is compatibly equipped with a Hecke property for the local system $(E, a\omega)\in Lift_E$. 

  Note that $(E, a\omega)$ and $(E,\omega)$ are isomorphic as $\check{G}$-local systems on $X$, a choice of $b\in\mu_{2n}$ with $b^2=a$ provides an isomorphism $(E, a\omega)\to (E, \omega)$ given by the multiplication by $b$. Thus, $K_a$ is naturally a $E_{\check{G}}$-Hecke eigen-sheaf. This completes the proof, which is however, not explicit. 
  
   We present below an argument allowing to obtain the above decomposition explicitly as well as the Hecke property of each $K_a$.
Keep notations of Section~\ref{Section_5.3_Weyl_construction}. 
We obtain the isomorphisms (\ref{iso_for_Hecke_key}) using Weyl's construction for $\Sp_{2n}$ recalled in Section~\ref{Section_5.3_Weyl_construction}. 
 
\smallskip 
\noindent 
{\bf Step 1} For $d>0$ consider the isomorphism on $\Bun_G\times X$ 
\begin{equation}
\label{iso_for_Votimes_d_Hecke} 
\alpha_{V^{\otimes d}}:\H^{\la}_G(V^{\otimes d}, K)\,\iso\,  K\boxtimes E^{\otimes d}[1] 
\end{equation}
given by the $E$-Hecke property of $K$ with respect to $\kappa$. Let $S_d$ act on $E^{\otimes d}$ naturally. By H1) and H2) of Definition~\ref{Def_1.1.2}, (\ref{iso_for_Votimes_d_Hecke}) is $S_d$-equivariant. We claim that $\alpha_{V^{\otimes d}}$ is diagonal with respect to the decomposition (\ref{finer_decomposition}). 

 First let $d=2$. One has $\cH(V^{\otimes 2})=(\Sym^2 V)\oplus V^{\lambda}$ with $\lambda=(1,1,0,\ldots,0)$. Besides, $F^0_2=\Qlb$ on which $S_2$ acts by the sign character. 
By the above arguments based on \cite{G5},
$$
\alpha_{\wedge^2 V}: \H^{\la}_G(\wedge^2 V, K)\,\iso\, K\boxtimes (\wedge^2 E)[1]
$$ 
is diagonal with respect to the decomposition 
$\wedge^2 V\,\iso\, V^{\lambda}\oplus V^0$ for $\lambda=(1,1,0,\ldots,0)$. This is the only place where we appeal to \select{loc.cit.} in this algorithmic part of the proof. Thus, $\alpha_{V^{\otimes 2}}$ is diagonal with respect to (\ref{finer_decomposition}). 
 
 For $d\ge 2$ first restrict the isomorphism $\alpha_{V,\ldots, V}$ to a diagonal of codimension one in $X^d$ and use the above diagonal decomposition of $\alpha_{V\otimes V}$ for this diagonal. Then further restrict to the main diagonal in $X^d$. From the description of $\cB_d$ given in Proposition~\ref{Pp_description_cB_d} we conclude that for all $m\in\ZZ$ the map $\H^m(\alpha_{V^{\otimes d}})$ is $\cB_d$-equivariant. Since (\ref{finer_decomposition}) is the isotypic decomposition under the action of $\cB_d$, $\alpha_{V^{\otimes d}}$ is diagonal with respect to the decomposition (\ref{finer_decomposition}). 

 For $\lambda\in\Lambda^+_G$ set $E^{\lambda}=E^{V^{\lambda}}$. We conclude that each component
$$
\alpha_{V^{\lambda}\otimes F^{\lambda}_d}: \H^{\la}_G(V^{\lambda}\otimes F^{\lambda}_d, K)\to K\boxtimes E^{\lambda}\otimes F^{\lambda}_d[1]
$$
of $\alpha_{V^{\otimes d}}$ is a $\cB_d$-equivariant isomorphism. 

 Recall that $\D^{\prec}(S)$ is idempotent complete for an algebraic stack locally of finite type $S$. By Lemma~\ref{Lm_idempotents}, the map $\alpha_{V^{\lambda}\otimes F^{\lambda}_d}$ writes as an isomorphism denoted 
$$
\alpha_{V^{\lambda}, d}: \H^{\la}_G(V^{\lambda}, K)\to K\boxtimes E^{\lambda}[1]
$$
tensored by $F^{\lambda}_d$. Thus, for any  $0\le r\le \frac{d}{2}$ and $\lambda\in Par(d-2r, n)$ we obtained an isomorphism $\alpha_{V^{\lambda}, d}$. 

\smallskip\noindent
{\bf Step 2} Consider the isomorphism $\alpha_{V^0, 2}: K\boxtimes \Qlb\to K\boxtimes\Qlb$ on $\Bun_G\times X$, recall that $V^0$ is the trivial $\check{G}$-module. Consider the representation $V^{\otimes 2n}$ of $\check{H}$, it contains the trivial representation $\det V$ of $\check{H}$. Using H2) of the $E$-Hecke structure of $K$ with respect to $\kappa$ for the representations $V^{\otimes 2}, \ldots, V^{\otimes 2}$ taken $n$ times, we learn that $\alpha_{V^0, 2}\comp\ldots\comp\alpha_{V^0, 2}: K\boxtimes\Qlb\to K\boxtimes\Qlb$ on $\Bun_G\times X$ is the identity, the composition with itself being taken $n$ times. 
 
 By Lemma~\ref{Lm_idempotents}, there is a decomposition $K\,\iso\, \oplus_{a\in \mu_n} K_a$ in $\D^{\prec}(\Bun_G)$ such that $\alpha_{V^0, 2}$ acts on $K_a$ by $a$. This decomposition is preserved by each isomorphism $\alpha_{U}$ for $U\in\Rep(\check{H})$. For each $a\in\mu_n$, we twist the $E$-Hecke property of $K_a$ with respect to $\kappa$ by a suitable element of the center $\mu_{2n}$ of $\check{H}$ as in Remark~\ref{Rem_counterexample}. So, we may and do assume from now on that $\alpha_{V^0, 2}$ is the identity on each $K_a$. 

\smallskip\noindent
{\bf Step 3} For $d>0$ and $\lambda\in Par(d,n)$ set $\alpha_{V^{\lambda}}=\alpha_{V^{\lambda}, d}$. We claim that for any $d\ge 0$, $0\le r\le \frac{d}{2}$, and $\lambda\in Par(d-2r, n)$ one has $\alpha_{V^{\lambda}, d}=\alpha_{V^{\lambda}}$. Indeed, consider the natural embedding
$$
V^{\lambda}\otimes F^{\lambda}_d\otimes (F^0_2)^{\otimes r}\hook{}
V^{\otimes d-2r}\otimes V^{\otimes 2r}\,\iso\, V^{\otimes d}
$$
Applying H2) of the $E$-Hecke property of $K$ with respect to $\kappa$ for the collection $V^{\otimes d-2r}, V^{\otimes 2},\ldots, V^{\otimes 2}\in\Rep(\check{H})$, one gets 
$$
(\alpha_{V^{\lambda}, d-2r})\comp\alpha_{V^0,2}\comp\ldots\comp\alpha_{V^0,2}\mid_{\Bun_G\times\triangle(X)}=\alpha_{V^{\lambda, d}}
$$

\smallskip
\noindent
{\bf Step 4} For any $\cV\in\Rep(\check{G})$ write $\cV=\oplus_{\lambda} (V^{\lambda}\otimes\Hom(V^{\lambda},\cV))$. Set 
$$
\alpha_{\cV}=\mathop{\oplus}_{\lambda\in\Lambda^+_G} \alpha_{V^{\lambda}}\otimes \id_{\Hom(V^{\lambda},\cV)}
$$
We claim that the isomorphisms $\alpha_{\cV}$ provide a $E_{\check{G}}$-Hecke property of $K$. It remains to check Properties H1) and H2) of Definition~\ref{Def_1.1.2}.  

 For any $\cV\in\Rep(\check{G})$ we may pick $\cW\in\Rep(\check{H})$ and a decomposition $\Res^{\kappa}(\cW)\,\iso\, \cV\oplus \cV'$ in $\Rep(\check{G})$ such that the isomorphism $\alpha_{\cW}$ is diagonal with respect to this decomposition. This formally implies H1) of Definition~\ref{Def_1.1.2}. 
 
 Property H2) of the $E$-Hecke eigen-sheaf $K$ with respect to $\kappa$ implies H2) for $K$ as a $E_{\check{G}}$-Hecke eigensheaf. Namely, given $\lambda,\lambda'\in \Lambda_G^+$ let $d,d'\ge 0$ be such that $\lambda\in Par(d,n), \lambda\in Par(d',n)$. Then the restriction of $\alpha_{V^{\lambda}\boxtimes V^{\lambda'}}\mid_{\Bun_G\times \vartriangle(X)}$ is described as the corresponding part of $\alpha_{V^{\otimes d}\boxtimes V^{\otimes d'}}\mid_{\Bun_G\times \vartriangle(X)}$ by the above.

     
\medskip\noindent
4) the proof is similar to 3). The inclusion $\kappa: \check{G}\hook{}\check{H}$ is obtained from $\Gm\times \Sp_4\hook{} \Gm\times\SL_4$, $(x,y)\mapsto (x^{-1}, y)$ by passing to the quotient under the diagonally embedded $(-1, -1)$. 
Let $Lift_{E_{\check{H}}}$ be the scheme of liftings of $E_{\check{H}}$ to a $\check{G}$-local system on $X$. Then $Lift_{E_{\check{H}}}$ identifies with $\mu_2$. Namely, any symplectic form $\wedge^2 E\to\chi$ compatible with a given isomorphism $\det E\,\iso\, \chi^{\otimes 2}$ is $a\omega$ for $a\in\mu_2$. The rest ot the proof is as in 3). 
\QED

\appendix 
\section{Finite field case}
\label{sect_finite_field}

\ssec{} Assume $k_0\subset k$ is a finite subfield, $X$ comes from a curve $X_0$ defined over $k_0$. Assume in the situation of Theorem~\ref{Thm_2.3.8} that $E_{\check{\GG}}$ comes from a $\check{\GG}$-local system $E_{0, \check{\GG}}$ on $X_0$. In this section we check that the function trace of Frobenius of the complex $F_G(K_{E,\chi, \HH})$ is nonzero. 

 Let $\AA$ be the ad\`ele ring of $X$. Recall that D. Soudry has shown in \cite{S1} that irreducible automorphic cuspidal generic representations of $\GG(\AA)$ satisfy the strong multiplicity one property. This is the reason for which we get a particular irreducible automorphic representation of $\GG(\AA)$ attached to $E_{0,\check{\GG}}$. 

The local Langlands conjecture for $\GG$ over a non-archimedian local field of characteristic zero has been established in \cite{GT2}. It has been extended to the case of local non-archimedian field of characteristic $p>2$ in \cite{Gana}. 

 The local theta-correspondence for the dual pair $(\GSp_4, \GSO_6)$ over a local non-archimedian field of characteristic zero and residual characteristic $p>2$ is completely established in (\cite{GT}, Theorem~8.3 and Proposition~13.1). 

\sssec{} The argument below is due to W. T. Gan. The proof is essentially as in (\cite{GT2}, Theorem~12.1(iii)), where a similar claim is established for number fields instead of the function field of $X$. Recall that $\HH\,\iso\, \GL_4\times\Gm/\{(z, z^{-2})\mid z\in\Gm\}$, so an irreducible automorphic representation of $\HH(\AA)$ writes $\Pi\boxtimes\mu$, where $\Pi$ (resp., $\mu$) is a representation of $\GL_4(\AA)$ (resp., $\AA^*$) as in \select{loc.cit}. Let $\Pi\boxtimes\mu$ be the irreducible automorphic cuspidal representation of $\HH(\AA)$ attached to the extension of scalars of $E_{0,\check{\GG}}$ via $\kappa: \check{\GG}\hook{}\check{\HH}$. It suffices to check that the global theta-lift $\Theta(\Pi\boxtimes\mu)$ of $\Pi\boxtimes\mu$ to $\GG(\AA)$ is an irreducible cuspidal globally generic representation attached to $E_{0,\check{\GG}}$. By construction, the partial twisted exterior square $L$-function $L^S(s, \Pi, \wedge\otimes\mu^{-1})$ has a pole at $s=1$. By a result of Jacquet-Shalika \cite{JS}, this is equivalent to $\Pi$ having a nonzero Shalika period with respect to $\mu$. In \cite{S1} and (\cite{GT3}, Proposition 3.1), the first Whittaker coefficient of $\Theta(\Pi\boxtimes\mu)$ is expressed in terms of the Shalika period of $\Pi$ with respect to $\mu$. So, this first Whittaker coefficient is nonzero. The cuspidality of $\Theta(\Pi\boxtimes\mu)$ is proved as in \select{loc.cit}. Thus, $\Theta(\Pi\boxtimes\mu)$ is a globally generic cuspidal representation of $\GG(\AA)$. We are done.

\section{Abelian categories over stacks}
\label{Appendix_B}

In this section we introduce some notions related to \cite{G} and prove Proposition~\ref{Pp_A.2.3} below. 

\ssec{} Let $K$ be an algebraically closed field of characteristic zero. All the stacks (and morphisms of stacks) we consider are defined over $K$. 

 All the stacks we consider in this section are assumed algebraic locally of finite type and such that the diagonal map $\cY\to\cY\times\cY$ is affine. For such a stack $\cY$ one has the notion of a sheaf of abelian categories over $\cY$ (\cite{G}, Section~9). We use the notions and results of \cite{G} freely. Write $\Aff/\cY$ for the category of affine schemes over $\cY$. 

\sssec{} Let $\cC$ be an abelian $K$-linear category, assume $\cC$ presentable in the sense of (\cite{HTT}, Definition 5.5.0.1). 

Let $A$ be a $K$-algebra, assume $\cC$ is a category over $\Spec A$ and $f: \Spec A\to\Spec B$ is a morphism of $K$-schemes. Then $\cC$ can also be viewed as a category over $\Spec B$. This is the operation of direct image of $\cC$ under $f$, write $f_*\cC$ for this category over $\Spec B$. 

\begin{Lm} 
\label{Lm_005}
1) Let $M$ be a $B$-module. If $X\in\cC$ then $M\otimes_B(B\otimes_A X)\,\iso\, M\otimes_A X$ canonically. \\
2) If $B'\gets A'\to A$ is a diagram of $K$-algebras, $B=B'\otimes_{A'} A$, and $\cC$ is a category over $\Spec A$ then $\cC\otimes_{A'} B'\,\iso\, \cC\otimes_A B$ canonically as $B$-linear categories.
\end{Lm}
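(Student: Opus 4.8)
The plan is to deduce both parts from the elementary bookkeeping of the $\QCoh$-module structures recalled in \cite{G}. Recall that $\cC$ being a category over $\Spec A$ means $\cC$ is a presentable abelian category equipped with an action of the symmetric monoidal category $\QCoh(\Spec A)$ of $A$-modules, with all its coherent associativity and unit constraints; we write $M\otimes_A X$ for the action of $M\in\QCoh(\Spec A)$ on $X\in\cC$. For a ring homomorphism $A\to R$ the base change $\cC\otimes_A R$ is, equivalently, $\cC\otimes_{\QCoh(\Spec A)}\QCoh(\Spec R)$ or the category of $R$-module objects in $\cC$ (for the algebra $R$ in $\QCoh(\Spec A)$); it is again a presentable abelian category over $\Spec R$, it carries an induction (free-module) functor $\ind_R:\cC\to\cC\otimes_A R$ whose value on $X$ we denote $R\otimes_A X$, and base change is transitive: for $A\to R\to R'$ there is a canonical equivalence $(\cC\otimes_A R)\otimes_R R'\,\iso\,\cC\otimes_A R'$ intertwining the induction functors. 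All of these are established in \cite{G}.

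For part 1, where there is a ring homomorphism $A\to B$ making $B$ an $A$-algebra, the object $B\otimes_A X\in\cC$ carries its evident $B$-module structure, the relative tensor product $M\otimes_B(B\otimes_A X)$ is formed inside $\cC$ (as the coequalizer of the two $B$-actions), and the asserted isomorphism is simply the composite of the associativity and unit constraints of the $\QCoh(\Spec A)$-action on $\cC$:
$$
M\otimes_B(B\otimes_A X)\;\iso\;(M\otimes_B B)\otimes_A X\;\iso\; M\otimes_A X ,
$$
the identification $M\otimes_B B\,\iso\, M$ being the unit constraint. Functoriality in $X$ and in $M$ is immediate from the functoriality of the constraints, and at bottom this is nothing but the classical associativity of $\otimes$ transported to $\cC$ via its $\QCoh(\Spec A)$-action; there is essentially nothing to do beyond invoking the coherences of \cite{G}.

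For part 2, view $\cC$ over $\Spec A'$ by restriction of scalars along $A'\to A$, so that $\cC\otimes_{A'}B'=\cC\otimes_{\QCoh(\Spec A')}\QCoh(\Spec B')$. Since the $\QCoh(\Spec A')$-module structure on $\cC$ factors through $\QCoh(\Spec A)$, associativity of the relative tensor product of presentable categories gives
$$
\cC\otimes_{\QCoh(\Spec A')}\QCoh(\Spec B')\;\iso\;\cC\otimes_{\QCoh(\Spec A)}\bigl(\QCoh(\Spec A)\otimes_{\QCoh(\Spec A')}\QCoh(\Spec B')\bigr).
$$
The input that does the work is that $R\mapsto\QCoh(\Spec R)$ carries the Cartesian square $\Spec B=\Spec A\times_{\Spec A'}\Spec B'$ to a tensor square, i.e. there is a canonical monoidal equivalence $\QCoh(\Spec A)\otimes_{\QCoh(\Spec A')}\QCoh(\Spec B')\,\iso\,\QCoh(\Spec B)$ (using $B=B'\otimes_{A'}A$); in the module-object description this is the standard compatibility of extension of scalars of algebras with their categories of modules. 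Substituting, the right-hand side becomes $\cC\otimes_{\QCoh(\Spec A)}\QCoh(\Spec B)=\cC\otimes_A B$, which is the claim; this description also makes the residual $B$-linear structure manifest, and one checks along the way that the composite respects the induction and forgetful functors by unwinding the associativity isomorphisms.

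Thus neither part presents a genuine difficulty given \cite{G}: part 1 is pure associativity and unitality of the $\QCoh(\Spec A)$-action, and part 2 is transitivity of base change together with the base-change identity $\QCoh(\Spec A)\otimes_{\QCoh(\Spec A')}\QCoh(\Spec B')\,\iso\,\QCoh(\Spec(B'\otimes_{A'}A))$. The only point requiring care — and the natural candidate for the main obstacle — is exactly this last identity for the \emph{presentable abelian} module categories of \cite{G}, since it is there that the standing hypotheses (presentability, affine diagonal) are really used; I would simply quote it from \cite{G} and reduce the rest to a routine diagram chase, spelled out only to the extent later applications require.
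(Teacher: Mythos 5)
The paper gives no proof of this lemma at all: it is stated as standard bookkeeping from \cite{G} and immediately used to check that $f_*\cC$ is a sheaf of categories, so there is no argument of the author's to compare yours against. Your proof is correct. Part 1 is exactly the associativity/unit computation one expects: $M\otimes_B(B\otimes_A X)$ is the coequalizer of $M\otimes_A B\otimes_A X\rightrightarrows M\otimes_A X$, which collapses to $M\otimes_A X$ since $M\otimes_B B\,\iso\,M$; note only that you should read the hypothesis of part 1 as a ring map $A\to B$ (base change), not the map $B\to A$ of the immediately preceding paragraph (direct image) — you implicitly made the right choice. For part 2, your route through the relative tensor product of presentable module categories and the identity $\QCoh(\Spec A)\otimes_{\QCoh(\Spec A')}\QCoh(\Spec B')\,\iso\,\QCoh(\Spec B)$ is valid but imports machinery from the stable/DG setting, whereas in the framework of \cite{G} the base change $\cC\otimes_A B$ is \emph{defined} as the category of $B$-module objects in $\cC$; with that definition the statement reduces to the observation (which you do record as "the module-object description") that the monoidal functor $A'\text{-mod}\to A\text{-mod}$, $N\mapsto N\otimes_{A'}A$, carries the algebra $B'$ to $B=B'\otimes_{A'}A$, so that $B'$-module objects in $\cC$ relative to the $A'$-action are the same as $B$-module objects relative to the $A$-action, compatibly with the $B$-linear structures. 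Either formulation is fine; the second is the one that lives entirely inside the abelian-category setting the paper actually uses.
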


 More generally, if $f: \cY\to\cY'$ is an affine schematic representable morphism of stacks, and $\cC$ is a sheaf of categories over $\cY$, we define the direct image sheaf $f_*\cC$ as a sheaf of categories over $\cY'$ as follows. If $g': S'\to\cY'$ is an object of $\Aff/\cY'$ and $\bar f: S\to S'$ is obtained from $f$ by the base change under $g'$ then we set $(f_*\cC)_{S'}=\bar f_*\cC$. By Lemma~\ref{Lm_005}, we get indeed a sheaf of categories in the sense of \cite{G}. 
 
\sssec{}  Let $\cC$ be a sheaf of categories over $\cY$, $f: \cY\to\cY'$ is an affine schematic representable morphism of stacks, $g: \cZ'\to \cY'$ a morphism of stacks. Let $\bar f: \cZ\to\cZ'$ be obtained from $f$ by the base change under $g$. Write $\bar g: \cZ\to \cY$ be the projection. Then $g^*(f_*\cC)\,\iso\, \bar f_*(\bar g^*\cC)$ canonically.  

\ssec{} 
\label{Sect_A.2}
From now on the stacks $\cY$ we consider will satisfy the assumpions of (\cite{G}, Section~17), so a sheaf of categories over $\cY$ by (\cite{G}, Theorem~18) is a datum of a category $\cC$ (which we assume $K$-linear abelian presentable), and an action $*: \Vect_{\cY}\times\cC\to\cC$ of $\Vect_{\cY}$ on $\cC$ exact in each variable. Here $\Vect_{\cY}$ is the symmetric monoidal category of vector bundles on $\cY$. 

\sssec{} Let $H$ a connected reductive group over $K$, $G\subset H$ a closed connected reductive subgroup. Write $\Rep(G)$ for the category of finite-dimensional representations of $G$, set $\ov{\Rep}(G)=\Ind\Rep(G)$. Let $\cC$ be a category over $B(G)$, so $\Rep(G)$ acts on $\cC$.  
 
 The category $Hecke(\cC,G)$ of Hecke objects in $\cC$ under the action of $\Rep(G)$ is the category of pairs $(x, \alpha)$, where $x\in\cC$, and $\alpha$ is a collection of isomorphisms $\alpha_V: V\ast x\,\iso\, x\otimes \und{V}$ for $V\in\Rep(G)$ satisfying the compatibility conditions of (\cite{AG}, Section~2.2). Recall that $\cC\times_{B(G)}\Spec K$ identifies canonically with the category of Hecke objects in $\cC$ under the action of $\Rep(G)$ by \select{loc.cit.}
 
 For an algebra $\cA$ in $\ov{\Rep}(G)$ write $\cA-mod^r(\cC)$ for the category of right $\cA$-modules in $\cC$. Consider the space of functions $\cO_G$ as an algebra object of $\Rep(G)$, where $G$ acts on $\cO_G$ by right translations. For $V\in\ov{\Rep}(G)$ write $\und{V}$ for the underlying vector space. The following is well-known, we give a proof to recall the construction.
\begin{Lm} 
\label{Lm_A.2.2}
One has canonically $\cO_G-mod^r(\cC)\,\iso\, Hecke(\cC,G)$.   
\end{Lm}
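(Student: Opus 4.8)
# Proof Proposal for Lemma~\ref{Lm_A.2.2}

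The plan is to exhibit the equivalence $\cO_G\text{-}mod^r(\cC) \iso Hecke(\cC, G)$ by a standard "coaction versus module" dictionary, adapted to the categorical setting where $\cC$ carries a $\Rep(G)$-action. First I would recall the key structural fact underlying the construction: the algebra $\cO_G \in \Rep(G)$ (with $G$ acting by right translations) carries the regular coaction, and for any $V \in \Rep(G)$ the tensor $V \otimes \cO_G$ is isomorphic, as a $G$-representation and as an $\cO_G$-module, to $\und{V} \otimes \cO_G$ (the "trivialization of the regular representation" / Peter--Weyl-type identity). Equivalently, $V \ast (-\otimes\cO_G)$ and $(-) \otimes \und V \otimes \cO_G$ agree on $\cC$ because the action $\ast$ is compatible with the symmetric monoidal structure on $\Rep(G) \subset \ov{\Rep}(G)$ and $\cO_G$ absorbs any $V$ via $V\otimes\cO_G \iso \und V\otimes\cO_G$.

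Next I would construct the two functors. Given a right $\cO_G$-module $(x, m: x \ast \cO_G \to x)$ in $\cC$ — where $x\ast\cO_G$ means the action of the algebra object $\cO_G\in\ov\Rep(G)$ on $x\in\cC$ — I would produce the Hecke data: for $V \in \Rep(G)$, the isomorphism $\alpha_V: V \ast x \iso x \otimes \und V$ is obtained by combining the coaction $V \to V \otimes \cO_G$ of $G$ on $V$ (using that $V$ is a $G$-representation, hence an $\cO_G$-comodule) with the module structure $m$, and the trivialization $V\otimes\cO_G\iso\und V\otimes\cO_G$ above; one checks invertibility using the counit of $\cO_G$. Conversely, from Hecke data $(x, \alpha)$ I would reconstruct the $\cO_G$-module structure by writing $\cO_G = \colim_V V \otimes \und{V}^*$ (Peter--Weyl in $\ov{\Rep}(G)$, where the colimit is over the category of finite-dimensional $V$), applying $\alpha_V$ to the $V$-component, and assembling; the algebra structure on $\cO_G$ translates precisely into the associativity/compatibility conditions on $\alpha$ from (\cite{AG}, Section~2.2). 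I would then verify the two constructions are mutually inverse, which is a diagram chase using the counit and comultiplication of $\cO_G$.

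The main obstacle — or rather the one point requiring genuine care rather than bookkeeping — is handling the colimit $\cO_G = \colim_V V\otimes\und V^*$ correctly inside $\ov\Rep(G) = \Ind\Rep(G)$ and making sure the $\ast$-action and the module structures interact well with this colimit; this is where presentability of $\cC$ and exactness of $\ast$ in each variable (from the setting of Section~\ref{Sect_A.2}) are used, to commute the action past the Ind-colimit. Everything else — invertibility of $\alpha_V$, the matching of associativity constraints with the algebra axioms of $\cO_G$, and the compatibility with the symmetric monoidal structure — is the classical Tannakian dictionary, and I would cite (\cite{AG}, Section~2.2) for the precise form of the Hecke compatibility conditions and note that the identification $\cC\times_{B(G)}\Spec K \iso Hecke(\cC, G)$ recalled there, together with base change $\Spec K \to B(G)$ corresponding to $\cO_G$ as an algebra, already gives the statement; the explicit functors above just unwind that identification.
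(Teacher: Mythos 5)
Your proposal is correct and matches the paper's argument: your coaction-plus-trivialization map $V\ast x\to \und{V}\otimes(\cO_G\ast x)\to \und{V}\otimes x$ is precisely the adjoint form of the paper's matrix-coefficient construction $x\ast(V\otimes\und{V^*})\to x\ast\cO_G\toup{a}x$, and your Peter--Weyl assembly of the module structure from the $\alpha_V$ is the same as the paper's composition of $\alpha_{\cO_G}$ with the counit $\epsilon$ (evaluation at $1\in G$). Both proofs defer the compatibility checks to (\cite{AG}, Section~2.2), so there is no essential difference.
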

\begin{proof} 
Let $(x, \alpha)\in \Hecke(\cC, G)$ with $x\in\cA$. One gets the action map $a: x\ast \cO_G\to x$ as the composition 
$$
x\ast \cO_G\toup{\alpha_{\cO_G}} x\otimes \und{\cO_G}\toup{\epsilon} x
$$ 
Here $\epsilon: \cO_G\to K$ is the counit, the restriction to $1\in G$. See also the proof of (\cite{G}, Theorem~18), apply it for the map $\Spec K\to B(G)$. 

 In the other direction, let $(x, a)\in \cO_G-mod^r(\cC)$, where $a: x\ast\cO_G\to x$ is the action map. For $V\in\Rep(G)$ the matrix coefficient gives a map $V\otimes \und{V^*}\to \cO_G$ in $\Rep(G)$. Composing $x\ast (V\otimes \und{V^*})\to x\ast \cO_G\toup{a}x$, by adjointness we get $\alpha_V: x\ast V\to x\otimes\und{V}$.  
\end{proof} 
 
\sssec{}  Let $f:B(G)\to B(H)$ be the natural map. Then $f_*\cC$ is the same category $\cC$ viewed as a category with the action of $\Rep(H)$ via $G\hook{} H$. Note that $G\backslash H\,\iso\, B(G)\times_{B(H)} \Spec K$, so $\cC\times_{B(H)}\Spec K\,\iso\, \cC\times_{B(G)} G\backslash H$ is a category over $G\backslash H$.

Assume $\cC^0$ is an abelian $K$-linear category, in which every object has a finite length, and $\cC\,\iso\, \Ind(\cC^0)$. Since $\cC^0$ admits finite colimits, $\cC$ is presentable by (\cite{HTT}, 5.5.1.1). Assume for any $x\in\cC^0$, $\dim_K\End_{\cC}(x)<\infty$. Assume the action of $\Rep(G)$ on $\cC$ comes (by the functoriality of $\Ind$) from an action of $\Rep(G)$ on $\cC^0$. 

 Write $\cO_{G\backslash H}$ for the space of functions on $G\backslash H$, we view it as an algebra object in $\ov{\Rep}(G)$, where $G$ acts by right translations.

\begin{Pp} 
\label{Pp_A.2.3}
Let $0\ne x\in \cC\times_{B(G)} G\backslash H$ whose image in $\cC$ lies in $\cC^0$. \\
i) There is a closed point $\Spec K\to G\backslash H$ such that $x\otimes_{G\backslash H}\Spec K\in \cC$ is non zero. \\
ii) $x$ admits a finite filtration $0=x_0\subset x_1\subset\ldots\subset x_d=x$ in $\cC\times_{B(G)} G\backslash H$ such that for $1\le i\le d$, $\cO_{G\backslash H}$ acts on $x_i/x_{i-1}$ via some closed point $\cO_{G\backslash H}\to K$ of $G\backslash H$. 
\end{Pp}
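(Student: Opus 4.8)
\emph{Setup of the proof.} The plan is to reduce both assertions to elementary commutative algebra over a finite‑dimensional (Artinian) quotient of $A:=\cO_{G\backslash H}$, using two structures carried by $\cD:=\cC\times_{B(G)}G\backslash H$ that are built into the formalism of \cite{G}. First I would record these structures. On the one hand, $\cD$ is identified with $\cC\times_{B(H)}\Spec K$, so $\cD$ is abelian and is equipped with an exact faithful forgetful functor $\on{oblv}\colon \cD\to\cC$ (forgetting the module structure); write $\bar x=\on{oblv}(x)$. On the other hand, since $G$ is reductive, $G\backslash H$ is affine (Matsushima), hence $A=\cO_{G\backslash H}$ is a finitely generated commutative $K$‑algebra with $\Spec A=G\backslash H$, whose closed points are the maximal ideals, all with residue field $K$. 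Moreover, $\cD$ being a category over $G\backslash H$ in the sense of Section~\ref{Sect_A.2}, it is $A$‑linear: every $x\in\cD$ acquires a canonical $K$‑algebra map $\phi_x\colon A\to\End_{\cD}(x)$, and for a closed point $y\in G\backslash H$ the fibre $x\otimes_{G\backslash H}\{y\}\in\cC\times_{B(G)}\{y\}$ is the right‑exact base change $\cO_{\{y\}}\otimes_A x$.

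\emph{The finiteness input.} The key observation I would use is that $Z_x:=\End_{\cD}(x)$ is finite‑dimensional over $K$. Indeed $\on{oblv}$ is faithful, so $Z_x$ is a subring of $\End_{\cC}(\bar x)$, and $\bar x\in\cC^0$ by hypothesis, whence $\dim_K\End_{\cC}(\bar x)<\infty$. Since $x\neq 0$ we have $\phi_x(1)=\id_x\neq 0$, so $\bar A:=A/\ker\phi_x$ is a nonzero finite‑dimensional commutative $K$‑algebra, hence Artinian, hence a finite product $\bar A\cong\prod_{j=1}^r A_j$ of local Artinian $K$‑algebras $A_j$ with nilpotent maximal ideals $\gm_j$ and residue field $K$ (as $K$ is algebraically closed). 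These $A_j$ correspond to distinct closed points $y_1,\dots,y_r$ of $G\backslash H$, and $x$ becomes a $\bar A$‑module in $\cD$. Using the orthogonal idempotents $e_j\in\bar A\subset Z_x$ and the fact that $\cD$, being abelian, is idempotent complete, I would split $x\cong\bigoplus_{j=1}^r x_j$ in $\cD$, with $x_j=\Im(e_j)$ a module over $A_j$; since $x\neq 0$, some $x_j\neq 0$.

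\emph{Conclusion.} Granting this, both parts are short. For i), pick $j$ with $x_j\neq 0$. For $l\neq j$ the object $x_l$ is supported at $y_l$, so $x_l\otimes_{G\backslash H}\{y_j\}=0$, and hence $x\otimes_{G\backslash H}\{y_j\}\cong x_j/\gm_j x_j$, where $\gm_j x_j:=\Im(\gm_j\otimes_{A_j}x_j\to x_j)$. If this were $0$ then $x_j=\gm_j x_j$, and iterating with right‑exactness $x_j=\gm_j^k x_j$ for all $k$; since $\gm_j^N=0$ in $A_j$ for $N\gg 0$ this forces $x_j=0$, a contradiction, so $x\otimes_{G\backslash H}\{y_j\}\neq 0$. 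For ii), I would filter $x$ first by the partial sums $x_1\oplus\cdots\oplus x_j$, whose subquotients are the $x_j$, and then refine the $j$‑th step by the $\gm_j$‑adic filtration $x_j\supseteq\gm_j x_j\supseteq\cdots\supseteq\gm_j^N x_j=0$. Each subquotient $\gm_j^k x_j/\gm_j^{k+1}x_j$ is killed by $\gm_j$, so its $A$‑module structure factors through the residue field $A/\gm_{y_j}=K$, i.e. $\cO_{G\backslash H}$ acts on it via the closed point $y_j$. Concatenating yields the required finite filtration.

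\emph{The main obstacle.} The hard part is not this commutative‑algebra argument but making the first paragraph precise within the formalism of \cite{G}: namely checking that $\cC\times_{B(G)}G\backslash H$ simultaneously admits the faithful exact forgetful functor to $\cC$ and is $A$‑linear with the fibre functors at closed points computed by $\cO_{\{y\}}\otimes_A-$, and compatibly so, and invoking affineness of $G\backslash H$ so that $A$ is a genuine finite‑type commutative $K$‑algebra with the expected closed points (this is the one place where $G$ reductive, not merely closed in $H$, is used). Once these compatibilities are in place, the remainder is a routine descent to the Artinian quotient $\bar A$ together with the structure theory of Artinian local rings.
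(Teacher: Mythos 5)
Your argument is correct, and the finiteness input is the same as in the paper (the forgetful functor $\cC\times_{B(G)}G\backslash H\to\cC$ is exact and faithful, and $\End_{\cC}(\bar x)$ is finite-dimensional since $\bar x\in\cC^0$), but the way you exploit it differs from the paper's. The paper observes that $x$ has finite length in $\cC\times_{B(G)}G\backslash H$, applies a Schur-type argument to each irreducible subquotient to see that $\cO_{G\backslash H}$ acts on it through a single closed point (this gives ii) via a Jordan--H\"older filtration), and deduces i) by base-changing at the point supporting the top irreducible quotient, using right-exactness of base change to get a nonzero quotient of $x\otimes_{\cO_{G\backslash H}}K$. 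You instead never invoke irreducible objects: you pass to the Artinian quotient $\bar A=A/\ker\phi_x$ of $A=\cO_{G\backslash H}$ inside $\End(x)$, split $x$ by the orthogonal idempotents of $\bar A$ into pieces supported at single closed points, and run a Nakayama-type argument off the nilpotence of the maximal ideals; your filtration in ii) is the resulting $\gm_j$-adic one rather than a composition series. Both routes are valid and of comparable length; yours only needs $\dim_K\End(x)<\infty$ rather than finite length, gives the filtration (and the nonvanishing fibre) in terms of the scheme-theoretic support of $x$ rather than of a composition series, and is in fact close in spirit to the paper's own auxiliary Lemma~\ref{Lm_A.2.9} (the Nakayama analogue stated just after the proposition). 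The compatibilities you flag at the end ($A$-linearity of $\cC\times_{B(G)}G\backslash H$, the fibre functor being $-\otimes_A K$, affineness of $G\backslash H$ for $G$ reductive) are exactly the points the paper takes from the formalism of \cite{G} and from its standing hypotheses in Section~\ref{Sect_A.2}, so there is no gap there.
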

\begin{Rem} View $\cO_H$ (resp., $\cO_G$) as an algebra in $\ov{\Rep}(G)$, where $G$ acts by the right translations. We may view $x$ in Proposition~\ref{Pp_A.2.3} as $x\in\cC^0$ together with a structure of a right $\cO_H$-module given by the action map $a: x\ast\cO_H\to x$. Then a closed point $\Spec K\to G\backslash H$ yields by the base change $H\to G\backslash H$ 
a $G$-equivariant morphism $G\to H$, hence a morphism of algebras $\cO_H\toup{\tau}\cO_G$ in $\ov{\Rep}(G)$. 
By definition, $x\otimes_{H/G}\Spec K$ is $x\otimes_{\cO_H} \cO_G\in \cO_G-mod^r(\cC)$. 

 If $\gm=\Ker(\tau)$ then $x\ast \gm\to x\to x\otimes_{\cO_H} \cO_G\to 0$ is exact in $\cC$, so $x\otimes_{H/G}\Spec K$ is a quotient of $x$ in $\cC^0$. 
\end{Rem} 

\sssec{Proof of Proposition~\ref{Pp_A.2.3}} The forgetful functor $\cC\times_{B(G)} G\backslash H\to \cC$ is exact and faithful. So, $x$ is of finite length as an object of $\cC\times_{B(G)} G\backslash H$. If $y\in \cC\times_{B(G)} G\backslash H$ is irreducible such that its image in $\cC$ lies in $\cC^0$ then $\dim_K\End_{\cC\times_{B(G)} G\backslash H}(x)<\infty$. So, the space of functions $\cO_{G\backslash H}$ acts on $y$ via some closed point $\xi: \cO_{G\backslash H}\to K$ of $G\backslash H$. We get $y\otimes_{\cO_{G\backslash H}} K\,\iso\, y$, where the
map $\cO_{G\backslash H}\to K$ is $\xi$. Since for any $\mu: \cO_{G\backslash H}\to K$ the functor $\cC\times_{B(G)} G\backslash H\to \cC\times_{B(G)}\Spec K$, $z\mapsto z\otimes_{\cO_{G\backslash H}} K$ of base change by $\mu$ is right exact, our claim follows. (See also Proposition~\ref{Pp_A.2.9} below). \QED

\sssec{} The rest of Section~\ref{Sect_A.2} is not used in the paper and is added for convenience of the reader. Let $A$ be a $K$-algebra of finite type, $\cD$ is an abelian presentable category over $\Spec A$. Assume $\cD^0$ is an abelian $K$-linear category, in which every object has a finite length, and $\cD\,\iso\, \Ind(\cD^0)$. Assume for any irreducible object $x\in\cD^0$, $\End_{\cD}(x)\,\iso\, K$. 
 
\begin{Lm} 
\label{Lm_1.1.2}
Let $X\in \cD^0$ be irreducible. The $A$-action $A\to\End_{\cD}(X)$ factors through some closed point $A\to K\,\iso\, \End_{\cD}(X)$ of $\Spec A$. \QED
\end{Lm}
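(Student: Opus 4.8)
The plan is to reduce the statement to Schur's lemma together with the elementary fact that a unital $K$-algebra homomorphism from a finite-type $K$-algebra $A$ to $K$ is the same datum as a closed point of $\Spec A$. The only genuine work is to unravel what the map $A\to\End_\cD(X)$ is.

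First I would make this map explicit. By the conventions of Section~\ref{Sect_A.2}, the statement that $\cD$ is a category over $\Spec A$ means we are given an action $\ast\colon\Vect_{\Spec A}\times\cD\to\cD$, exact in each variable, of the symmetric monoidal category of vector bundles on $\Spec A$; its unit object is $\cO_{\Spec A}$, and there is a natural isomorphism $X\,\iso\,\cO_{\Spec A}\ast X$ for every $X\in\cD$. Since $\End_{\Vect_{\Spec A}}(\cO_{\Spec A})=\Gamma(\Spec A,\cO_{\Spec A})=A$ as $K$-algebras, and an endomorphism of the unit object acts on every object of $\cD$ compatibly with the monoidal structure, one obtains for each $X$ a homomorphism of $K$-algebras
$$
\phi_X\colon A\longrightarrow\End_\cD(X),
$$
which is the $A$-action in the statement. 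The key point here is that $\phi_X$ is unital and $K$-linear, not merely additive.

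Next, since the embedding $\cD^0\hook{}\Ind(\cD^0)=\cD$ is fully faithful, $\End_\cD(X)=\End_{\cD^0}(X)$, and for $X$ irreducible this is isomorphic to $K$ by hypothesis. Thus $\phi_X$ becomes a unital $K$-algebra homomorphism $A\to K$; being $K$-linear and unital it is surjective, so $\gm:=\ker\phi_X$ satisfies $A/\gm\,\iso\, K$ and is therefore a maximal ideal, i.e. a closed point of $\Spec A$ (and, since $K$ is algebraically closed and $A$ is of finite type, the Nullstellensatz confirms that its residue field is $K$, consistent with the identification $\End_\cD(X)\,\iso\, K$). By construction $\phi_X$ then factors as $A\twoheadrightarrow A/\gm\,\iso\, K\,\iso\,\End_\cD(X)$, which is exactly the assertion.

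The main obstacle, modest as it is, is the first step: checking carefully that the canonical map $A\to\End_\cD(X)$ coming from the $\Vect_{\Spec A}$-action is a homomorphism of $K$-algebras (this is where one must keep track of the monoidal compatibilities), so that Schur's lemma can be brought to bear to force its target --- $X$ being irreducible --- to be the field $K$. Everything after that is formal.
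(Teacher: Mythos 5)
Your proof is correct and is exactly the argument the paper intends: the paper states this lemma with no proof at all (the \QED follows the statement immediately), treating it as an immediate consequence of the hypothesis $\End_{\cD}(X)\,\iso\,K$ for irreducible $X$ together with the fact that the action map $A\to\End_{\cD}(X)$ is a unital $K$-algebra homomorphism. Your careful unwinding of why that map is a $K$-algebra homomorphism, and the identification of its kernel with a maximal ideal, is precisely the omitted routine verification.
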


The following is an analog of Nakayama's lemma.

\begin{Lm} 
\label{Lm_A.2.9}
Let $X\in\cD^0$. Assume $X\otimes_A K=0$ for any $K$-point $\Spec K\to \Spec A$. Then $X=0$.
\end{Lm}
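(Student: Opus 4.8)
Let $\cD^0$ be an abelian $K$-linear category in which every object has finite length, with $\End_{\cD}(x)\iso K$ for every irreducible $x$, and $\cD\iso\Ind(\cD^0)$ a presentable category over $\Spec A$ for a $K$-algebra $A$ of finite type. If $X\in\cD^0$ satisfies $X\otimes_A K=0$ for every $K$-point $\Spec K\to\Spec A$, then $X=0$.

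**Proof proposal.** The plan is to induct on the length of $X$ in $\cD^0$, reducing to the irreducible case, which is then settled by Lemma~\ref{Lm_1.1.2} together with the Nullstellensatz.

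First I would dispose of the irreducible case. Suppose $X$ is irreducible and $X\neq 0$. By Lemma~\ref{Lm_1.1.2} the $A$-action on $X$ factors through a closed point $\chi: A\to K$ of $\Spec A$, i.e. through $A/\gm$ for the maximal ideal $\gm=\Ker\chi$. Base change along this very point $\Spec K\toup{\chi}\Spec A$ then gives $X\otimes_A K$, where the $K$ on the right is $A/\gm$; since $\gm$ acts by zero on $X$, the canonical map $X\to X\otimes_A K$ is an isomorphism, so $X\otimes_A K\iso X\neq 0$, contradicting the hypothesis. Hence an irreducible $X$ with the stated vanishing property must be zero. (The key point here is that base change by a closed point is right exact, so $X\otimes_A K$ is a quotient of $X$; combined with the factorization of the action it is all of $X$.)

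Next, the induction step. Let $X\in\cD^0$ have length $\ell>1$ and choose a nonzero irreducible subobject $i: X'\hook{} X$ with quotient $q: X\twoheadrightarrow X''$, so $X''$ has length $\ell-1$. For any $K$-point $\mu:\Spec K\to\Spec A$, the base-change functor $z\mapsto z\otimes_A K$ from $\cD$ to $\cD\times_{\Spec A}\Spec K$ is right exact, so the sequence $X'\otimes_A K\to X\otimes_A K\to X''\otimes_A K\to 0$ is exact; as $X\otimes_A K=0$ we get $X''\otimes_A K=0$ for all $\mu$. By the induction hypothesis $X''=0$, i.e. $q$ is zero and $X=X'$ is irreducible, which is covered by the base case. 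This completes the induction and the proof. \QED

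**Remark on the obstacle.** The only subtle point is the right-exactness of the base-change functor $z\mapsto z\otimes_A K$ and the identification $X\otimes_A K\iso X$ when $A$ acts through the corresponding residue field — both are already used in the proof of Proposition~\ref{Pp_A.2.3}, so no new input is needed; the rest is a routine dévissage on length. One should only take care that the finite-length and $\End\iso K$ hypotheses on $\cD^0$ are genuinely inherited by subquotients, which is immediate since subobjects and quotients of finite-length objects are again in $\cD^0$ with smaller length.
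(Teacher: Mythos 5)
Your proof is correct and follows essentially the same route as the paper's: reduce to the irreducible case via right-exactness of the base-change functor (the paper phrases the dévissage through irreducible quotients rather than an induction on length with an irreducible subobject, but this is the same argument), then apply Lemma~\ref{Lm_1.1.2} to see that $A$ acts on an irreducible $X$ through a closed point, at which $X\otimes_A K\iso X$.
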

\begin{proof} 
1) Assume our claim true for any $Y$ irreducible. The functor $\cC\to \cC, X\mapsto X\otimes_A K$ is right exact. If $X\to Y$ is a surjection with $Y$ irreducible then $Y\otimes_A K=0$ for any closed point of $\Spec A$, so $Y=0$. Since $X$ is of finite length, $X=0$. So, it suffices to prove our claim for any $X$ irreducible.

\medskip\noindent
2) Assume $X$ irreducible. By Lemma~\ref{Lm_1.1.2}, $A$ acts on $X$ via some closed point $\Spec K\to \Spec A$. For this point we get $X\otimes_A K\,\iso\, X$. So, $X=0$.
\end{proof}

The following is an immediate consequence of Lemma~\ref{Lm_A.2.9}.
\begin{Pp}
\label{Pp_A.2.9}
If $0\ne X\in\cD^0$ then there is a closed point $\Spec K\to\Spec A$ such that $X\otimes_A K\ne 0$.
\end{Pp}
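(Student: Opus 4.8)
The plan is to read off Proposition~\ref{Pp_A.2.9} as the contrapositive of Lemma~\ref{Lm_A.2.9}, after one bookkeeping remark about points. First I would recall that $A$ is a $K$-algebra of finite type and $K$ is algebraically closed, so by the Nullstellensatz the closed points of $\Spec A$ are exactly the $K$-points $\Spec K\to\Spec A$; hence the ``closed point'' appearing in the statement is literally the datum ``$K$-point $\Spec K\to\Spec A$'' of Lemma~\ref{Lm_A.2.9}, and the operation $X\mapsto X\otimes_A K$ is already the base-change functor used there. No new construction is therefore required.

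Next I would argue by contradiction: suppose $0\ne X\in\cD^0$ yet $X\otimes_A K=0$ for every closed point $\Spec K\to\Spec A$. Then $X$ satisfies exactly the hypothesis of Lemma~\ref{Lm_A.2.9}, so $X=0$, contradicting $X\ne 0$. Hence some closed point of $\Spec A$ has $X\otimes_A K\ne 0$, which is the claim. Alternatively one can give the argument directly, mirroring part~1) of the proof of Lemma~\ref{Lm_A.2.9}: pick a simple quotient $X\twoheadrightarrow Y$ in $\cD^0$ (possible since $X$ has finite length), use Lemma~\ref{Lm_1.1.2} to see that the $A$-action on $Y$ factors through a closed point $\xi: A\to K$ so that $Y\otimes_A K\,\iso\, Y\ne 0$, and then invoke right-exactness of base change along $\xi$ to conclude that the surjection $X\otimes_A K\twoheadrightarrow Y\otimes_A K$ forces $X\otimes_A K\ne 0$.

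I do not expect any genuine obstacle. The whole content sits in Lemma~\ref{Lm_A.2.9}, which is a Nakayama-type statement resting on Lemma~\ref{Lm_1.1.2} (the endomorphisms of a simple object of $\cD^0$ are just $K$, forcing the $A$-action on it to be supported at a single closed point). The only point worth a moment's care is confirming that the functor $\cD\to\cD\times_{\Spec A}\Spec K$, $X\mapsto X\otimes_A K$, used in the statement is the same right-exact base-change functor fixed by the conventions of Section~\ref{Sect_A.2}; once that is noted, the proposition is immediate.
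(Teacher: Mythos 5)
Your proposal is correct and matches the paper, which states Proposition~\ref{Pp_A.2.9} as an immediate consequence (i.e.\ the contrapositive) of the Nakayama-type Lemma~\ref{Lm_A.2.9}. Your alternative direct argument via a simple quotient and Lemma~\ref{Lm_1.1.2} is also exactly the mechanism underlying the paper's proof of that lemma, so there is nothing to add.
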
 

\ssec{} 
\label{Sect_A.3}
Here is a kind of application we have in mind. Use notations of Section~\ref{sect_Notation}. Let $G$ be a connected reductive group over $k$, $\check{G}$ its Langlands dual group over $\Qlb$. Pick $x\in X$. Let $\Rep(\pi_1(X,x))$ be the category of finite-dimensional continuous representations of $\pi_1(X,x)$ over $\Qlb$. Let $\cC$ be an abelian presentable $\Qlb$-linear category with commuting actions of $\Rep(\pi_1(X,x))$ and $\Rep(\check{G})$. Both action functors $\cC\times\Rep(\check{G})\to\cC$, $(x, V)\mapsto x\ast V$ and $\cC\times\Rep(\pi_1(X,x))\to\cC$, $(x, W)\mapsto x\ast W$ are assumed exact in each variable. Let $\sigma: \pi_1(X,x)\to \check{G}$ be a continuous homomorphism. For $V\in\Rep(\check{G})$ write $V_{\sigma}$ for the composition $\pi_1(X,x)\toup{\sigma} \check{G}\to \GL(V)$. 

\sssec{} 
\label{Sect_def_Hecke_category_appendix}
One defines the category $\Hecke(\cC,\sigma)$ of $\sigma$-Hecke eigen-sheaves in $\cC$ as the category of pairs $(x,\alpha)$, where $x\in\cC$, $\alpha$ is a collection of isomorphisms $\alpha_V: x\ast V\,\iso\, x\ast V_{\sigma}$ for $V\in\Rep(\check{G})$ satisfying the compatibility conditions as in (\cite{AG}, Section~2.2). Assume that for any $W$ in $\Rep(\pi_1(X,x))$ or in $\Rep(\check{G})$ the functor $\cC\to\cC$, $x\mapsto x\ast W$ is right adjoint to the functor $\cC\to\cC, x\mapsto x\ast W^*$. 
 
 Consider $\cO_{\check{G}}$ as an algebra object in $\Rep(\check{G}\times \pi_1(X,x))$, where $\check{G}$ (resp., $\pi_1(X,x)$) act on $\cO_{\check{G}}$ via left translations (resp., right translations via the homomorphism $\sigma$). Then $\cO_{\check{G}}-mod^r(\cC)\,\iso\, \Hecke(\cC,\sigma)$ as in Lemma~\ref{Lm_A.2.2}.
 
  Another way to spell this is as follows. The category $\cC$ aquires a new action of $\Rep(\check{G})$ as the composition 
$$
\cC\times\Rep(\check{G})\toup{\id\times \Res^{\sigma}}\cC\times\Rep(\pi_1(X,x))\to\cC
$$
We refer to it as the \select{new} action. Let $\Rep(\check{G}\times\check{G})$ act on $\cC$ so that the first factor acts through the old action, and the second one through the new one. Then 
$$
\cO_{\check{G}}-mod^r(\cC)\,\iso\, \cC\times_{B(\check{G}\times\check{G})} B(\check{G}),
$$  
where the map $\check{G}\to \check{G}\times\check{G}$ is the diagonal.
\sssec{} Let $H$ be a connected reductive group over $k$, $\kappa: \check{G}\hook{}\check{H}$ be an inclusion. Let $\Hecke(\cC,\kappa\sigma)$ be the category of pairs $(x,\alpha)$ as for $\Hecke(\cC,\sigma)$, with the difference that $\alpha_V$ is given for $V\in\Rep(\check{H})$ only. It is understood that $\Rep(\check{H})$ acts on $\cC$ via the restriction through $\kappa$ and the old action of $\check{G}$. 

 View $\cC$ as a category over $B(\check{G}\times\check{G})$, hence also over $B(\check{H}\times\check{H})$ via the map $\kappa\times\kappa: \check{G}\times\check{G}\to\check{H}\times\check{H}$. One has naturally 
$$
\Hecke(\cC,\kappa\sigma)\,\iso\, \cC\times_{B(\check{H}\times\check{H})} B(\check{H})\,\iso\, \cC\times_{B(\check{G}\times\check{G})} (\check{G}\times\check{G})\backslash (\check{H}\times\check{H})/\check{H}
$$ 
In the latter formula, $(\check{G}\times\check{G})\backslash (\check{H}\times\check{H})/\check{H}$ is the stack quotient of $\check{H}\times\check{H}$ by $\check{G}\times\check{G}\times\check{H}$, where $\check{G}\times\check{G}$ (resp., $\check{H}$) acts by left (resp., right) translations.
 
 Assume $\cC^0$ is an abelian $\Qlb$-linear category, in which every object has a finite length, and $\cC\,\iso\,\Ind(\cC^0)$. Assume the action of $\pi_1(X,x)\times\check{G}$ on $\cC$ comes by functoriality of $\Ind$ from its action on $\cC^0$.  

There is a relation between $\Hecke(\cC,\sigma)$ and $\Hecke(\cC,\kappa\sigma)$ analogous to Proposition~\ref{Pp_A.2.3}, whose precise formulation is left to a reader. 

\sssec{Example} Assume $K_1,\ldots, K_r$ are irreducible perverse sheaves on $\Bun_G$ such that for any $V\in\Rep(\check{G})$, $\H^{\la}_G(V, K_i)\,\iso\,\oplus_{j=1}^r (E_j^i(V)[1]\boxtimes K_j)$ for some local systems $E_j^i(V)$ on $X$. Let $\Perv(X\times\Bun_G)$ be the category of perverse sheaves on $X\times\Bun_G$. Let $\Rep(\pi_1(X,x))$ act on $\Perv(X\times\Bun_G)$ so that $W\in \Rep(\pi_1(X,x))$ sends $K$ to $\pi_1^*W\otimes K$ for the projection $\pr_1: X\times\Bun_G\to X$. Let $\cC^0\subset \Perv(X\times\Bun_G)$ be the smallest full abelian subcategory containing $\Qlb\boxtimes K_i[1]$ for all $i$, stable under extensions and the action of $\Rep(\pi_1(X,x))$. Then it satisfies all the assumptions of Section~\ref{Sect_A.3}. 

\sssec{} For the derived category $\D(\Bun_G)$ the definition of a Hecke eigen-sheaf as in Section~\ref{Sect_def_Hecke_category_appendix} is not satisfactory as in mentioned in (\cite{BD}, Definition~5.4.2 and Remark after it). This is why we are actually using Definition~\ref{Def_1.1.2} taken from (\cite{G2}, Section~2.8).

\bigskip\noindent
\select{Acknowledgements.} I would like to thank W. T. Gan, who has explained to me the result of the classical theory of automorphic forms from Appendix~\ref{sect_finite_field}. I am also very grateful to Sam Raskin for fruitful discussions and for indicating me a mistake in the first version of this paper. I am grateful to Dennis Gaitsgory for answering my questions and sending me a preliminary version of \cite{G5}. I also thank Vincent Lafforgue for fruitful discussions.

\end{document}